\theoremstyle{plain} \topmargin 0in
\theoremstyle{plain}
\newtheorem{Thm}{Theorem}
\numberwithin{Thm}{section}
\newtheorem{Lem}[Thm]{Lemma}
\newtheorem{Conj}[Thm]{Conjecture}
\newtheorem{Clm}[Thm]{Claim}
\newtheorem{Cor}[Thm]{Corollary}
\newtheorem{Prop}[Thm]{Proposition}
\theoremstyle{definition}
\newtheorem{Rem}[Thm]{Remark}
\newtheorem{Def}[Thm]{Definition}
\newtheorem{Obs}[Thm]{Observation}
\newcommand{\A}{\mathcal A}
\newcommand{\wh}{\widehat}
\newcommand{\C}{\mathcal C}
\newcommand{\AC}{\mathcal AC}
\newcommand{\id}{\mathrm{id}}
\newcommand{\inv}{^{-1}}
\def\v{\vskip.12in}
\begin{document}

\title{Heegaard genus and complexity of fibered knots}

\author{Mustafa Cengiz}

\address{Department of Mathematics \\ Boston College \\ Chestnut Hill, Massachusetts, USA}

\email{mustafa.cengiz@bc.edu}

\begin{abstract}
We prove that if a fibered knot $K$ with genus greater than one in a three-manifold $M$ has a sufficiently complicated monodromy, then $K$ induces a minimal genus Heegaard splitting $P$ that is unique up to isotopy, and small genus Heegaard splittings of $M$ are stabilizations of $P$. We provide a complexity bound in terms of the Heegaard genus of $M$. We also provide global complexity bounds for fibered knots in the three-sphere and lens spaces.
\end{abstract}
\thanks{}

\v \v \v

\maketitle

\setcounter{section}{0}


\section{Introduction}
Throughout this paper, $M$ denotes a closed, connected, orientable three-manifold. A \emph{fibered link} in $M$ is an embedded link $L$ such that there is a fibration $p:(M\setminus L)\to S^1$ with fibers, called \emph{pages}, homeomorphic to the interior of a compact surface $\Sigma$. Identifying $S^1$ with $[0,2\pi]/\sim$, we denote the page $p\inv(\theta)$ by $\Sigma_\theta$, and each page has $L$ as its boundary. The \emph{exterior} of a link $L\subset M$, denoted by $X_L$, is the complement of an open tubular neighborhood $\mathring{N}(L)$ in $M$. When $L$ is a fibered link, the restriction of the fibration map $p$ to $X_L$ is still a fibration with fibers homeomorphic to the compact surface $\Sigma$. When we cut $X_L$ open along a fiber, we get an interval bundle homeomorphic to $\Sigma\times [0,2\pi]$. Hence, $X_L$ is homeomorphic to the mapping torus $M_\phi=\Sigma\times [0,2\pi]/(x,0)\sim (\phi(x),2\pi)$ for some homeomorphism  $\phi:\Sigma\to\Sigma$ such that $\phi|_{\partial\Sigma}=\id$. The homeomorphism $\phi$ is called a \emph{monodromy} of the fibered link $L$.

In this paper, we assume that all fibered links (or knots) have pages homeomorphic to $\Sigma$ with Euler characteristic $\chi(\Sigma)\le -3$. In particular, fibered knots have pages of genus greater than or equal to two. The complexity of a monodromy $\phi$ is measured using the arc-and-curve complex of $\Sigma$, which is defined in the following way. A properly embedded arc in $\Sigma$ is called \emph{inessential} if it cuts off a disk from $\Sigma$. A simple closed curve embedded in $\Sigma$ is called \emph{inessential} if either it is trivial (it bounds a disk in $\Sigma$) or it is peripheral (it cuts off an annulus from $\Sigma$). An arc or a curve properly embedded in $\Sigma$ is called \emph{essential} if it is not inessential. Let $Z\subset \partial \Sigma$ be a collection of points, one in each boundary component of $\Sigma$. The \emph{arc-and-curve complex} of $\Sigma$, denoted by $\AC(\Sigma)$, is the abstract simplicial complex of which vertices are isotopy (rel $Z$) classes of essential arcs and curves in $\Sigma$, and $k$-simplices are $k$-tuples of pairwise disjoint (up to isotopy rel $Z$) essential arcs and curves in $\Sigma$. In particular, if two non-isotopic (rel $Z$) essential arcs or curves are disjoint up to isotopy, then their isotopy classes bound an edge in $\AC(\Sigma)$. For simplicity, we do not distinguish an arc or a curve from its isotopy class in notation. The \emph{distance} between two isotopy classes $\gamma_1,\gamma_2$ of essential arcs or curves in $\Sigma$, denoted by $d_{\AC}(\gamma_1,\gamma_2)$,  is then the minimum number of edges between corresponding vertices in the arc-and-curve complex. For a fibered link $L$ with the monodromy $\phi:\Sigma\to\Sigma$, we define the \emph{complexity} of $\phi$ (or $L$) by
\begin{align*}
d_{\AC}(\phi)=\min\{d_{\AC}(\gamma,\phi(\gamma))\mid \gamma \text{ is a vertex in }\AC(\Sigma)\}.
\end{align*}
One defines the arc complex, denoted by $\A(\Sigma)$, and the curve complex, denoted by $\C(\Sigma)$, similarly. The corresponding complexities $d_\A(\phi)$ and $d_\C(\phi)$ are also defined in a similar fashion. It immediately follows that $d_{\AC}(\phi)\le d_{\A}(\phi)$ and $d_{\AC}(\phi)\le d_{\C}(\phi)$.

Saul Schleimer has the following conjecture regarding the complexity of fibered knots in three-manifolds.
\begin{Conj}[Schleimer \cite{Schleimer}, Thompson \cite{Thomp}] \label{Conjecture}
For any three-manifold $M$, there is a constant $t(M)$ with the following property: if $K \subset M$ is a fibered knot, then the monodromy of $K$ has complexity at most $t(M)$. Moreover, $t(S^3) = 1$.
\end{Conj}
For a given three-manifold $M$, one could aim at defining the complexity bound $t(M)$ in terms of the Heegaard genus of $M$. A \emph{Heegaard surface} (or a \emph{Heegaard splitting}) of $M$ is an embedded, closed, separating surface $P$ in $M$ which bounds a pair of handlebodies $(U,V)$ such that $U\cup V=M$ and $U\cap V=\partial U=\partial V=P$. We may also refer to the triple $(P,U,V)$ as a Heegaard splitting. The \emph{Heegaard genus} of $M$, denoted by $g(M)$, is the minimum genus among all Heegaard splittings of $M$.

Any fibered link $L\subset M$ induces a Heegaard surface $P=(\Sigma_0\cup L \cup \Sigma_{\pi})$ bounding the handlebodies  $U=(p\inv([0,\pi])\cup L)$ and $V=(p\inv([\pi,2\pi])\cup L)$  homeomorphic to $\Sigma\times I$ embedded in $M$. It is easy to see that $g(P)=1-\chi(\Sigma)$.

In favor of Schleimer's conjecture, we prove the following theorem in this paper.

\begin{Thm}\label{MainTheorem}
Let $K\subset M$ be a non-trivial fibered knot with monodromy $\phi$ and pages of genus greater than one.
\begin{enumerate}
	\item If $M\cong S^3$, then $d_\A(\phi)\le 3$.
	\item If $M\cong S^1\times S^2$, then $d_\A(\phi)\le 3$.
	\item If $M$ is a lens space, then $d_\A(\phi)\le 4$.
	\item Let $P\subset M$ be a minimal Heegaard surface with genus $g\ge 2$. If $d_{\AC}(\phi)> 2g+2$, then $P$ is induced by $K$ and it is unique up to isotopy.
	\end{enumerate}
\end{Thm}
The techniques we use in the proof of the main theorem are strong enough to provide the following result.
\begin{Thm} \label{StabilizationTheorem}
Let $K\subset M$ be a fibered knot with monodromy $\phi$ and pages of genus greater than one, and $P\subset M$ a minimal Heegaard splitting with genus $g\ge 2$. If $d_{\AC}(\phi)> 2h+2$ for some integer $h>g$, then any non-minimal Heegaard surface $P'\subset M$ with genus $g'\le h$ is a stabilization of $P$.
\end{Thm}

Scharlemann and Tomova \cite{ScharlemannTomova} proved that a sufficiently complicated Heegaard splitting (in terms of Hempel distance) in a three-manifold $M$ is minimal genus and unique up to isotopy in $M$, which was later proved by Li \cite{Li}, using simpler arguments. Moreover, Bachman and Schleimer \cite{BachmanSchleimer} proved that if a closed fibered three-manifold $M$ has sufficiently complicated monodromy, then the Heegaard splitting induced by the fibration of $M$ is minimal and unique up to isotopy. Our work is a confirmation of the same phenomenon in a different setting, as Part (4) of Theorem \ref{MainTheorem} implies if a fibered knot $K$ has a sufficiently complicated monodromy, then the Heegaard splitting induced by $K$ is minimal and unique up to isotopy. This result was previously announced in the unpublished preprint \cite{Johnson} (see Theorem 1) by Jesse Johnson, where the proof was given using an axiomatic thin position argument and Bachman's index theory \cite{Bachman}. Here we provide a more direct proof based on standard thin position and double sweepout arguments. The reader will see in the upcoming sections that our proof techniques are very similar to the ones in \cite{BachmanSchleimer} and \cite{Li}.

We finish the introduction by pointing out that Part (4) of Theorem \ref{MainTheorem} affirms Conjecture \ref{Conjecture} in a certain case.
\begin{Cor}
Schleimer's conjecture holds for fibered knots which do not induce minimal genus Heegaard splittings.
\end{Cor}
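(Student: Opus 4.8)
The plan is to deduce the corollary directly from Theorem~\ref{mainthm}, treating separately the cases $g(M)\ge 2$ and $g(M)\le 1$.

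In the first case I would proceed as follows. Let $K\subset M$ be a fibered knot with monodromy $\phi$ which does not induce a minimal genus Heegaard splitting, and choose any minimal Heegaard surface $P\subset M$, so that $g:=g(P)=g(M)\ge 2$. Since $P$ cannot be induced by $K$, the conclusion of Theorem~\ref{mainthm}(4) fails for $P$; hence its hypothesis must fail as well, i.e.\ $d_{\AC}(\phi)\le 2g+2=2g(M)+2$. Thus in this regime the constant $t(M):=2g(M)+2$, which depends only on $M$, has the required property.

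For the remaining case I would use the classification of low-genus Heegaard splittings: a closed, connected, orientable three-manifold of Heegaard genus at most one is homeomorphic to $S^3$, to $S^1\times S^2$, or to a lens space. In each of these, parts (1)--(3) of Theorem~\ref{mainthm} give $d_\A(\phi)\le 4$ for every fibered knot $K\subset M$ with pages of genus greater than one, whence $d_{\AC}(\phi)\le d_\A(\phi)\le 4$, and $t(M):=4$ works. (In fact the hypothesis of the corollary is vacuous here, since the Heegaard surface $P=\Sigma_0\cup K\cup\Sigma_\pi$ induced by such a $K$ has genus $1-\chi(\Sigma)\ge 4>g(M)$ and is therefore never minimal.) Taking $t(M):=\max\{4,\,2g(M)+2\}$ then covers all $M$ at once, which is the assertion of the corollary.

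I do not expect a genuine obstacle: all of the substance sits in Theorem~\ref{mainthm}, and the corollary is just the contrapositive of part (4) in the case $g(M)\ge 2$ together with parts (1)--(3) in the case $g(M)\le 1$. The only point requiring a moment's care is the appeal to the classification of genus~$\le 1$ three-manifolds needed to reduce the low-genus situation to the manifolds explicitly handled by Theorem~\ref{mainthm}(1)--(3); this is standard.
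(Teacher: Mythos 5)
The paper gives no explicit proof of this corollary, merely noting that it ``implies'' from Theorem~\ref{mainthm}; your argument correctly supplies the missing details along the only natural route: take the contrapositive of part (4) when $g(M)\ge 2$, and invoke the classification of genus~$\le 1$ three-manifolds together with parts (1)--(3) when $g(M)\le 1$. Your observation that the corollary's hypothesis is automatically satisfied when $g(M)\le 1$ (because the Heegaard splitting induced by $K$ has genus at least $4$) is a nice confirmation that the two cases are consistent, and the resulting $t(M)=\max\{4,2g(M)+2\}$ is exactly the kind of bound the theorem yields.
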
 

\noindent\textbf{Acknowledgements.} I am grateful to my Ph.D. advisor, Tao Li, for his guidance, support, and for numerous discussions. I would also like to thank Saul Schleimer for helpful communications and for providing feedback on an early draft of the paper. Finally, many thanks to the referee for their careful review and valuable suggestions.

\section{Plan of the paper and proofs of the main theorems}

\noindent\textbf{Notation.}  Throught the paper, we denote the interior of any topological space $S$ by $\mathring{S}$. When $S$ is a knot, link, or surface properly embedded in a three-manifold $M$, $\mathring{N}(S)$ (respectively $N(S)$) denotes an open (respectively closed) tubular neighborhood of $S$ in $M$. \v

In this section, we will list the key results that are proved in the upcoming sections and used in the proof of the main theorem. At the end of the section, we will prove Theorems \ref{MainTheorem} and \ref{StabilizationTheorem}, using the listed results. Throughout the paper, we assume the familiarity of the reader with knot theory, surfaces in three-manifolds, handlebodies, and Heegaard splittings. Standard references are \cite{Hatcher}, \cite{Rolfsen}, and \cite{ScharlemannHS}.

\begin{Def}
	A Heegaard splitting $(P,U,V)$ in $M$ is called
\begin{itemize}
	\item \emph{stabilized} if there are essential disks $D\subset U$ and $E\subset V$ such that $D$ and $E$ intersect exactly once in $P$,
	\item \emph{reducible} if there are essential disks $D\subset U$ and $E\subset V$ such that $\partial D=\partial E$ in $P$,
	\item \emph{weakly reducible} if there are essential disks $D\subset U$ and $E\subset V$ such that $\partial D$ and $\partial E$ are disjoint in $P$.
	\item \emph{strongly irreducible} if $\partial D$ and $\partial E$ intersect in $P$ for any essential disks $D\subset U$ and $E\subset V$, i.e., $(P,U,V)$ is not weakly reducible.
\end{itemize}
A sphere $S$ embedded in $M$ is called \emph{essential} if it does not bound a ball in $M$. A positive genus surface $S$ embedded in $M$ is called \emph{essential} (or \emph{incompressible}) if it has no compressing disks, i.e., for any disk $D$ in $M$ such that $D\cap S = \partial D$, there exists a disk $E$ in $S$ such that $\partial E = \partial D$.
\end{Def}
The interaction between Heegaard splittings and essential surfaces in three-manifolds has been well-studied in the three-manifolds literature (for instance, see \cite{CassonGordon}, \cite{Haken}, and \cite{Wald}). We will use these results in various places throughout the paper.

In Section~\ref{EssentialSurfaces}, we prove the following proposition by analyzing the interaction between essential surfaces embedded in $M$ and the pages of a fibered link in $M$. \v

\noindent\textbf{Proposition~\ref{EssentialBound}.}
\emph{Let $L\subset M$ be a fibered link with monodromy $\phi$.
If $M$ contains an essential sphere, then $d_\A(\phi)\le 3$. If $M$ contains a closed incompressible surface of genus $g>0$, then $d_{\AC}(\phi)\le 2g+2$.}\v

The proposition, when combined with some classical theorems on Heegaard splittings, implies the following.\v

\noindent {\bf Theorem~\ref{WeaklyReducibleBound}.}
{\em
Let $L\subset M$ be a fibered link with monodromy $\phi$. If $M$ contains a genus $g\ge 2$ Heegaard surface $P$, which is weakly reducible but not stabilized, then
$$d_{\AC}(\phi)\le\begin{cases}
\quad\ 3 \,\quad,\text{ if $g=2$,}\\
-\chi(P),\text{ if $g\ge 3$}.
\end{cases}$$
In particular, if a minimal genus Heegaard surface $P\subset M$ is weakly reducible, then the given complexity bound holds.
}\v

In Section~\ref{ThinPosition}, we introduce the thin position and double sweepout techniques. These will be useful to achieve a complexity bound for the monodromy of a fibered knot $K$ that cannot be isotoped into a Heegaard surface $P$. In particular, we prove the following.
\v

\noindent\textbf{Theorem~\ref{ThinTheorem}.} \emph{Let $K\subset M$ be a fibered knot with monodromy $\phi$ and pages of genus greater than one. If $P\subset M$ is a Heegaard surface of genus $g$ such that $K$ cannot be isotoped into $P$, then 
	$$d_\A(\phi)\le\begin{cases}
	\ \ \ \ 3\ \ \ \ \ ,\text{ if $g=0$, }\\
	2g+2\,,\text{ if $g\ge 1$.}
	\end{cases}$$}\v

Proposition \ref{EssentialBound} and Theorem \ref{ThinTheorem} suffice to prove Parts (1), (2), and (3) of Theorem \ref{MainTheorem}. Moreover, Theorems \ref{WeaklyReducibleBound} and \ref{ThinTheorem} suffice to prove Part (4) of Theorem \ref{MainTheorem} when a minimal genus Heegaard surface $P$ is weakly reducible or $K$ cannot be isotoped into $P$. So, in Sections \ref{Non-Primitive} and \ref{Primitive}, we analyze the case that a fibered knot $K\subset M$ lies in a strongly irreducible Heegaard surface $P$, which is the only case left for a complete proof of Theorem \ref{MainTheorem}. Namely, we prove the following statement.\v

\noindent\textbf{Theorem~\ref{KinP}.} \emph{Let $K\subset M$ be a fibered knot with monodromy $\phi$ and pages of genus greater than one. If $(P,U,V)$ is a strongly irreducible Heegaard splitting of genus $g\ge 2$ in $M$ such that $K\subset P$, then at least one of the following holds:
\begin{enumerate}
	\item $P$ is isotopic to the Heegaard surface induced by $K$.
	\item $d_{\C}(\phi)\le 2g-2$.
\end{enumerate}}\v 
We prove Theorem~\ref{KinP} in two cases depending on whether $K\subset P$ is primitive in either $U$ or $V$, or non-primitive. Each case requires different tools, whereas some tools that resolve the non-primitive case come in handy in the primitive case as well. Therefore, we devote Section \ref{Non-Primitive} to the proof of Theorem~\ref{KinP} when $K$ is non-primitive, by using standard arguments from the three-manifolds literature. Finally, Section \ref{Primitive} deals with the proof of Theorem~\ref{KinP} when $K$ is a primitive knot, by using double sweepout arguments. 

We finish this section by proving the main theorems, which readily follow from the results stated above.\v

\noindent\emph{Proof of Theorem~\ref{MainTheorem}.} We prove each statement separately.
\begin{enumerate}
	\item The bound for $S^3$ follows from Theorem \ref{ThinTheorem} because a non-trivial fibered knot cannot be isotoped into a Heegaard sphere in $S^3$.
	\item The bound for $S^1\times S^2$ follows from Proposition \ref{EssentialBound} because $S^1\times S^2$ contains an essential sphere.
	\item Let $K$ be a fibered knot in a lens space. If $K$ can be isotoped into a Heegaard torus $T$, then $A=T\setminus \mathring{N}(K)$ is an incompressible annulus that is not $\partial$-parallel in $X_K$. It follows from Lemma \ref{nonzeroslope} (see below) that $d_\A(\phi)\le 1\le 4$. On the other hand, if $K$ cannot be isotoped into $T$, then by Theorem \ref{ThinTheorem}, we obtain $d_\A(\phi)\le 4$ since $T$ has genus 1.
	\item Let $K\subset M$ be a fibered knot with monodromy $\phi$ and pages of genus greater than one, and $P\subset M$ a minimal Heegaard splitting with genus $g\ge 2$.  Assume that $d_{\AC}(\phi)> 2g+2$. By Theorem \ref{WeaklyReducibleBound}, $P$ is strongly irreducible since it is minimal. By Theorem \ref{ThinTheorem}, $K$ can be isotoped into $P$. Finally, by Theorem \ref{KinP}, $P$ is isotopic to the Heegaard surface induced by $K$. Since $P$ is arbitarily chosen, we deduce that $P$ is the unique minimal genus Heegaard surface in $M$ up to isotopy. \qed
\end{enumerate}\vspace{2mm}
	
\noindent\emph{Proof of Theorem~\ref{StabilizationTheorem}.} Let $K\subset M$ be a fibered knot with monodromy $\phi$ and pages of genus greater than one, and $P\subset M$ a minimal Heegaard splitting with genus $g\ge 2$. Assume that $d_{\AC}(\phi)> 2h+2$ for some integer $h>g$. Let $P'$ be a Heegaard splitting in $M$ with genus $g'$ such that $g<g'\le h$. Then
\begin{enumerate}
	\item  Since $d_{\AC}(\phi)>2h+2\ge 2g'+2$, by the contrapositive Theorem \ref{ThinTheorem}, we deduce that $K$ lies in $P'$ up to isotopy. In other words, $K$ and $P'$ satisfy one of the sufficient conditions of Theorem \ref{KinP}.
	\item Since $d_{\AC}(\phi)>2h+2\ge 2g+2$, it follows from Theorem \ref{MainTheorem} that $K$ induces the minimal splitting $P$. Therefore, $K$ does not induce $P'$.  In other words, $K$ and $P'$ do not satisfy necessary condition (1) of Theorem \ref{KinP}.
	\item Since $d_{\AC}(\phi)>2h+2\ge 2g'+2$, $K$ and $P'$ do not satisfy necessary condition (2) of Theorem \ref{KinP}.
\end{enumerate}
Thus, by the contrapositive of Theorem \ref{KinP}, we deduce that $P'$ is weakly reducible. Finally, by the contrapositive of Theorem \ref{WeaklyReducibleBound}, $d_{\AC}(\phi)> 2g'+2$ implies that $P'$ is stabilized. Since this holds for any genus $g'\le h$, we deduce that $P'$ can be destabilized into the minimal genus Heegaard surface $P$.\qed


\section{Essential Surfaces and the Complexity}\label{EssentialSurfaces}
In this section, we argue that the complexity of a monodromy is bounded when $M$ contains an essential surface. Namely, we will prove the following.
\begin{Prop}\label{EssentialBound}
Let $L\subset M$ be a fibered link with monodromy $\phi$.
If $M$ contains an essential sphere, then $d_\A(\phi)\le 3$. If $M$ contains a closed incompressible surface of genus $g>0$, then $d_{\AC}(\phi)\le 2g+2$.
\end{Prop}

We will prove the proposition at the end of this section after introducing some terminology and tools that are used in the proof. However, first we prove an immediate corollary of the proposition combined with Casson-Gordon's theorem (\cite{CassonGordon}, Theorem 3.1) and Waldhausen's theorem (\cite{Wald}, Theorem 3.1).
\begin{Thm}\label{WeaklyReducibleBound}
	Let $L\subset M$ be a fibered link with monodromy $\phi$. If $M$ contains a genus $g\ge 2$ Heegaard surface $P$, which is weakly reducible but not stabilized, then
	$$d_{\AC}(\phi)\le\begin{cases}
	\quad\ 3 \,\quad,\text{ if $g=2$,}\\
	-\chi(P),\text{ if $g\ge 3$}.
	\end{cases}$$
	In particular, if a minimal genus Heegaard surface $P\subset M$ is weakly reducible, then the given complexity bound holds.
\end{Thm}
\begin{proof}
	Let $(P,U,V)$ be a genus $g\ge 2$ Heegaard splitting in $M$ that is weakly reducible but not stabilized. We have two cases.\vspace{1mm}
	
	\noindent\textbf{Case 1.} $P$ is an irreducible splitting: When $P$ is weakly reducible but irreducible, it follows from Theorem 3.1 in \cite{CassonGordon} that $M$ contains an incompressible surface $S$ of positive genus, which is obtained by compressing the Heegaard surface $P$ at least once in both $U$ and $V$. Hence, $S\subset M$ is an incompressible surface such that $0<g(S)\le g-2$. By Proposition \ref{EssentialBound}, we obtain $$d_{\AC}(\phi)\le 2g(S)+2\le 2(g-2)+2=2g-2=-\chi (P).$$
	
	\noindent\textbf{Case 2.} $P$ is a reducible splitting: When $P$ is reducible but not stabilized, it is a corollary of Theorem 3.1 in \cite{Wald} that $M$ contains an essential sphere. Hence, by Proposition \ref{EssentialBound}, we obtain $d_{\AC}(\phi)\le d_{\A}(\phi)\le 3$. When $g\ge 3$, this particularly implies the desired bound $$d_{\AC}(\phi)\le 3< 2g-2=-\chi(P).$$
	
	Finally, the last statement in the theorem then follows from the fact that a minimal genus Heegaard surface is never stabilized.
\end{proof}

Now we start our discussion towards the proof of Proposition \ref{EssentialBound}. Notice that the proposition is stated not only for fibered knots but also for fibered links. For the rest of this section, fix a fibered link $L\subset M$ with a monodromy $\phi$ and pages $\Sigma_\theta$, for $\theta\in [0,2\pi]$. In the proof of the proposition, we take an essential/incompressible surface $S$ that intersects the fibered link $L$ minimally. It follows that $F=S\cap X_L$ is a meridional incompressible and $\partial$-incompressible surface. Following Theorem 4 in \cite{Thurs}, $F$ can be isotoped so that it has only saddle tangencies to the pages of $L$, and its transversal intersection with any page $\Sigma_\theta$ consists of arcs and curves that are essential in $\Sigma_\theta$. One then can obtain the desired complexity bound by analyzing how the isotopy types of the essential arcs and curves in $\Sigma_\theta\cap F$ change as we travel from $\Sigma_0\cap F$ to $\Sigma_{2\pi}\cap F$, similar to the proof of Theorem 3.1 in \cite{BachmanSchleimer}. However, the discussion in this section is constructed in a more general setting (when $F$ is not necessarily incompressible in $X_L$) so that it will be useful in the upcoming sections. Therefore, we will first introduce three subsections dealing with the boundary slopes on $\partial X_L$, tangencies of $F$ to the pages, and intersections of $F$ with the pages.

\begin{Rem} 
An incompressible and $\partial$-incompressible surface, such as $F$ mentioned above, is called \textit{index-zero} in the terminology of Bachman \cite{Bachman}. A complexity bound in this case follows from Lemmas 4 and 17 in \cite{Johnson}, which uses the same counting argument. Our work in this section resembles \cite{Johnson}. However, we strictly diverge from \cite{Johnson} in the following sections.
\end{Rem}

\subsection{Boundary slopes.} \label{boundaryslopes} In the proof of Proposition  \ref{EssentialBound}, surfaces properly embedded in $X_L$ will play an important role. In particular, when $F\subset X_L$ is a properly embedded surface that has boundary, the isotopy classes of $\partial F$ in $\partial X_L$ will be important. 
Therefore, let us first distinguish those isotopy classes.

Let $K$ be a component of a given fibered link $L\subset M$. On the torus component $\partial X_{K}$ of $\partial X_L$, fix simple closed curves $\mu$ and $\lambda$ such that $\mu$ bounds an essential disk in the solid torus $N(K)$, and $\lambda$ lies in the boundary of a page of $L$. Regarding the pair $(\mu, \lambda)$ as homology generators of $H_1(\partial N(K), \mathbb{Z}) \cong \mathbb{Z}^2$, we can express any simple closed curve $\gamma\subset \partial X_K$ as a \emph{sum}, $a\cdot \mu + b\cdot \lambda$, up to isotopy. It follows that any essential simple closed curve $\gamma \subset \partial X_K$ can be expressed as a \emph{slope} in $\mathbb{Q}\cup \{\infty\}$, by simply mapping $\gamma = a\cdot \mu + b\cdot \lambda$ to $a/b$. This association provides the following definitions:

\begin{itemize}
\item The isotopy type of a simple closed curve that bounds a disk in $N(K)$ is called the \emph{infinity slope} or the \emph{meridional slope} in $\partial X_{K}$, and any other isotopy type is called a \emph{non-meridional slope}. A surface that has non-empty meridional boundary components in $\partial X_L$ is called a \emph{meridional} surface.

\item The \emph{zero slope} is the isotopy type of the simple closed curve $\partial X_{K}$ that lies in the boundary of a page of $L$, and any other isotopy type is called a \emph{non-zero slope}. 

\item An isotopy type of a simple closed curve $\gamma\subset \partial X_{K}$ is called \emph{integral} if $\gamma=a\cdot \mu + \lambda$ for some integer $a$. If a simple closed curve $\gamma\subset \partial X_{K}$ cobounds an annulus $A$ with $K$ in $N(K)$, then $\gamma$ can be expressed as the longitude $\lambda$ that twists along the meridian $\mu$ an integer number of times, say $a$. In particular, $\gamma$ can be expressed as $a\cdot \mu + \lambda$, which means that $\gamma$ is an integral slope.
\end{itemize}

\subsection{Tangencies.} In this subsection, we will discuss how to position a properly embedded surface $F\subset X_L$ nicely with respect to the pages of $L$. We first introduce a definition for surfaces with possibly empty boundary components of non-zero slopes.

\begin{Def} A properly emdedded surface $F\subset X_L$ with (possibly empty) boundary components of non-zero slopes in the components of $\partial X_L$ is said to be \emph{regular} in $X_L$ if
\begin{enumerate}
\item The components of $\partial F$ and $\partial \Sigma_\theta$ are transverse in $\partial X_L$ for each $\theta$;
\item $F$ is transverse to each $\Sigma_\theta$ except for finitely many $\theta_1,\ldots,\theta_m\in [0,2\pi]$;
\item $F$ is transverse to each $\Sigma_{\theta_i}$, $i=1,\ldots,m$, except for a single saddle or center tangency.
\end{enumerate}
When $F$ is regular, the pages which are not transverse to $F$ are called \emph{critical} with respect to $F$. A page that is not critical (i.e., transverse to $F$) is called \emph{non-critical}.
\end{Def}

Note that the definition of a regular surface can be extended for surfaces with boundary components of zero slopes, however, we will be mostly dealing with meridional surfaces in this paper. By standard genericity arguments, every properly embedded surface with  $F\subset X_L$ with (possibly empty) boundary components of non-zero slopes can be isotoped to be regular in $X_L$. To prove Proposition \ref{EssentialBound}, we will analyze the tangencies of a regular surface which intersects pages in essential arcs and curves.


\begin{Def}
Let $F\subset X_L$ be a regular surface with (possibly empty) boundary components of non-zero slopes in $\partial X_L$. A saddle tangency of $F$ to a page $\Sigma_\theta$ is called \emph{inessential} if for any $\epsilon>0$ sufficiently small, the component of $F\cap (\Sigma\times [\theta-\epsilon,\theta+\epsilon])$ that contains the tangency has a boundary component that is a trivial simple closed curve in $\Sigma_{\theta\pm \epsilon}$. If a saddle tangency is not inessential, then it is called \emph{essential}.
\end{Def}

\begin{figure}[h]
	\centering
	\def\svgwidth{0.7\linewidth}
\begingroup%
  \makeatletter%
  \providecommand\color[2][]{%
    \errmessage{(Inkscape) Color is used for the text in Inkscape, but the package 'color.sty' is not loaded}%
    \renewcommand\color[2][]{}%
  }%
  \providecommand\transparent[1]{%
    \errmessage{(Inkscape) Transparency is used (non-zero) for the text in Inkscape, but the package 'transparent.sty' is not loaded}%
    \renewcommand\transparent[1]{}%
  }%
  \providecommand\rotatebox[2]{#2}%
  \newcommand*\fsize{\dimexpr\f@size pt\relax}%
  \newcommand*\lineheight[1]{\fontsize{\fsize}{#1\fsize}\selectfont}%
  \ifx\svgwidth\undefined%
    \setlength{\unitlength}{1060.43684306bp}%
    \ifx\svgscale\undefined%
      \relax%
    \else%
      \setlength{\unitlength}{\unitlength * \real{\svgscale}}%
    \fi%
  \else%
    \setlength{\unitlength}{\svgwidth}%
  \fi%
  \global\let\svgwidth\undefined%
  \global\let\svgscale\undefined%
  \makeatother%
  \begin{picture}(1,0.43443867)%
    \lineheight{1}%
    \setlength\tabcolsep{0pt}%
    \put(0,0){\includegraphics[width=\unitlength,page=1]{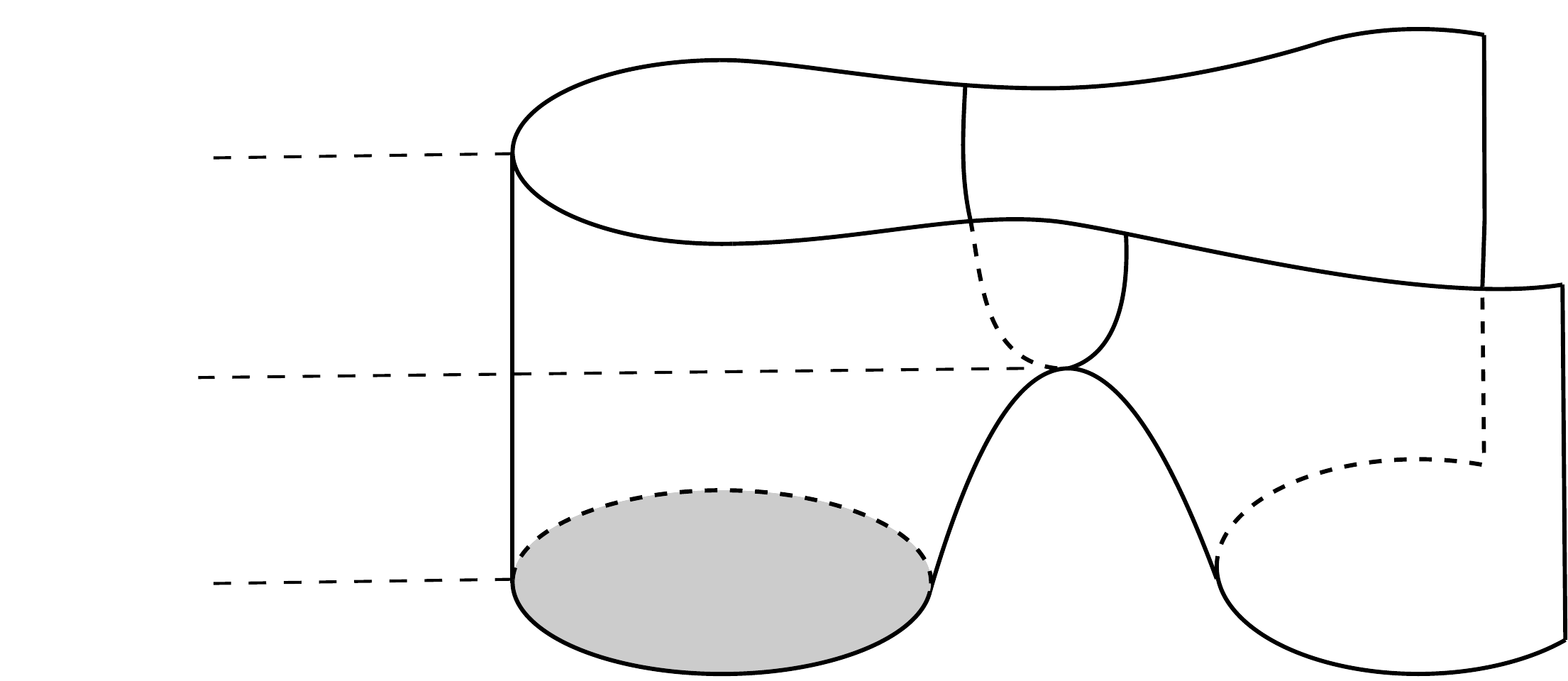}}%
    \put(-0.0021411,0.18911826){\color[rgb]{0,0,0}\makebox(0,0)[lt]{\lineheight{1.25}\smash{\begin{tabular}[t]{l}$\Sigma_\theta$\end{tabular}}}}%
    \put(-0.0020183,0.3289705){\color[rgb]{0,0,0}\makebox(0,0)[lt]{\lineheight{1.25}\smash{\begin{tabular}[t]{l}$\Sigma_{\theta+\epsilon}$\end{tabular}}}}%
    \put(-0.00203321,0.05915458){\color[rgb]{0,0,0}\makebox(0,0)[lt]{\lineheight{1.25}\smash{\begin{tabular}[t]{l}$\Sigma_{\theta-\epsilon}$\end{tabular}}}}%
    \put(0.57987738,0.00498669){\color[rgb]{0,0,0}\makebox(0,0)[lt]{\lineheight{1.25}\smash{\begin{tabular}[t]{l}$C$\end{tabular}}}}%
    \put(0.7763367,0.00724603){\color[rgb]{0,0,0}\makebox(0,0)[lt]{\lineheight{1.25}\smash{\begin{tabular}[t]{l}$\alpha$\end{tabular}}}}%
    \put(0.4301049,0.41266852){\color[rgb]{0,0,0}\makebox(0,0)[lt]{\lineheight{1.25}\smash{\begin{tabular}[t]{l}$\beta$\end{tabular}}}}%
    \put(0.42872577,0.05383874){\color[rgb]{0,0,0}\makebox(0,0)[lt]{\lineheight{1.25}\smash{\begin{tabular}[t]{l}$D$\end{tabular}}}}%
  \end{picture}%
\endgroup%

	\caption{A local picture of an inessential saddle tangency of $F$ to a page $\Sigma_{\theta}$.\label{inessentialsaddle}}
\end{figure}

In Figure \ref{inessentialsaddle}, we depict a local picture of an inessential saddle tangency. Observe that we can project the simple closed curve $C$ and the curves (or arcs) $\alpha$ and $\beta$ in to the page $\Sigma_\theta$  through $F$. After the projections, $\alpha$ and $\beta$ are isotopic in $\Sigma_\theta$ through a copy of the disk $D$ bounded by $C$. In other words, an inessential saddle tangency does not change the isotopy types of essential arcs or curves in $\Sigma_{\theta\pm \epsilon}$. Therefore, in the proof of Proposition \ref{EssentialBound}, inessential saddle tangencies will be negligible, and the number of essential saddles will be important for complexity calculations. We prove the following two lemmas to provide an upper bound for the number of essential saddle tangencies of a regular surface $F \subset X_L$.

\begin{Lem}\label{trivial}
Let $\Sigma_\theta$ be a page of $L$. If $\alpha\subset \Sigma_{\theta}$ is a simple closed curve that bounds a disk in $X_L$, then $\alpha$ bounds a disk in $\Sigma_{\theta}$.
\end{Lem}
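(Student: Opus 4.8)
The plan is to exploit the fibered structure of $X_L$, which forces the pages to be $\pi_1$-injective in the link exterior. Recall that $X_L$ is homeomorphic to the mapping torus $M_\phi=\Sigma\times[0,2\pi]/(x,0)\sim(\phi(x),2\pi)$, and that $p|_{X_L}\colon X_L\to S^1$ is a fibration whose fiber over $\theta$ is (a copy of) the compact page $\Sigma_\theta$. The first step is to record the standard consequence of a surface bundle over the circle: the fibration yields a short exact sequence $1\to\pi_1(\Sigma_\theta)\to\pi_1(X_L)\to\pi_1(S^1)\to1$, so the inclusion-induced homomorphism $\pi_1(\Sigma_\theta)\to\pi_1(X_L)$ is injective. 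Equivalently, one may pass to the infinite cyclic cover $\widetilde{X_L}\cong\Sigma\times\BR$, in which $\Sigma_\theta$ lifts homeomorphically to a fiber $\Sigma\times\{t\}$ that is a deformation retract of the whole cover; since $\widetilde{X_L}\to X_L$ is a covering map, $\pi_1$-injectivity of the page follows. (Along the way one uses that the inclusion of the interior of a compact surface into the surface is a homotopy equivalence, so there is no loss in passing between the open page and the compact fiber.)

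With injectivity in hand, suppose $D\subset X_L$ is an embedded disk with $\partial D=\alpha\subset\Sigma_\theta$. Then $\alpha$ is null-homotopic in $X_L$, and by the injectivity established above $\alpha$ is already null-homotopic in $\Sigma_\theta$. I would then invoke the classical fact that a simple closed curve in a surface that is null-homotopic in that surface must bound a disk there; for a surface with $\chi<0$ this is immediate from the fact that every nontrivial element of $\pi_1$ of such a surface has infinite order and its conjugacy class is represented by an essential simple closed curve only when the curve is essential, so a null-homotopic embedded circle is inessential, i.e.\ bounds a disk. This produces the required disk in $\Sigma_\theta$ with boundary $\alpha$.

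There is essentially no serious obstacle here: the entire content is the $\pi_1$-injectivity of a fiber in a surface bundle over $S^1$, which is standard. The only points meriting a sentence of care are the identification of the page $\Sigma_\theta$ with a genuine fiber of $p|_{X_L}$ (immediate, since every point of $S^1$ is a regular value of a fibration) and the harmless interchange of the compact page with its interior. I would present the exact-sequence version as the main argument, since it is the shortest, and mention the infinite cyclic cover only as an alternative viewpoint.
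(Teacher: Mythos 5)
Your proof is correct but takes a genuinely different route from the paper. The paper works geometrically with the embedded disk $D$ directly: it puts $D$ in minimal transverse position with respect to $\Sigma_\theta$, runs an innermost-circle argument (using incompressibility of the page) to reduce $D\cap\Sigma_\theta$ to $\partial D$ alone, and then invokes incompressibility once more to conclude that $\alpha$ bounds a disk in the page. You instead pass to $\pi_1$: the short exact sequence of the fibration gives $\pi_1$-injectivity of the page, null-homotopy of $\alpha$ in $X_L$ then forces null-homotopy in $\Sigma_\theta$, and a null-homotopic embedded circle in a surface bounds a disk. Your approach is shorter and cleaner in that it never touches the embedded disk again after extracting the homotopy information; the paper's approach is more elementary in that it stays entirely in cut-and-paste topology and never needs the theorem that a null-homotopic simple closed curve in a surface bounds an embedded disk.

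One caveat: your justification for that last step is circular as written (``its conjugacy class is represented by an essential simple closed curve only when the curve is essential'' is just a restatement of the claim). The fact itself is a genuine theorem about surfaces (due to Epstein; see also Farb--Margalit, \emph{A Primer on Mapping Class Groups}, Prop.~1.2 for the essential-curve case) and should be cited rather than rederived in a line. Once cited, your argument is complete.
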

\begin{proof}
Let $D\subset X_L$ be an embedded disk bounded by $\alpha$ such that $D$ intersects $\Sigma_{\theta}$ transversely, and $D$ intersects $\Sigma_{\theta}$ minimally among all such disks in $X_L$. It suffices to show that the interior of $D$ is disjoint from $\Sigma_{\theta}$, as $\Sigma_{\theta}$ is incompressible. Assume for a contradiction that $\mathring{D}$ is not disjoint from $\Sigma_\theta$ and pick a simple closed curve $\beta\subset D\cap\Sigma_{\theta}$ that is innermost in $D$ so that $\beta$ bounds a subdisk $\Delta$ in $D$ whose interior is disjoint from $\Sigma_{\theta}$. Since $\Sigma_{\theta}$ is incompressible, $\beta$ bounds a disk $\Delta'$ in $\Sigma_{\theta}$. If $\alpha$ lies in $\Delta'$, then $\alpha$ is trivial in $\Sigma_{\theta}$, so we can assume that $\alpha$ does not lie in $\Delta'$. It follows that $\Delta\cup\Delta'$ forms a sphere that bounds a ball in $X_L$ and we can isotope $D$ through this ball to eliminate $\beta$ (and other curves in $\Delta'$) from the intersection of $D$ with $\Sigma_{\theta}$, while maintaining $\alpha=\partial D$. This contradicts the minimality assumption on $D$.
\end{proof}

\begin{Lem}\label{saddles} If $F\subset X_L$ is a regular surface with $\chi(F)\le 0$, then the number of essential saddle tangencies of $F$ to the pages is at most $|\chi(F)|=-\chi(F)$.
\end{Lem}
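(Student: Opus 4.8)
The plan is to track how the Euler characteristic of the slices $F\cap \Sigma_\theta$ changes as $\theta$ sweeps around the circle, using the fact that each saddle tangency is where a page goes from transverse to non-transverse and back. First I would observe that between consecutive critical pages the isotopy type of $F\cap\Sigma_\theta$ does not change, and that passing a center tangency changes $F\cap\Sigma_\theta$ by adding or deleting a trivial circle, while passing a saddle tangency performs an arc surgery (a band move) on $F\cap\Sigma_\theta$. Cutting $X_L$ open along a non-critical page $\Sigma_{\theta_0}$ identifies $X_L$ with $\Sigma\times[0,2\pi]$, and $F$ becomes a surface in $\Sigma\times[0,2\pi]$ whose boundary on the two copies $\Sigma\times\{0\}$ and $\Sigma\times\{2\pi\}$ are the (re-glued) slices. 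A standard Morse-theory bookkeeping then shows that $F$ is built from $(F\cap\Sigma_{\theta_0})\times I$ by attaching one $1$-handle for each saddle and one $0$- or $2$-handle for each center, so that
\begin{align*}
\chi(F) = -(\text{number of saddles}) + (\text{number of centers}).
\end{align*}

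Next I would argue that essential saddles each contribute $-1$ to $\chi$ in a way that cannot be cancelled by centers. The key point, using Lemma \ref{trivial}, is that a saddle tangency is \emph{inessential} precisely when the band move it performs either merges two arcs/curves into a trivial circle, splits off a trivial circle, or joins a trivial circle to something else — i.e. it only interacts with trivial-in-$\Sigma$ circles; by Lemma \ref{trivial} these are exactly the circles of $F\cap\Sigma_\theta$ that bound disks in $X_L$. So I would partition $F$ along all the trivial circles appearing in its slices: capping off every such circle with the disk it bounds in the appropriate page (these disks can be made disjoint and disjoint from the essential part of $F$ after an innermost-disk isotopy) yields a (possibly disconnected) surface $F'$ whose slices $F'\cap\Sigma_\theta$ consist only of essential arcs and curves, with $\chi(F')\le\chi(F)$ since we only capped with disks. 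On $F'$ every center tangency has been removed and every inessential saddle has become either a trivial move or has disappeared, so the Morse count above applied to $F'$ gives $\chi(F') = -(\text{number of essential saddles of }F)$, whence the number of essential saddles is $-\chi(F')\le -\chi(F)=|\chi(F)|$.

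The main obstacle I anticipate is the careful handling of the capping-off step: one must check that the collection of disks bounded by the trivial circles in the various slices can be chosen coherently (disjoint from one another and from the essential subsurface) so that removing them is a genuine surgery producing a subsurface $F'$ with controlled Euler characteristic, and that this surgery really does kill every center and every inessential saddle while leaving the essential saddles — and the count $-\chi(F')$ — intact. Making this precise requires an innermost-disk argument in the spirit of Lemma \ref{trivial}, applied uniformly across the one-parameter family of pages, together with the observation that an essential saddle's defining property (no trivial boundary circle on the nearby component of $F\cap(\Sigma\times[\theta-\epsilon,\theta+\epsilon])$) is exactly what guarantees it survives the capping. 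Once that is set up, the inequality is just the Morse-theoretic Euler characteristic count.
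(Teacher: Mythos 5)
Your strategy is morally parallel to the paper's (both want to show the centers and inessential saddles ``cancel''), but the specific implementation via cutting and capping has two serious problems, and the paper sidesteps both by never modifying $F$ at all.

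First, the Euler characteristic inequality points the wrong way, and your own writeup contradicts itself on it. You assert ``$\chi(F')\le\chi(F)$ since we only capped with disks,'' but capping a boundary circle with a disk \emph{increases} Euler characteristic (if you cut $F$ along a circle and glue a disk into each new boundary, $\chi$ goes up by $2$ per circle; if you compress $F$ along a disk, $\chi$ goes up by $2$; if you replace a subdisk of $F$ by another disk, $\chi$ is unchanged). In every natural reading, $\chi(F')\ge\chi(F)$. Your concluding line, ``the number of essential saddles is $-\chi(F')\le -\chi(F)$,'' actually \emph{requires} $\chi(F')\ge\chi(F)$, so you are using the opposite of what you asserted two sentences earlier. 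This may be a slip of the pen, but it signals that the Euler characteristic bookkeeping has not actually been pinned down.

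Second, and more fundamentally, the identity $\chi(F')=-(\text{number of essential saddles of }F)$ does not follow from the construction. Each cap disk lies in (or is pushed slightly off) a page, and therefore contributes its own center tangency to $F'$. So $F'$ is not a surface with only essential saddle tangencies: the Morse count on $F'$ picks up the new centers, and the formula you want breaks. There is also a conflation at the heart of the construction: you cap along circles that are trivial \emph{in the page} $\Sigma_\theta$ (which Lemma~\ref{trivial} characterizes), but the circles that cleanly isolate centers and whose interiors can be discarded are the ones that bound disks \emph{in $F$}. These are different classes, and the paper's proof is careful to work with the latter: it chooses a maximal outermost family $\{C_1,\dots,C_k\}$ of curves trivial \emph{in $F$}, bounding disks $D_j\subset F$, proves every saddle in each $D_j$ is inessential (via Lemma~\ref{trivial}), and then observes that each $C_j$ must meet a distinct \emph{extra} inessential saddle outside $\cup D_j$, yielding $s\ge k+\sum(a_j-1)=\sum a_j=c$, hence $s'\le -\chi(F)$ directly. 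No surgery on $F$ is performed, so there is no capping-off step whose coherence you would need to justify. If you want to salvage your route, you would have to (i) define $F'$ precisely, (ii) show what happens to $\chi$, and (iii) control the tangencies introduced by the caps -- at which point you will likely find yourself reproving the paper's counting argument in a harder way.
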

\begin{proof} Let $c$ denote the number of center tangencies, $s$ the number of inessential saddle tangencies, and $s'$ the number of essential saddle tangencies of $F$ to the pages. Since each center tangency contributes 1, and each saddle tangency contributes $-1$ to the Euler characteristic of $F$, we have
$$\chi(F)=c-(s+s') \implies s'= -\chi(F)+(c-s).$$
To prove that $s'\le -\chi(F)$, we will show that $(c-s)$ is non-positive, equivalently $s \ge c$. We will do this by analyzing the singular foliation, say $\mathcal{F}$, of $F$ defined by its intersections with the pages. Note that we regard every arc or curve $\alpha$ that is in the intersection of $F$ with a page $\Sigma_\theta$ as a leaf of $\mathcal{F}$, while $\alpha$ is a subset of $F$. So, we write $\alpha\in \mathcal{F}$ and $\alpha\subset F$.

If $\mathcal{F}$ has no leaf that is a trivial simple closed curve in $F$, then there is no center tangency of $F$ to the pages, i.e., $c=0$, and $s\ge c$ trivially holds. So, we can assume that $\mathcal{F}$ has leaves that are trivial simple closed curves in $F$. Then there exists a collection $\mathcal C = \{C_1,\ldots,C_k\}\subset \mathcal{F}$ of simple closed curves, which are pairwise non-isotopic in $F$, such that
\begin{itemize}
\item for each $j\in\{1,\ldots,k\}$,   $C_j$ bounds a disk $D_j$ in $F$, and
\item the collection $\mathcal{C}$ is outermost and maximal in $F$, i.e., if there exists another curve $C'\in \mathcal{F}$ that is trivial in $F$, then there exists a $j\in\{1,\ldots, k\}$ such that either $C'$ is in $D_j$ or $C'$ and $C_j$ cobound an annulus $F$ that is transverse to the pages.
\end{itemize}
For each $j=1,\ldots,k$, let $a_j$ (resp. $b_j$) be the number of center (resp. saddle) tangencies in $D_j$. It follows that 
$$\chi(D_j)=a_j-b_j=1 \implies b_j= 1-a_j.$$
The lemma will follow from the following observations.
\begin{enumerate}
	\item Outside $\cup_{j=1}^k D_j$ there are no center tangencies: This is because the collection $\mathcal{C}$ is maximal.
	\item For $j=1,\ldots, k$, each saddle tangency in $D_j$ is inessential: Let $p$ be a saddle tangency of $F$ to a page $\Sigma_{\theta}$ that lies in some $D_j$. For $\epsilon>0$ sufficiently small, the component of $F\cap (\Sigma\times [\theta-\epsilon,\theta+\epsilon])$ that contains the tangency has boundary components that lie in $D_j$. By the previous lemma, those boundary components are trivial in $\Sigma_{\theta\pm \epsilon}$ since they bound disks in $D_j$. Hence, by definition, $p$ is an inessential saddle tangency.
	\item For $j=1,\ldots, k$, each $C_j$ is trivial in the corresponding page (by the previous lemma) because $C_j$ bounds a disk $D_j$.
	\item For $j=1,\ldots, k$, each $C_j$ meets a different inessential saddle tangency outside $\cup_{j=1}^k D_j$: Otherwise, two $C_j$'s merge at the same inessential saddle tangency, which yields a third curve $C'$ that is trivial in $F$ and that is not contained in $\cup_{j=1}^k D_j$. Notice that $C'$ cannot cobound an annulus with any $C_j$, which contradicts the maximality assumption on the collection $\C$.
\end{enumerate}
By (1), we have $c=a_1+\ldots+a_k$. By (2), the number of inessential saddle tangencies inside $\cup_{j=1}^k D_j$ is $b_1+\ldots +b_k$. By (3) and (4), the number of inessential saddle tangencies outside $\cup_{j=1}^k D_j$ is at least $k$ (one for each $C_j$, and possibly more outside $\cup_{j=1}^k D_j$). Therefore, we obtain $$s \ge k + b_1+ \ldots +b_k = k + (a_1-1) + \ldots +(a_k-1) = a_1+\ldots+a_k = c,$$ as desired.
\end{proof}

\subsection{Perfect surfaces in fibered link exteriors}
In the previous subsection, we argued that every properly embedded surface $F\subset X_L$ with (possibly empty) boundary components of non-zero slopes can be isotoped to be regular in $X_L$ with certain tangency properties. To prove Proposition \ref{EssentialBound}, we will need a regular surface in $X_L$ with an additional nice property that we will call \emph{perfectness}. In this section, we define that property and show that most incompressible surfaces satisfy it.

\begin{Def}[perfectness]\label{perfect} Let $F\subset X_L$ be a properly embedded regular surface.
\begin{itemize}
\item If $F$ has non-empty boundary components of non-zero slopes, then it is called \emph{perfect} if for any non-critical page $\Sigma_\theta$, every arc in $F\cap \Sigma_\theta$ is essential in $\Sigma_\theta$.
\item If $F$ is a closed surface, then it is called \emph{perfect} if for any non-critical page $\Sigma_\theta$, there exists a simple closed curve in $F\cap \Sigma_{\theta}$ that is essential in $\Sigma_{\theta}$.
\end{itemize}
\end{Def}

Even though every surface in $X_L$ is regular up to isotopy, there are surfaces that are not perfect in $X_L$. A trivial example is a $\partial$-parallel annulus in $X_L$. However, we can show that incompressible surfaces, which have non-empty boundary and are not boundary-parallel, are perfect in $X_L$. This will be useful to prove Proposition \ref{EssentialBound} for essential/incompressible surfaces that cannot be isotoped into $\partial X_L$.

\begin{Lem}\label{boundaryperfect}
Let $F\subset X_L$ be an orientable regular surface with non-empty boundary components of non-zero slopes in $\partial X_L$. If $F$ is incompressible, then it is either perfect or a $\partial$-parallel annulus.
\end{Lem}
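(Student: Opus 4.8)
The plan is to prove the contrapositive-flavored dichotomy directly: assuming $F$ is incompressible with non-empty, non-zero-slope boundary and $F$ is \emph{not} perfect, I will show that $F$ must be a $\partial$-parallel annulus. Since $F$ is regular but not perfect, by definition there is a non-critical page $\Sigma_\theta$ such that $F\cap\Sigma_\theta$ contains an inessential arc $\alpha$. First I would take $\alpha$ to be \emph{outermost} on $\Sigma_\theta$ among all inessential arcs of $F\cap\Sigma_\theta$, so that $\alpha$ together with a subarc $\beta\subset\partial\Sigma_\theta$ cuts off a disk $\Delta\subset\Sigma_\theta$ whose interior meets $F$ only in simple closed curves (and possibly essential arcs, but an outermost choice removes inessential ones). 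The goal of this first step is to produce a $\partial$-compressing disk for $F$ in $X_L$ with corner along $\partial X_L$; incompressibility alone does not forbid this, but it constrains what can happen.

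Next I would use the $\partial$-compression. Because $F$ is incompressible, a standard innermost-disk/outermost-arc argument lets me clean up $\Delta$: simple closed curves of $F\cap\inter\Delta$ bound disks in $F$ (by incompressibility, after using Lemma~\ref{trivial}-style reasoning inside pages is not quite what's needed here — rather I use that $\Sigma_\theta$ is incompressible in $X_L$ and $F$ is incompressible), and these can be removed by isotopy of $F$; similarly any essential arcs of $F\cap\inter\Delta$ can be pushed off using outermost-ness. This yields an honest $\partial$-compressing disk $\Delta$ for $F$. Performing the $\partial$-compression on $F$ along $\Delta$ produces a new incompressible surface $F'$; keeping track of Euler characteristic, $\chi(F')=\chi(F)+1$, and the boundary slope is preserved (it is still non-zero), because $\partial\Delta\cap\partial X_L$ is an arc $\beta$ lying in a page-boundary annulus neighborhood and the surgery on $\partial F$ is a band move along the zero-slope direction, which cannot change a non-zero slope into the zero slope or the meridian without creating a compressible or inessential piece — this point needs a careful check.

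Then I would invoke the classification of incompressible, $\partial$-incompressible surfaces in a surface bundle: after the $\partial$-compression, either $F'$ is $\partial$-incompressible (hence, by the Thurston-type result cited in the paper's discussion of \cite{Thurs}, it can be isotoped to meet pages in essential arcs and curves only, i.e. it is perfect), or we can $\partial$-compress again. Iterating, since $\chi$ increases by $1$ each time and is bounded above, the process terminates, and I would trace back up: the only way an incompressible surface $F$ with non-zero boundary slope can fail to be perfect is if every $\partial$-compression eventually exhibits $F$ as built from a $\partial$-incompressible piece by attaching trivial bands, and the accounting forces $F$ itself to be a $\partial$-parallel annulus (the sole incompressible surface that $\partial$-compresses to a disk, which is the degenerate perfect-or-not case). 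The main obstacle I anticipate is the boundary-slope bookkeeping in the middle step: ensuring that the arc $\beta=\partial\Delta\cap\partial X_L$ does not sit on the zero slope — if it did, the $\partial$-compression could change the slope or even cap off a boundary component — so I would need a lemma (or an appeal to the regular/non-zero-slope hypothesis together with transversality of $\partial F$ to $\partial\Sigma_\theta$) guaranteeing $\beta$ runs parallel to $\partial\Sigma_\theta\cap\partial X_L$ and the band move is "horizontal," after which the argument is the standard one for incompressible surfaces in fibered exteriors.
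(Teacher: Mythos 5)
Your route is genuinely different from the paper's, and it has a real gap. The paper does not iterate $\partial$-compressions at all: it takes the outermost inessential arc $\alpha$ cutting off a disk $\Delta\subset\Sigma_\theta$ disjoint from $F$, and then converts this single $\partial$-compressing disk into a \emph{compressing} disk $D$ for $F$ by doubling $\Delta$ to $\Delta^+\cup\Delta^-$ and joining them with the band $B\subset\partial X_L$ running the ``long way'' around the annulus between the two boundary curves of $F$ met by $\beta=\Delta\cap\partial\Sigma_\theta$. Then incompressibility of $F$ forces $\partial D$ to bound a disk $D'\subset F$, and since $\partial D$ is the third boundary circle of the pair-of-pants neighborhood of $\alpha\cup\partial_1F\cup\partial_2F$ in $F$, this immediately gives $F=D'\cup(\text{pair of pants})$, an annulus, $\partial$-parallel by construction. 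That doubling move — exploiting that $\partial X_L$ is a torus so a $\partial$-compressing disk with boundary along a non-zero slope promotes to a compressing disk — is the whole proof, and it is entirely absent from your plan.

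The concrete gap in your iteration is the ``trace back up'' step. Knowing that after one or several $\partial$-compressions you reach a $\partial$-incompressible, hence perfect, surface $F'$ tells you nothing about $F$ itself: $F$ is obtained from $F'$ by attaching bands along $\partial X_L$, and that operation can certainly produce non-perfect incompressible surfaces that are not annuli, so the dichotomy does not follow from termination alone. (You flag the slope bookkeeping as the delicate point, but in fact it is worse: the band move along the zero-slope arc $\beta$ can merge two non-zero-slope boundary curves into a curve that is trivial on the boundary torus, so the slope hypothesis is not even preserved under your $\partial$-compression, and the Thurston-style positioning of $F'$ no longer applies.) The fix is to abandon the iteration and run the single doubling argument directly on $F$; then no inductive accounting or slope tracking is needed, and incompressibility alone closes the proof.
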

\begin{proof}
Assume $F$ is not perfect. We will show that it is a $\partial$-parallel annulus in $X_L$. Let $\Sigma_{\theta}$ be a page such that $F\cap \Sigma_\theta$ contains an arc that is inessential in $\Sigma_\theta$. Then there exists an arc $\alpha\subset F\cap\Sigma_{\theta}$ that cuts off a half disk $\Delta\subset \Sigma_{\theta}$ whose interior does not intersect $F$. Since $\partial F$ and $\partial \Sigma_\theta$ are transverse, $\partial \Delta$ meets two distinct boundary components of $\partial F$, which cobound an annulus $A$ in a component of $\partial X_L$. Let $N(\Delta) \cong \Delta\times [-1,+1]$ represent a neighborhood of $\Delta$ in $X_L$, and $\Delta^\pm$ denote $\Delta\times \{\pm1\}$. Joining $\Delta^+$ and $\Delta^-$ with the band $B = A\setminus N(D)$, we obtain a compressing disk $D = \Delta^+\cup B \cup \Delta^-$ in $X_L$ such that $D \cap F=\partial D$. Since $F$ is incompressible, we deduce that $\partial D$ bounds a disk in $F$, and thus, $F$ is an annulus. By construction, $F$ is $\partial$-parallel.
\end{proof}
A similar lemma holds for incompressible surfaces in $M$ that completely lie in $X_L$. However, to prove the lemma, we will need the following operation.\vspace{2mm}

\noindent \textbf{Annulus Surgery.} Let $F$ be any closed surface in a fibered link exterior $X_L$. Assume that $F$ is transverse to a page $\Sigma_\theta$ such that a curve $\gamma\subset F\cap \Sigma_\theta$ cuts off an annulus $A\subset \Sigma_\theta$ whose interior is disjoint from $F$. Take a neighborhood $N(A)\cong A\times [-1,1]$ of $A$ in $X_L$ so that $N(A)\cap F=N(\gamma)\cong \gamma\times[-1,1]\subset F$. Now let $T$ be the boundary component of $\partial X_L$ which meets $A$, and $A'$ the annulus $T\setminus N(A)$. It follows that $(N(\gamma)\cup A^-\cup A' \cup A^+)$ is a peripheral torus isotopic to $T$ in $X_L$, where $A^\pm$ represents $A\times \{\pm 1\}$ in $N(A)$. Isotoping $F$ in $M$ through the solid torus bounded by $T$ in $M$, we can replace the annulus $N(\gamma)$ by the annulus $(A^-\cup A' \cup A^+)$. This operation is called the \emph{annulus surgery} of $F$ along $\gamma$. In Figure \ref{ansurg}, we describe the isotopy in a schematic picture, where the link $L$ and the curve of intersection $\gamma$ are represented as dots. Notice that this isotopy of $F$ in $M$ eliminates  $\gamma$ from $F\cap \Sigma_\theta$.
\begin{figure}[h]
	\centering
	\def\svgwidth{0.7\linewidth}
	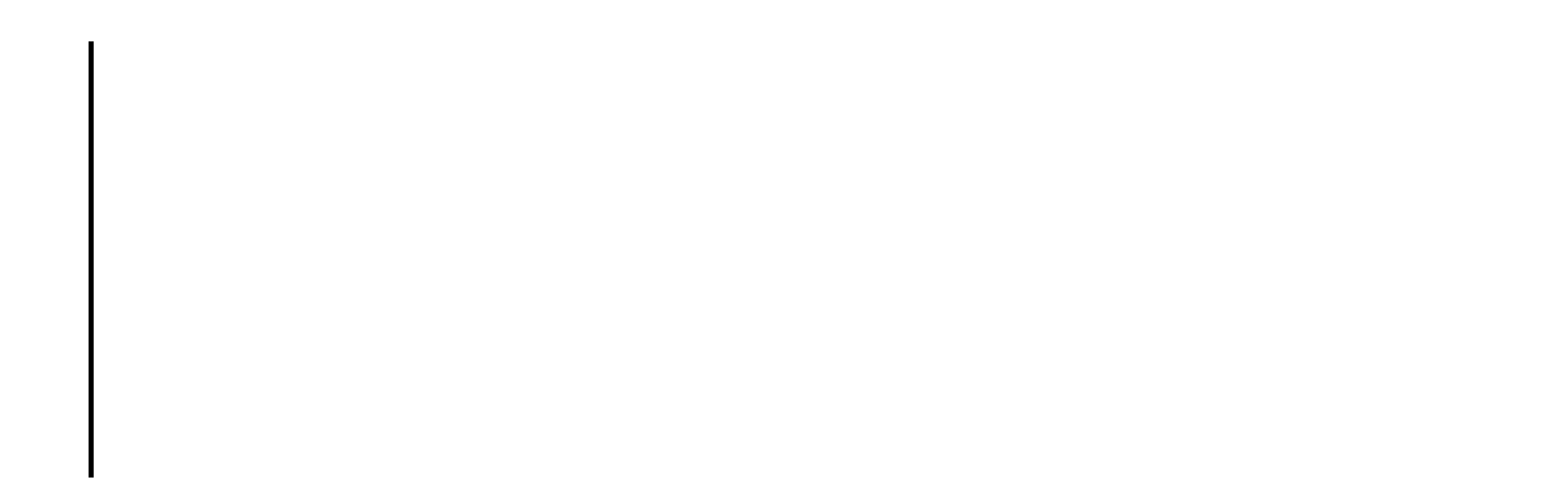
	\caption{The result of annulus surgery is on the right.\label{ansurg}}
\end{figure}

\noindent The annulus surgery is useful to establish the following lemma.
\begin{Lem}\label{closedperfect}
Let $F$ be a closed incompressible surface in $M$. If $F$ is disjoint from a fibered link $L$ and regular in $X_L$, then $F$ is perfect in $X_L$.
\end{Lem}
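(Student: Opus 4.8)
The plan is to isotope $F$ to be regular in $X_L$ and then argue by contradiction: suppose some non-critical page $\Sigma_\theta$ meets $F$ only in inessential curves, and derive a contradiction with the incompressibility of $F$ (noting $F$ is closed, so it has no boundary to worry about). Since every curve of $F\cap\Sigma_\theta$ is inessential in $\Sigma_\theta$, each such curve either bounds a disk in $\Sigma_\theta$ or cuts off an annulus from $\Sigma_\theta$ (a peripheral curve). First I would clean up the disk-bounding curves: take an innermost such curve $\gamma$ bounding a disk $D\subset\Sigma_\theta$ disjoint from $F$; since $F$ is incompressible, $\gamma$ bounds a disk $D'\subset F$, so $D\cup D'$ is a sphere bounding a ball (using that $M$ is irreducible — or, if not, an essential sphere gives $d_\A(\phi)\le 3$ by Proposition~\ref{essential} and we are done anyway), and we isotope $F$ across that ball to remove $\gamma$ and all curves inside $D$ from $F\cap\Sigma_\theta$. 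Repeating, I may assume $F\cap\Sigma_\theta$ consists only of peripheral curves.

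Next I would handle the peripheral curves using the Annulus Surgery operation just introduced. Take a peripheral curve $\gamma\subset F\cap\Sigma_\theta$ cutting off an annulus $A\subset\Sigma_\theta$ that is innermost, i.e. $\inter(A)$ is disjoint from $F$; such a $\gamma$ exists provided $F\cap\Sigma_\theta\neq\emptyset$. Applying the annulus surgery of $F$ along $\gamma$ replaces the annulus $N(\gamma)\subset F$ by $A^-\cup A'\cup A^+$, removing $\gamma$ from $F\cap\Sigma_\theta$ via an isotopy of $F$ in $M$ through the solid torus bounded by the peripheral torus $T$; importantly the surgered surface is still closed, still incompressible, and after a small perturbation still regular, with strictly fewer intersection curves with $\Sigma_\theta$. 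Iterating both cleanup moves, $F$ is isotoped so that $F\cap\Sigma_\theta=\emptyset$, i.e. $F$ lies entirely in the non-critical region $\Sigma\times(\theta,\theta+2\pi)\cong\Sigma\times(0,1)$, hence inside a product $\Sigma\times[0,1]$.

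The final step is to observe that a closed incompressible surface cannot embed in $\Sigma\times[0,1]$ unless it is inessential in the product, and trace back what this means in $M$. Concretely, $\Sigma\times[0,1]$ is a handlebody (or, accounting for $\partial\Sigma$, a compression body with irreducible, $\pi_1$-injective boundary behavior), and an incompressible closed surface in a handlebody must bound a ball on one side, hence be a sphere — but then, reversing all the isotopies, $F$ was a sphere in $M$ to begin with; if it is essential we invoke Proposition~\ref{essential}, and if it bounds a ball it was never incompressible, a contradiction. Thus no such $\Sigma_\theta$ exists and $F$ is perfect.

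The main obstacle I anticipate is bookkeeping in the reduction steps: ensuring that each annulus surgery and each disk-isotopy genuinely decreases $|F\cap\Sigma_\theta|$ without reintroducing intersections with \emph{other} pages in a way that breaks regularity, and checking that ``innermost'' curves on $\Sigma_\theta$ of each type can always be found and processed in the right order (disks first, then annuli). A secondary subtlety is the case $F\cap\Sigma_\theta=\emptyset$ from the outset, which must be addressed directly by the product argument; and one should keep the irreducibility hypothesis on $M$ in view, handling the reducible case by falling back on the sphere clause of Proposition~\ref{essential}.
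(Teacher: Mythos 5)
Your proposal matches the paper's proof essentially step for step: pass to a page $\Sigma_\theta$ where all intersection curves are inessential, eliminate the trivial ones by the innermost-disk argument (valid because $X_L$ is irreducible, being a fibered link exterior — so the fallback to Proposition~\ref{essential} for a reducible $M$ that you worry about is unnecessary), then eliminate the peripheral ones by annulus surgery, and conclude that $F$ lies in the handlebody $X_L\setminus N(\Sigma_\theta)\cong\Sigma\times I$, contradicting incompressibility. The paper phrases this as proving the contrapositive rather than deriving a contradiction, and says ``outermost'' where you say ``innermost'' for the peripheral curves (both mean the curve cutting off an annulus disjoint from $F$), but these are cosmetic differences.
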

\begin{proof}
We prove the contrapositive. Assume that $F$ is not perfect. Let $\Sigma_\theta$ be a page transverse to $F$ such that every curve in $F\cap \Sigma_\theta$ is inessential in $\Sigma_\theta$. If there are trivial curves in the intersection, then by applying the standard innermost curve argument, we can isotope $F$ to eliminate those trivial curves from the intersection. So, we can assume that every curve in $F\cap\Sigma_{\theta}$ is peripheral in $\Sigma_{\theta}$. Applying repeated annulus surgeries to $F$ along the peripheral curves, starting with outermost ones, we can further isotope $F$ in $M$ to eliminate all peripheral curves from $F\cap \Sigma_{\theta}$. At the end, we get an isotopic copy of $F$ in $M$ that is disjoint from the page $\Sigma_{\theta}$. In other words, $F\subset M$ can be isotoped into the handlebody $X_L\setminus \mathring{N}(\Sigma_\theta)$, which implies that $F$ is compressible in $M$.
\end{proof}

\subsection{Complexity bounds}
In the proof of Proposition \ref{EssentialBound}, we will introduce an incompressible surface in $X_L$, which is also perfect. Therefore, in this subsection, we observe how a perfect surface in $X_L$ imposes a complexity bound for the  monodromy $\phi$ of $L$. The following lemma is adapted from Lemma 17 in \cite{Johnson} with an improvement on the upper bound.

\begin{Lem}\label{meridional}
Let $F\subset X_L$ be a genus $g$ perfect surface with non-empty meridional boundary components in $\partial X_L$ such that $|\partial F| =  2n$. Then
$$d_\A(\phi)\le\begin{cases}
\ \ \ \ 0\ \ \ \ \, ,\text{ if $g=0$ and $n=1$,}\\
\ \ \ \ 3\ \ \ \ \, ,\text{ if $g=0$ and $n\ge 2$,}\\
 \ \ \ 2g\ \ \ \ ,\text{ if $g\ge 1$ and $n=1$,}\\
2g+2,\text{ if $g\ge 1$ and $n\ge 2$.}
\end{cases}$$
\end{Lem}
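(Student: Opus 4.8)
The plan is to track how the isotopy classes of the arcs in $F\cap\Sigma_\theta$ evolve as $\theta$ sweeps around $S^1$, using the fact that $F$ is perfect to guarantee that every non-critical page meets $F$ in a collection of arcs at least one of which is essential. Since $F$ has meridional boundary, each boundary component of $F$ hits each $\partial\Sigma_\theta$ in a single point (the meridian is dual to $\partial\Sigma$ on the torus $\partial X_{K_i}$), so $F\cap\Sigma_\theta$ consists of $n$ arcs together with possibly some closed curves; the perfectness hypothesis lets me ignore the closed curves and inessential arcs. First I would isotope $F$ to be regular with the tangency properties from the previous subsection, so that between consecutive critical pages the arc-and-curve system in the page changes only by isotopy, and at each critical page it changes by a single saddle move (inessential saddles do not change the isotopy classes of the essential arcs, so only essential saddles matter). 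By Lemma~\ref{saddles}, the number of essential saddles is at most $-\chi(F)$.

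Next I would compute $-\chi(F)$ in terms of $g$ and $n$: a genus $g$ surface with $2n$ boundary components has $\chi(F)=2-2g-2n$, so $-\chi(F)=2g+2n-2$. The key step is then a monodromy-tracking argument analogous to Theorem~3.1 of \cite{BaSch}: pick an essential arc $\alpha_0$ appearing in some page $\Sigma_0$ (exists by perfectness); following $F$ around, one obtains a sequence of essential arcs $\alpha_0,\alpha_1,\dots$ in which consecutive terms are either isotopic or differ by a single essential saddle and hence are disjoint, i.e. bound an edge in $\A(\Sigma)$; after a full turn we arrive at $\phi(\alpha_0)$ (up to isotopy). This gives $d_\A(\phi)\le d_\A(\alpha_0,\phi(\alpha_0))\le(\text{number of essential saddles})\le 2g+2n-2$. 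To sharpen this to the stated bounds I would exploit two refinements: (i) when $n=1$ there is only a single arc in each page, so a saddle connecting an arc to itself is forced to either be inessential or to split off a closed curve, which improves the count by $2$, giving $2g$; and (ii) when $g=0$ and $n=1$, $F$ is an annulus (actually a disk once capped, but meridional, so a meridional annulus), and its intersection arcs with a page are all isotopic, forcing $\phi(\alpha_0)=\alpha_0$ up to isotopy, hence $d_\A(\phi)=0$; similarly for $g=0,n\ge2$ one gets $-\chi(F)=2n-2$ but a parity/planarity argument cuts this to $3$ (this is where the constant $3$, matching the essential-sphere case, comes from — $F$ capped off is a sphere meeting $L$ in $2n$ points).

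The main obstacle I expect is getting the bounds exactly right rather than off by an additive constant — in particular justifying the improvement from the crude count $2g+2n-2$ down to $2g$ (when $n=1$) and down to $3$ (in the planar cases). This requires a careful local analysis of what an essential saddle can do to a system of disjoint arcs: one must argue that at least one or two of the potential saddles are necessarily inessential, or that the first and last arcs in the sweepout can be chosen to coincide, thereby shaving off the endpoints of the chain. I would handle the $n=1$ case by noting that a single properly embedded essential arc in $\Sigma$ together with a saddle to itself must produce either a closed curve or a pair of arcs, and in the former case the saddle is inessential by Lemma~\ref{trivial}-type reasoning; in the planar cases I would cap $F$ off to a sphere in $M$ and invoke the sphere part of the argument underlying Proposition~\ref{essential} directly. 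The rest is the bookkeeping of the Euler characteristic count and the standard observation that disjoint essential arcs are at distance $1$ in $\A(\Sigma)$.
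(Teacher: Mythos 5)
Your framework (regularize $F$, bound essential saddles by $-\chi(F)=2g+2n-2$, follow a chain of essential arcs around the sweepout) is the right skeleton and matches the paper's setup, but the core quantitative step is missing, and the fallback you offer for it is circular.

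The crude bound $d_\A(\phi)\le(\text{number of essential saddles})\le 2g+2n-2$ grows with $n$, while the stated bounds do not. For $g\ge 1$ it gives $2g+2$ only when $n\le 2$; for $n\ge 3$ it exceeds the target, and your proposed refinement (i) does not address this at all — in fact for $n=1$ the crude bound is already $2g$, so no improvement is needed there, and for $n\ge 3$ you offer nothing. Similarly for $g=0$, $n\ge 2$ the crude bound is $2n-2$, which passes $3$ as soon as $n\ge 3$. You propose to rescue the planar case by ``capping $F$ off to a sphere and invoking the sphere part of the argument underlying Proposition~\ref{essential}''; but the sphere case of Proposition~\ref{essential} is itself proved by this very lemma, so that move is circular. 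The missing idea in the paper is a pigeonhole count over endpoints: there are at most $N\le n+2m$ distinct essential arc types realized by $F\cap\Sigma_\theta$ over all $\theta$, each contributing two endpoints to the $2n$ marked points $\{x_1,\dots,x_{2n}\}\subset\partial\Sigma$, so $\sum_j k_j=2N\le 10n+8g-8$ and hence some $x_j$ is an endpoint of at most $k\le 5+(4g-4)/n$ arc types. One then tracks the chain of arcs through that single fixed endpoint (which is where $\phi(x_j)=x_j$ gives $\phi(\alpha_1)=\alpha_k$), obtaining $d_\A(\phi)\le k-1$, which is $n$-independent once $n\ge 2$. This averaging step, not ``following an arc around'', is what makes the bound uniform in $n$. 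Your $n=1$ and $(g,n)=(0,1)$ cases are essentially correct, but they are also the cases where the crude bound already suffices.
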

\begin{proof}
Let $S$ be the preimage of $F$ under the quotient map $q: \Sigma\times [0,2\pi]\to X_L$, which maps $\Sigma\times\{\theta\}$ to $\Sigma_\theta$ in the natural way. For simplicity, we will not distinguish $\Sigma\times\{\theta\}$ and $\Sigma_\theta$. We can assume that $\Sigma_0$ is transverse to $F$ (after slightly rotating the pages, if necessary). Since $|\partial F|=2n$, there exist $n$ arcs in $S\cap \Sigma_0$, and $S\cap (\partial \Sigma\times [0,2\pi])$ consists of vertical arcs $\{x_1,\ldots,x_{2n}\}\times [0,2\pi]$ as the boundary components are meridional.

By Lemma \ref{saddles}, the number, say $m$, of essential saddle tangencies of $F$ to the pages is at most $-\chi(F)=2g+2n-2$. Let $0<\theta_1<\ldots<\theta_m<2\pi$ be the angles such that each essential saddle tangency of $F$ is contained in one of the $\Sigma_{\theta_i}$'s. For $i=1,\ldots,m-1$, pick $\theta_i<t_i<\theta_{i+1}$ such that $\Sigma_{t_i}$ is transverse to $F$. Furthermore, set $t_0=0$ and $t_m=2\pi$.

Now we will investigate how many different isotopy classes of arcs in $\Sigma$ can be observed in the intersection of $S$ with $\Sigma_\theta$, $\theta \in[0,2\pi]$. To begin with, there are $n$ essential arcs in $S\cap\Sigma_0$. Moreover, for any pair $\theta<\theta'$ in $[0,2\pi]$, we have the following observations:
\begin{enumerate}
\item[(a)] If $\Sigma\times[\theta,\theta']$ contains no essential saddle tangencies of $F$, then the arcs in $S\cap \Sigma_{\theta}$ and $S\cap \Sigma_{\theta'}$ represent the same isotopy types in $\Sigma$.
\item[(b)] If $\Sigma\times[\theta,\theta']$ contains a single essential saddle tangency of $F$, then at most two isotopy classes of arcs in $S\cap \Sigma_{\theta'}$ are different from the isotopy classes of arcs in $S\cap \Sigma_{\theta}$. Different isotopy classes are introduced as two arcs (or an arc and a simple closed curve) in $\Sigma_\theta$ merge at the essential saddle tangency, or vice versa.
\end{enumerate}
These observations imply that as $\theta$ increases from $0$ to $2\pi$,  $S\cap \Sigma_\theta$ realizes at most two new arc types in $\Sigma_\theta$ exactly when $\theta$ passes through one of the $\theta_i$'s, $i=1,\ldots,m$. It follows that the number, say $N$, of isotopy classes of arcs that can be observed in $S\cap \Sigma_{\theta}$, for $\theta\in [0,2\pi]$, is at most 
$$n+2m \le n+2[-\chi(F)]= n+2(2g+2n-2)=4g+5n-4.$$
Note also that each of these arcs is essential in its respective page because $F$ is assumed to be perfect. Moreover, every arc type that is realized in $S\cap \Sigma_{\theta}$ has its endpoints in $\{x_1,\ldots,x_{2n}\}\subset \Sigma_\theta$ by abusing the notation. For $j=1,\ldots,2n$, let $k_j$ be the number of isotopy classes of arcs in $F\cap\Sigma_{\theta}$ that have $x_j$ as an endpoint. It follows that $$k_1+\ldots+k_{2n}=2N\le10n+8g-8$$ 
because each of the $N$ isotopy classes is counted twice (once for each endpoint) in the sum. We deduce that for some $x_j$, the number $k_j$ is at most $(10n+8g-8)/2n=5+(4g-4)/n$.  Without loss of generality, let $x_1$ be the endpoint realized by
$$k\le 5+(4g-4)/n$$ isotopy classes of arcs. Let $\alpha_1,\ldots,\alpha_{k}$ be those isotopy classes. For $i=1,\ldots,k-1$, up to relabelling, we can assume that $\alpha_{i+1}$ is introduced as $\alpha_{i}$ merges into a saddle tangency of $F$. Therefore, $\alpha_{i}$ and $\alpha_{i+1}$ can be isotoped to be disjoint in $\Sigma$, i.e., $d_\A(\alpha_{i},\alpha_{i+1})\le 1$. Moreover, $\phi(\alpha_1)=\alpha_{k}$ because $\phi(x_1)=x_1$. Thus, we obtain
$$d_\A(\phi)\le d_\A(\alpha_1,\phi(\alpha_1))=d_\A(\alpha_1,\alpha_{k})\le\sum_{i=1}^{k-1} d_\A(\alpha_i,\alpha_{i+1})=k-1.$$

Now we will run a case analysis depending on the values of $g$ and $n$ to provide the complexity bounds stated in the lemma.
\vspace{2mm}
	
\noindent\textbf{Case 1.} $g=0$ and $n=1$: In this case, $F$ is an annulus and has no essential saddle tangencies to the pages since $\chi (F)=0$, and we get $k=1$. Hence, we obtain $d_\A(\phi)\le k-1=0$, which implies  $d_\A(\phi)=0$.
\vspace{2mm}

\noindent\textbf{Case 2.} $g=0$ and $n\ge 2$: In this case, we have $k\le 5+(4g-4)/n=5-4/n$, which implies that $k$ is at most $4$. Thus, we obtain
$d_\A(\phi)\le k-1\le 3.$\vspace{2mm}

\noindent\textbf{Case 3.} $g\ge 1$ and $n=1$: In this case, $k\le 5+4g-4=4g+1$. However, $4g+1$ is an unnecessarily large upper bound. Because when $n=1$, we have a single essential arc observed by $F\cap \Sigma_{\theta}$ in between each pair of consecutive essential saddle tangencies. Hence, $k\le 1+m \le 1+2+2g-2=2g+1$. Thus, we obtain $d_\A(\phi)\le k-1\le 2g+1-1=2g$.\vspace{2mm}

\noindent\textbf{Case 4.} $g\ge 1$ and $n\ge 2$: In this case, $k\le 5+(4g-4)/n\le 2g+3$. Thus, we obtain $d_\A(\phi)\le k-1\le 2g+3-1=2g+2$.
\end{proof}

We will now prove a similar lemma for closed perfect surfaces in $X_L$.
\begin{Lem}\label{closedbound}
Let $F\subset X_L$ be a properly embedded, closed, perfect surface. If $F$ is a torus, then $d_\C(\phi)\le 1$. If $F$ has genus $g\ge 2$, then $d_\C(\phi)\le -\chi(F)$.
\end{Lem}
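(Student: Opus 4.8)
The plan is to mimic the counting argument of Lemma~\ref{meridional}, replacing the role of essential arcs with essential simple closed curves in the pages and the arc complex with the curve complex. First I would take $S=q\inv(F)\subset \Sigma\times[0,2\pi]$ as before, and (after a small rotation) arrange that $\Sigma_0$ is a non-critical page. Since $F$ is closed, $S\cap\Sigma_0$ consists of finitely many simple closed curves, and because $F$ is perfect, at least one of them, call it $\gamma$, is essential in $\Sigma_0$. By Lemma~\ref{saddles} the number $m$ of essential saddle tangencies of $F$ to the pages is at most $-\chi(F)=2g-2$. I would then pick non-critical pages $\Sigma_{t_0},\ldots,\Sigma_{t_m}$ separated by the critical angles that carry the essential saddles, exactly as in Lemma~\ref{meridional}.

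The heart of the argument is the analogue of observations (a) and (b): as $\theta$ passes a non-critical interval the isotopy classes of curves in $F\cap\Sigma_\theta$ are unchanged, and as $\theta$ crosses a single essential saddle tangency, the collection of \emph{essential} curves in $F\cap\Sigma_\theta$ changes by passing to curves of distance at most one in $\C(\Sigma)$ — when two curves merge into one, or one curve splits into two, at the saddle, the surviving essential curve is disjoint from (hence at distance $\le 1$ from) an essential curve present on the other side, provided an essential curve persists. The subtle point is that at a critical page it is conceivable that \emph{every} essential curve on one side is destroyed (e.g. an essential curve merges with a trivial or peripheral curve and becomes inessential), so that there is no essential curve in the critical page itself to act as a bridge. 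Here perfectness saves us on the non-critical pages on either side: choosing $\Sigma_{t_{i-1}}$ and $\Sigma_{t_i}$ non-critical, each carries an essential curve, and passing a single essential saddle between them changes the isotopy classes of the full curve system by disjoint surgery, so some essential curve of $F\cap\Sigma_{t_{i-1}}$ is disjoint up to isotopy from some essential curve of $F\cap\Sigma_{t_i}$; handling the possibility that the bridging curve is momentarily inessential \emph{at} the saddle page (but essential just before and just after) is the main obstacle, and I would deal with it by a careful local analysis of the saddle together with Lemma~\ref{trivial} to control which merging curves can be trivial.

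Granting this, I would produce a sequence of essential curves $\gamma=\beta_0,\beta_1,\ldots,\beta_m=\gamma'$, one essential curve chosen in each $F\cap\Sigma_{t_i}$, with $d_\C(\beta_{i-1},\beta_i)\le 1$ for each $i$, where $\beta_0\subset\Sigma_0$ and $\beta_m$, when pushed across the seam $\Sigma_{2\pi}=\phi(\Sigma_0)$, is an essential curve of $F\cap\Sigma_0$ again; since $F$ is embedded in $X_L$, its curves in the two copies of $\Sigma_0$ are identified by $\phi$, so $\phi(\beta_0')=\beta_m$ for some essential curve $\beta_0'\subset F\cap\Sigma_0$. If $F$ is a torus then $-\chi(F)=0$, so $m=0$ and there are no essential saddles; then $F\cap\Sigma_\theta$ carries a fixed essential curve $\gamma$ for all $\theta$ on a non-critical arc, and $\gamma$ and $\phi(\gamma)$ cobound an embedded annulus in $X_L$ (the trace of $\gamma$ in $F$), hence $d_\C(\gamma,\phi(\gamma))\le 1$, giving $d_\C(\phi)\le 1$. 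If $g\ge 2$, then chaining and using $\phi(\beta_0')=\beta_m$ together with $d_\C(\beta_0,\beta_0')\le m$ (both lie in the finitely many essential classes realized, and one can route through the same chain) yields
$$d_\C(\phi)\le d_\C(\beta_0',\phi(\beta_0'))\le \sum_{i=1}^{m} d_\C(\beta_{i-1},\beta_i)\le m\le -\chi(F),$$
as claimed. The only delicate accounting is making sure the chain genuinely connects an essential curve of $F\cap\Sigma_0$ to its $\phi$-image and that no off-by-one creeps in from the identification at $\Sigma_0=\Sigma_{2\pi}$; I expect this to be routine once the saddle-crossing lemma is established.
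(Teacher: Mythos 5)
Your overall approach matches the paper's: pass to $S=q^{-1}(F)$, bound the number of essential saddles by $-\chi(F)$ via Lemma~\ref{saddles}, and chain essential curves through the non-critical pages with unit steps in $\C(\Sigma)$; the torus case is handled in essentially the same way. However, the endpoint accounting for $g\ge 2$ has a real gap. You choose $\beta_m\subset S\cap\Sigma_{2\pi}$ as an independent essential curve and then note $\beta_m=\phi(\beta_0')$ for some $\beta_0'\subset S\cap\Sigma_0$, possibly with $\beta_0'\ne\beta_0$. But then the displayed inequality $d_\C(\beta_0',\phi(\beta_0'))\le\sum_{i=1}^{m}d_\C(\beta_{i-1},\beta_i)$ is not a triangle inequality for your chain: the chain runs from $\beta_0$ to $\beta_m$, not from $\beta_0'$, and if $\beta_0'\ne\beta_0$ the bound is lost. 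The parenthetical "$d_\C(\beta_0,\beta_0')\le m$, route through the same chain" does not close this; in fact $\beta_0,\beta_0'$ are disjoint curves in $\Sigma_0$ so $d_\C(\beta_0,\beta_0')\le 1$, but that still yields a bound of $m+1$ rather than $m$.

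The paper sidesteps this by \emph{defining} $\alpha_m:=\phi(\alpha_0)$ for a fixed essential $\alpha_0\subset S\cap\Sigma_0$ and choosing the intermediate $\alpha_i$ freely. This is legitimate because the one-step estimate holds for \emph{arbitrary} essential curves on the two sides of a single essential saddle: an essential curve not on the boundary of the saddle component persists through the interval and is hence disjoint from any curve on the other side, while two boundary curves of the saddle component lying on opposite sides are disjoint up to isotopy after the surgery. This is precisely the paper's observations (a) and (b), and it applies just as well to the pre-specified $\alpha_m=\phi(\alpha_0)$ as to any other choice. It also dissolves your worry about all essential curves being destroyed "at the critical page": the argument never consults the critical page itself, only the non-critical pages on either side, where perfectness supplies essential curves. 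Lemma~\ref{trivial} plays no role in this step.
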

\begin{proof}
Let $S$ be the preimage of $F$ under the quotient map $q: \Sigma\times [0,2\pi]\to X_L$ mapping $\Sigma\times\{\theta\}$ to $\Sigma_\theta$ in the natural way. For simplicity, we will not distinguish $\Sigma\times\{\theta\}$ from $\Sigma_\theta$. We can assume that $\Sigma_0$ is transverse to $F$ (after slightly rotating the pages, if necessary).
	
Assume $F$ is a torus. Since $F$ is perfect, there exists a simple closed curve $\alpha\subset S\cap \Sigma_0$ that is essential in $\Sigma_0$. Notice that there are no essential saddle tangencies in $F$ because $\chi(F)=0$. Therefore, the isotopy type of $\alpha$ does not change at all in $S\cap \Sigma_{\theta}$ from $\Sigma_0$ to $\Sigma_{2\pi}$. Hence, there exists a curve $\beta\subset S\cap \Sigma_{2\pi}$ that is isotopic to $\alpha\subset S\cap \Sigma_{0}$ (when both are regarded in $\Sigma$). Since $F$ is a properly embedded surface in $X_L$, it follows that either $\beta=\phi(\alpha)$ or $\beta\cap \phi(\alpha)=\emptyset$. Respectively, we get either $\alpha=\phi(\alpha)$ or $\alpha\cap \phi(\alpha)=\emptyset$ up to isotopy. In both cases, we obtain $d_\C(\alpha,\phi(\alpha))\le 1$, and hence $d_\C(\phi)\le 1$.

Now assume that $F$ has genus $g\ge 2$. Note that the isotopy classes of essential simple closed curves in $F\cap \Sigma_\theta$ can only change at essential saddle tangencies. Let $0<\theta_1<\ldots<\theta_m<2\pi$ be the angles such that each essential saddle tangency of $F$ is contained in one of the $\Sigma_{\theta_i}$'s. By Lemma \ref{saddles}, we have $m \le -\chi(F)$. For $i=1,\ldots,m-1$, pick $\theta_i<t_i<\theta_{i+1}$ such that $\Sigma_{t_i}$ is transverse to $F$. Since $F$ is perfect, each transversal intersection $S\cap \Sigma_{t_i}$ contains a simple closed curve, say $\alpha_i$, that is essential in $\Sigma_{t_i}$. Furthermore, pick a simple closed curve $\alpha_0\subset S\cap \Sigma_0$ that is essential in $\Sigma_0$ and set $\alpha_m=\phi(\alpha_0)$ in $S\cap \Sigma_{2\pi}$. For $i=0,\ldots,m$, we have the following observations:
\begin{enumerate}
	\item[(a)] If $\alpha_i$ and $\alpha_{i+1}$ are in the boundary of the component of $S\cap (\Sigma\times [t_i,t_{i+1}])$ that contains the essential saddle tangency of $F$ to $\Sigma_{\theta_i}$, then $\alpha_{i+1}$ is introduced as $\alpha_i$ merges into the saddle tangency, or vice versa. Therefore, $\alpha_{i}$ and $\alpha_{i+1}$ can be isotoped to be disjoint in $\Sigma$, i.e., $d_\C(\alpha_{i},\alpha_{i+1})\le 1$. 
	\item[(b)] If one of $\alpha_{i}$ and $\alpha_{i+1}$ is not in the boundary of the component of $S\cap (\Sigma\times
	 [t_i,t_{i+1}])$ that contains the essential saddle tangency of $F$ to $\Sigma_{\theta_i}$, then it is observed in 
	 both $\Sigma_{t_i}$ and $\Sigma_ {t_{i+1}}$ since it is not
	 affected by the essential saddle tangency.  Therefore, $\alpha_{i}$ and $\alpha_{i+1}$ can be isotoped to be disjoint in $\Sigma$, i.e., $d_\C(\alpha_{i},\alpha_{i+1})\le 1$. 
\end{enumerate}
It immediately follows from the observations that 
$$d_\C(\phi)\le d_\C(\alpha_0,\phi(\alpha_0))=d_\C(\alpha_0,\alpha_m)\le \sum_{i=0}^{m-1}d_\C(\alpha_i,\alpha_{i+1})\le m=-\chi(F),$$
as desired.
\end{proof} 

Now we are ready to prove that the existence of an essential/incompressible surface in $M$ imposes an upper bound on the complexity of fibered links.\vspace{2mm}

\noindent\textit{Proof of Proposition~\ref{EssentialBound}.} Let $S\subset M$ be a closed essential surface of genus $g$ that intersects the fibered link $L$ transversely and minimally among all genus $g$ essential surfaces embedded in $M$. We have two cases.\vspace{2mm}

\noindent\textbf{Case 1.} $S\cap L$ is non-empty: In this case, $F=S\cap X_L$ is a properly embedded genus $g$ surface with meridional boundary components in $\partial X_L$ since $S$ intersects $L$ transversely. It follows from standard arguments that $F$ is incompressible in $X_L$ and it is not a $\partial$-parallel annulus.
By Lemma \ref{boundaryperfect}, $F$ is perfect in $X_L$. By Lemma \ref{meridional}, we obtain $d_\A(\phi)\le 3$ when $g(F)=g(S)=0$, and $d_\A(\phi)\le 2g+2$ when $g(F)=g(S)$ is positive.\vspace{2mm}

\noindent\textbf{Case 2.} $S\cap L$ is empty: In this case, $S$ cannot be a sphere since there exists no essential sphere in a fibered link exterior. By Lemma \ref{closedperfect}, $S$ is a perfect surface in $X_L$. It follows from Lemma \ref{closedbound} that $d_\C(\phi)\le \max \{1,2g-2\}\le 2g+2$.\v

In either case, we obtain the desired upper bounds for $d_{\AC}(\phi)$, and this completes the proof.\qed

\section{Fibered Knots in Thin Position}\label{ThinPosition}
In this section, we will prove a theorem which provides the complexity bound stated in Theorem \ref{MainTheorem} when a fibered knot $K$ does not lie on a Heegaard splitting up to isotopy. Namely, we will prove the following.

\begin{Thm}\label{ThinTheorem} Let $K\subset M$ be a fibered knot with monodromy $\phi$ and pages of genus greater than one. If $P\subset M$ is a Heegaard surface of genus $g$ such that $K$ cannot be isotoped into $P$, then 
	$$d_\A(\phi)\le\begin{cases}
	\ \ \ \ 3\ \ \ \ \ ,\text{ if $g=0$, }\\
	2g+2\,,\text{ if $g\ge 1$.}
	\end{cases}$$
\end{Thm}

In the previous section, our assumptions were strong enough to provide a meridional incompressible and perfect surface in the fibered knot exterior, which helped us execute a combinatorial argument that gives a complexity bound on the monodromy. However, there exist fibered knots which contain no incompressible surfaces in their exterior (namely, small knots). In this section, we use thin position and double sweepout arguments to provide a meridional surface that behaves similarly to perfect surfaces. Such a surface will reveal itself as a level surface for a thin position of $K$ with respect to a sweepout of the Heegaard surface $P$ (see below for definitions). The techniques we use here are similar to those in \cite{BachmanSchleimer}, \cite{GordonLuecke}, and \cite{Li}. Before proving the theorem, we will introduce literature, notation, and some useful lemmas. The proof of the theorem will be presented at the end of this section. \vspace{2mm}

\noindent\textbf{Assumption.} For the rest of this section, assume that $K\subset M$ is a fibered knot with monodromy $\phi$, which cannot be isotoped into the given Heegaard surface $P$.

\subsection{Sweepouts of Heegaard splittings.} A \emph{spine} of a handlebody $U$ is a connected graph $G$ in $U$ such that $U\setminus G$ is homeomorphic to $\partial U\times (0,1]$. Let $P$ be a Heegaard surface bounding a pair of hendlebodies $(U,V)$ in $M$. Let $G_U$ and $G_V$ be spines of $U$ and $V$, respectively. Then $M\setminus (G_U\cup G_V)$ is homeomorphic to $P\times (0,1)$. A \emph{sweepout} of the Heegaard surface $P$ is a smooth function $H: P\times I\to M$ such that $H(P\times \{0\})= G_U$, $H(P\times \{1\})= G_V$, and $H(P\times \{t\})$ is isotopic to $P$ for any $t\neq 0,1$. For simplicity, we will denote $H(P\times \{t\})$ by $P_t$, and we will not distinguish $H(P\times(0,1))$ from $P\times (0,1)$. On the other hand, a \emph{height function} of $P$ is the map from $h:P\times (0,1)\to (0,1)$, which maps $P_t$ to $t$.

\subsection{Thin position.} Thin position for knots was invented by Gabai \cite{Gabai} and applied in many places in the three-manifolds literature. For convenience, we recall the definition of a thin position. Fix a sweepout of $P$ in $M$ with height function $h$. By an isotopy of $K$, we may assume that $K\cap (G_U\cup G_V)=\emptyset$ and that $h|_K$ is a Morse function, i.e., $h|_K$ has only finitely many non-degenerate critical values $a_1,\ldots, a_n$ such that $K$ has a unique tangency to each $P_{a_i}$. Given such a Morse position of $K$, let $t_1,\ldots,t_{n-1}\in (0,1)$ be non-critical values of $h|_K$ such that $a_i<t_i<a_{i+1}$ for each $i=1,\ldots,n-1$. We call the number $\Sigma_{i=1}^{n-1} |P_{t_i}\cap K|$ the \emph{width} of the Morse position. A \emph{thin position} of $K$ with respect to the Heegaard splitting $P$ is then a Morse position with the minimal width. In a thin position of $K$ with respect to the Heegaard surface $P$, for each non-critical value $t$ of $h|_K$, the Heegaard surface $P_t$ intersects a tubular neighborhood $N(K)$ of $K$ in meridional disks. In other words, $F_t=P_t\setminus \mathring{N}(K)$ is a meridional surface in $X_K$ and we call it a \emph{level surface}.

Now let $a<b$ in $(0,1)$ such that $a$ is a local minimum of $h|_K$, $b$ is a local maximum of $h|_K$, and $(a,b)$ contains no critical values of $h|_K$. The family $\{F_t\,|\,t\in (a,b)\}$ is called a \emph{middle slab}. We will analyze the intersection of the pages $\Sigma_\theta$ with the levels $F_t$ in a middle slab to introduce a useful meridional level surface $F_s$ in $X_K$. \vspace{2mm}

\noindent\textbf{Assumption.} For the rest of this section, assume that $K$ is in thin position with respect to a fixed sweepout $\{P_t\,|\,t\in [0,1]\}$ of the given Heegaard surface $P$ and fix a middle slab $\{F_t\,|\,t\in (a,b)\}$. 

\subsection{Intersection graphics of surface families} \label{Cerf}
By standard generecity arguments, one can isotope the pages $\Sigma_\theta$ in $X_K$ so that they are \emph{standard} with respect to level surfaces $F_t$ near $\partial X_K$, i.e.,
\begin{itemize}
	\item there exists a page $\Sigma_{\theta}$ which hangs up (respectively hangs down) near the local minimum $a$ (respectively the local maximum $b$) of $K$ with respect to $P_t$, and
	\item away from the local minimums and local maximums of $K$, the pages $\Sigma_{\theta}$ are transverse to $\partial X_K$.
\end{itemize}
For local pictures that represent the conditions above, see Figures 4 and 5 in \cite{GordonLuecke}. Furthermore, by Cerf theory \cite{Cerf}, the pages $\Sigma_\theta$ can be further isotoped so that the pages $\Sigma_{\theta}$ and the level surfaces $F_t$ of the middle slab are in \emph{Cerf position}, that is, the set
$$\Lambda=\{(\theta,t)\in S^1\times (a,b)\,|\,\Sigma_\theta\text{ is not transverse to }F_t\}$$
is a one-dimensional graph in the open annulus $A=S^1\times (a,b)$ satisfying the following properties:
\begin{enumerate}
\item If $(\theta,t)$ is in the complement of $\Lambda$, then $\Sigma_\theta$ and $F_t$ intersect transversely in a collection of properly embedded arcs and simple closed curves (by definition of $\Lambda$).
\item If $(\theta,t)$ and $(\theta',t')$ are in the same connected component of $A\setminus \Lambda$, then $\Sigma_\theta\cap F_t$ and $\Sigma_{\theta'}\cap F_{t'}$ have the \emph{same} intersection pattern, i.e., the intersections $\Sigma_\theta\cap F_t$ and $\Sigma_{\theta'}\cap F_{t'}$ are isotopic in a copy of page $\Sigma$.
\item If $(\theta,t)$ is on an edge of $\Lambda$, then $\Sigma_\theta$ and $F_t$ are transverse except for a single center or saddle tangency. Moreover, the tangency type does not alter along an edge of $\Lambda$. In other words, every edge represents a center or saddle tangency.
\item For any number $t\in(a,b)$, the horizontal circle $C_t:=S^1\times\{t\}\subset A$ contains at most one vertex of $\Lambda$. Similarly, for any angle $\theta\in [0,2\pi]$, the vertical interval $I_\theta:=\{\theta\}\times (0,1)$ contains at most one vertex of $\Lambda$ (see Figure \ref{a}).
\item A vertex of $\Lambda$ is either a \emph{birth-and-death} vertex with valence 2 as in Figure \ref{b}, or a \emph{crossing} vertex with valence 4 as in Figure \ref{c}.
\item The edges of $\Lambda$ are not tangent to any horizontal circle $C_s$ or vertical interval $I_\theta$ (see Figure \ref{a}).
\end{enumerate}
\begin{figure}[h!]
	\centering
	\begin{subfigure}[b]{0.3\textwidth}
		\centering
		\def\svgwidth{0.79\linewidth}
		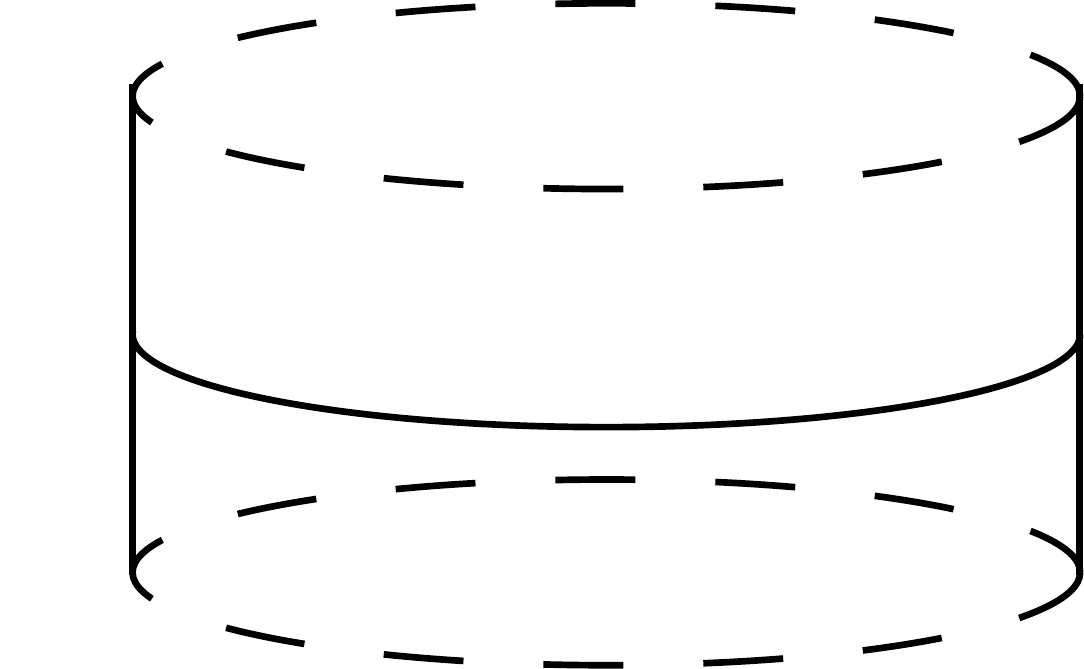
		\caption{Horizontal circles.\label{a}}
	\end{subfigure}%
	~ 
	\begin{subfigure}[b]{0.3\textwidth}
		\centering
		\def\svgwidth{0.7\linewidth}
		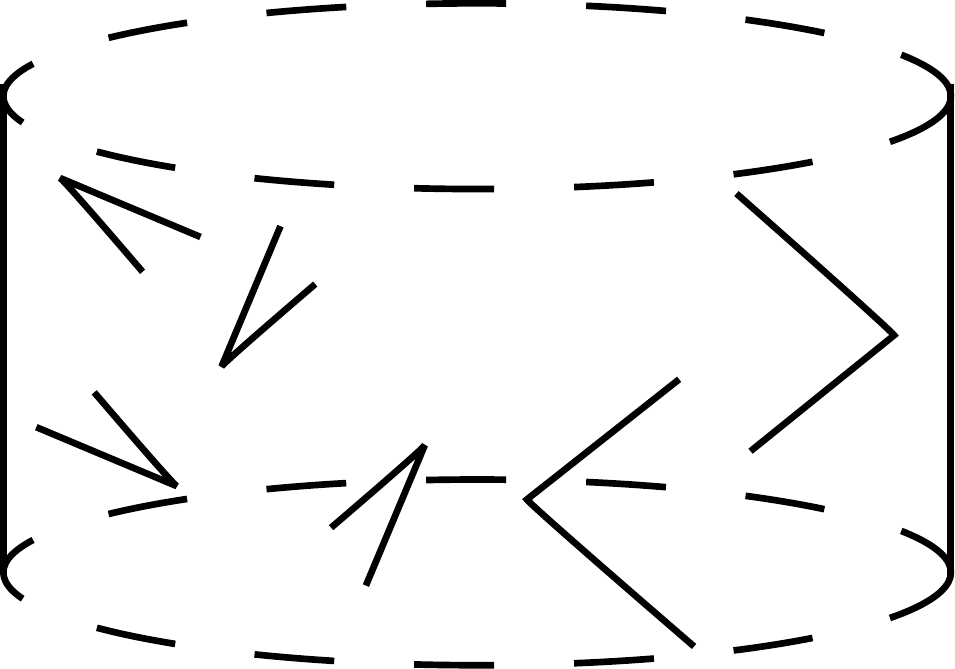
		\caption{Birth-and-death vertices.}\label{b}
	\end{subfigure}
		~ 
	\begin{subfigure}[b]{0.3\textwidth}
		\centering
		\def\svgwidth{0.7\linewidth}
		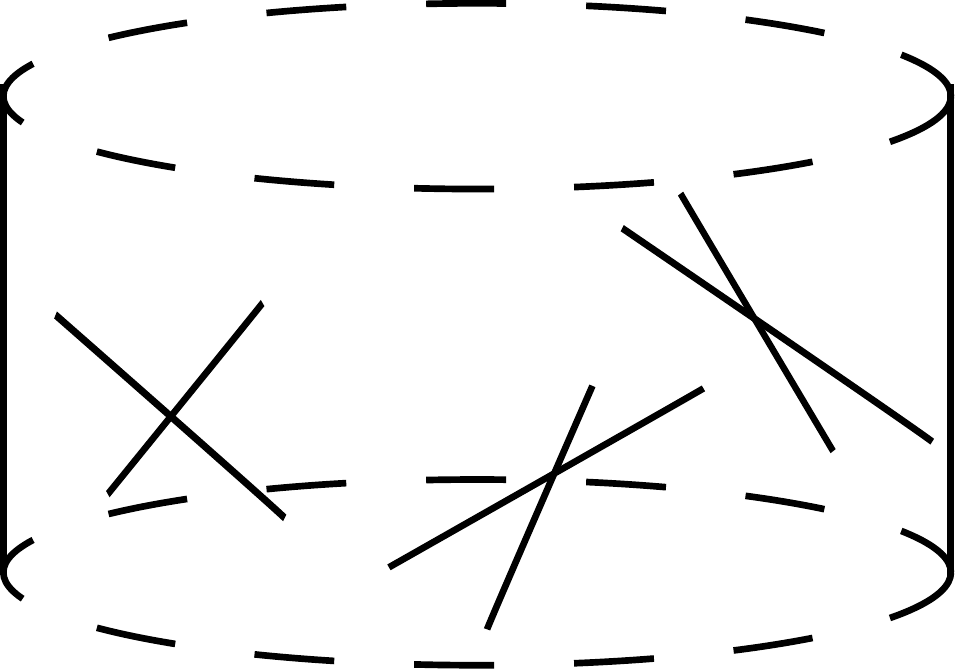
		\caption{Crossing vertices.\label{c}}
	\end{subfigure}
	\caption{Local pictures of the intersection graphic $\Lambda$.}
\end{figure}
\begin{Def}
The graph $\Lambda$ is called an \emph{intersection graphic} of the families $\Sigma_\theta$ and $F_t$. A connected component of the $A\setminus \Lambda$ is called a \emph{region} of $A\setminus \Lambda$.
\end{Def}

\noindent\textbf{Assumption.} For the rest of this section assume that the pages $\Sigma_{\theta}$ and the level surfaces $F_t$ of the middle slab are in a Cerf position providing an intersection graphic $\Lambda$ in the annulus $A$, satisfying the properties listed above.

\subsection{Labeling the levels of the middle slab}\label{labelling} Following Section 1 of \cite{GordonLuecke}, we label a level surface $F_t$ with $L$ (respectively with $H$) if there exists a page $\Sigma_\theta$ such that $\Sigma_\theta\cap F_t$ contains a properly embedded arc $\alpha\subset \Sigma_{\theta}$ that cuts off a half disk $\Delta^-$ (respectively $\Delta^+$) from $\Sigma_\theta$ such that the arc $\beta^-=\Delta^-\cap \partial \Sigma_\theta$ (respectively $\beta^+=\Delta^+\cap \partial \Sigma_\theta$) lies completely below (respectively above) $F_t$.  We say that $\Delta^-$/$\Delta^+$ is a \emph{low}/\emph{high} disk for $F_t$. Notice that, in the definition, the interior of $\Delta^\pm$ is allowed to include circles from the intersection of $\Sigma_\theta\cap F_t$.

One can also define the labelling for the regions of $A\setminus \Lambda$ in the following way: A region $R$ of $A\setminus \Lambda$ is labelled with $L$ (respectively with $H$) if there exists a point $(\theta,t)$ in $R$ such that an arc in $\Sigma_{\theta}\cap F_t$ cuts off a low disk (respectively a high disk) for $F_t$ from $\Sigma_\theta$.

\begin{Obs}
By properties of the Cerf position, if a region $R$ receives a label, then for every $t$, for which the horizontal circle $C_t$ meets $R$, the level $F_t$ receives the same label. Moreover, by definition of the labelling, a level $F_t$ is labelled with $L$ or $H$ if and only if there exists a region $R$ that receives the label $L$ or $H$, respectively, and meets the horizontal circle $C_t$.
\end{Obs}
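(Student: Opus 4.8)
The plan is to deduce both assertions from property (2) of the Cerf position (constancy of the intersection pattern inside a region) together with the fact that the pages are standard near $\partial X_K$. The first step is to isolate the following principle: whether an arc of $\Sigma_\theta\cap F_t$ cuts off a low disk (resp.\ high disk) for $F_t$ is determined solely by the combinatorics of $\Sigma_\theta\cap F_t$ inside $\Sigma_\theta$ and by which arcs of $\partial\Sigma_\theta$ lie below or above $F_t$, and both of these data are locally constant on each region $R$ of $A\setminus\Lambda$ — the first by property (2), the second because (after standardizing the pages near $\partial X_K$) the meridian curves $F_t\cap\partial N(K)$ merely slide monotonically, without reordering and without passing a critical point of $h|_K$, as $t$ ranges over the interval $(a,b)=$ base of the middle slab, and $A=S^1\times(a,b)$. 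Consequently the set $\mathcal{U}_L\subset A$ of points $(\theta,t)$ for which $\Sigma_\theta$ admits a low disk for $F_t$ is a union of regions of $A\setminus\Lambda$ (together with possibly some edge points of $\Lambda$), and ``$R$ carries the label $L$'' is equivalent to ``$R\subset\mathcal{U}_L$''; likewise for $H$.

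Granting this, the first assertion is immediate: if $R$ carries the label $L$ and $C_t\cap R\neq\emptyset$, then for $(\theta,t)\in C_t\cap R$ the page $\Sigma_\theta$ admits a low disk for $F_t$, so $F_t$ carries the label $L$ (and symmetrically for $H$). In the second assertion the ``if'' direction is this statement again. For ``only if'', suppose $F_t$ carries the label $L$, witnessed by a page $\Sigma_\theta$. If $(\theta,t)\notin\Lambda$ then $(\theta,t)$ lies in a region which carries the label $L$ and meets $C_t$, and we are done. If $(\theta,t)$ lies on an edge $e$ of $\Lambda$, I would show the low-disk condition persists for a page $\Sigma_{\theta'}$ with $\theta'$ close to $\theta$ on an appropriate side of $e$; then $(\theta',t)$ lies off $\Lambda$, on $C_t$, inside a region carrying the label $L$.

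The step I expect to be the main obstacle is this last local persistence claim, that resolving an isolated center or saddle tangency between $\Sigma_\theta$ and $F_t$ cannot destroy a low (resp.\ high) disk on both sides at once. It is a short case analysis in $\Sigma_\theta$ near the tangency point $p$: a center tangency is an isolated point of $\Sigma_\theta\cap F_t$ (a small circle on one side) that may be taken disjoint from the witnessing arc $\alpha$ and from the half-disk $\Delta^-$, so $\Delta^-$ survives verbatim under either resolution; a saddle tangency whose crossing point $p$ lies off $\alpha\cup\Delta^-$ leaves $\Delta^-$ intact, and if $p$ lies on $\alpha$ or inside $\Delta^-$ then the resolution compatible with the way $\alpha$ (resp.\ $\Delta^-$) smooths the crossing still produces a half-disk with the same boundary arc $\beta^-\subset\partial\Sigma_\theta$, hence still a low disk, since $\partial\Sigma_\theta$ and its position relative to $F_t$ are unaffected. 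Once this is verified, both assertions of the observation follow as above.
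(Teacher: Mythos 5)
Your proposal is essentially correct, and it fills in details the paper treats as immediate --- the Observation carries no explicit proof in the paper, only the inline appeals to ``properties of the Cerf position'' and ``definition of the labelling.'' Your decomposition (constancy of the low/high-disk condition inside each region; hence the first assertion and the ``if'' direction of the second; then a persistence argument across $\Lambda$ for the ``only if'' direction) is exactly the reasoning the author is implicitly invoking, and the persistence step you isolate is of the same nature as what the paper does spell out later, e.g.\ Case~2 of Claim~\ref{ess}, where the author perturbs $\theta$ off the graphic and asserts that the inessential transversal arc persists.

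Two small points of precision. First, in your tangency case analysis, the subcase ``$p$ lies on $\alpha$'' is vacuous: $\alpha$ is by hypothesis a \emph{transversal} arc of $\Sigma_\theta\cap F_t$, so it is disjoint from the singular point of the tangency, and only the subcases $p\notin\alpha\cup\Delta^-$ and $p\in\inter\Delta^-$ actually occur; in both, $\alpha$, $\Delta^-$, and $\beta^-$ survive unchanged on both sides of the edge, which is cleaner than what you wrote. Second, you only treat $(\theta,t)$ on an \emph{edge} of $\Lambda$; you should also dispose of the possibility that $(\theta,t)$ is a vertex (birth--death or crossing). This is handled identically --- there are then two tangency points (or one degenerate one), both still disjoint from the transversal arc $\alpha$, so perturbing $\theta$ slightly in any direction lands in a region carrying the same label. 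Finally, your remark that the below/above status of $\beta^\pm$ is locally constant on a region is worth keeping explicit: it follows from the fact that in the middle slab $h|_K$ has no critical values, so $\partial F_t$ consists of a fixed number of meridians, and from transversality of $\partial F_t$ with $\partial\Sigma_\theta$, so that the combinatorics of $\partial\Sigma_\theta\cap\partial F_t$ cannot change without an edge of $\Lambda$ being crossed. With these adjustments the argument is complete.
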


\begin{Rem} We will see below that a level surface $F_t$ that intersects a page $\Sigma_\theta$ in arcs that are inessential in $\Sigma_{\theta}$ receives a label. Using thin position arguments, we will introduce a level surface $F_s$, which is not labelled, and therefore, has no inessential arcs of intersection with the pages. As in the proof of Lemma \ref{meridional}, such a surface will be an essential tool to execute a combinatorial argument which provides the complexity bound stated in Theorem \ref{ThinTheorem}.
\end{Rem}

Thin position arguments and the intersection properties of the pages $\Sigma_{\theta}$ and the levels $F_t$ will provide the following two lemmas which will be useful in detecting a surface $F_s$ that is not labelled.

\begin{Clm}\label{standard}For all $\delta>0$ sufficiently small, $F_{a+\delta}$ is labelled with $L$ and $F_{b-\delta}$ with $H$.
\end{Clm}
\begin{proof} Since the pages are standard  near $\partial X_K$ with respect to the level surfaces $F_t$, there exists a page $\Sigma_{\theta}$, which hangs down near the local maximum $b$. Therefore, for $t$ values sufficiently close to $b$, $\Sigma_{\theta}\cap F_t$ contains an arc that cuts off a high disk for $F_t$ from $\Sigma_{\theta}$, i.e., $F_t$ is labelled with $H$. Similarly, for $t$ values sufficiently close to $a$, the level surface $F_t$ is labelled with $L$.
\end{proof}

Since we essentially use  the same labelling as in Section 1 of \cite{GordonLuecke}, we immediately obtain the following.
\begin{Clm}\label{bothlabel}
There exists no $t\in (a,b)$ such that $F_t$ is labelled with both $L$ and $H$. In other words, every level surface $F_t$ receives at most one label. Hence, every region of $A\setminus \Lambda$ receives at most one label.
\end{Clm}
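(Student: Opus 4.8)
The plan is to argue by contradiction, in the spirit of the thin‑position argument of \cite{GorLu}. Suppose some level surface $F_t$, $t\in(a,b)$, carries both labels. By the definition of the labelling there are pages $\Sigma_{\theta_1}$ and $\Sigma_{\theta_2}$ together with a low disk $\Delta^-\subset\Sigma_{\theta_1}$ and a high disk $\Delta^+\subset\Sigma_{\theta_2}$ for $F_t$; write $\beta^-=\Delta^-\cap K$ and $\beta^+=\Delta^+\cap K$ for the subarcs of $K$ lying, respectively, entirely below and entirely above $F_t$.

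First I would arrange that $\Delta^-$ and $\Delta^+$ are disjoint. If $\theta_1\neq\theta_2$ this is automatic: distinct fibers of $p\colon M\setminus K\to S^1$ are disjoint, so $\Delta^-\subset\Sigma_{\theta_1}$ and $\Delta^+\subset\Sigma_{\theta_2}$ cannot meet. If $\theta_1=\theta_2=\theta$, then $\Delta^\pm$ are half‑disks of the single page $\Sigma_\theta$ cut off by arcs of $\Sigma_\theta\cap F_t$; choosing these arcs innermost among all arcs of $\Sigma_\theta\cap F_t$ that produce a low, respectively high, disk makes $\Delta^-$ and $\Delta^+$ disjoint, since an arc of $\Sigma_\theta\cap F_t$ crossing the interior of one of them would cut off a strictly smaller low or high disk. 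The interiors of the $\Delta^\pm$ are permitted to meet $P_t$ in circles, but, just as in \cite{GorLu}, these circles play no role in the isotopy below; if one prefers, innermost‑disk exchanges arrange $\inter\Delta^\pm\cap P_t=\emptyset$.

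Next I would use the disjoint pair $\Delta^-,\Delta^+$ to thin $K$. The low disk $\Delta^-$ guides an isotopy of $K$, supported in a regular neighborhood of $\beta^-\cup\Delta^-$, that slides $\beta^-$ across $\Delta^-$ to lie just above $P_t$; symmetrically $\Delta^+$ slides $\beta^+$ to lie just below $P_t$. Because $\Delta^-$ and $\Delta^+$ are disjoint, these two isotopies have disjoint support and can be performed simultaneously. The resulting Morse position of $K$ with respect to the sweepout $\{P_t\}$ has strictly smaller width: pushing $\beta^-$ up removes two points of $K\cap P_t$ without creating new intersections, and likewise for $\beta^+$, so reorganizing the critical values of $h|_K$ around the local minimum value $a$ and the local maximum value $b$ exactly as in \cite{GorLu} makes $\sum_i|P_{t_i}\cap K|$ decrease. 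This contradicts the standing assumption that $K$ is in thin position, so no $F_t$ receives two labels.

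Finally, the statement for regions is immediate from the Observation preceding the claim: if a region $R$ of $A\setminus\Lambda$ carried both labels, then choosing any $t$ with $C_t\cap R\neq\emptyset$ the level $F_t$ would inherit both $L$ and $H$ from $R$, contradicting what was just proved. The genuinely delicate point is the width bookkeeping in the third paragraph — checking that the simultaneous isotopy does not secretly raise $|P_s\cap K|$ at other levels $s$ and that the rearrangement of critical points produces a net decrease. This is precisely the thin‑position step carried out in \cite{GorLu}, and it transfers verbatim to the present setting because the labelling convention used here coincides with the one there; accordingly, I would present this step by reference rather than reprove it.
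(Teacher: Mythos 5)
Your proof is correct and takes essentially the same approach as the paper: the paper disposes of the first statement by citing Lemma 1.1 of Gordon–Luecke directly and then deduces the region statement via the Observation, while you unpack the underlying Gordon–Luecke width-reduction argument (including the point, implicit in the paper's citation, that the low and high disks may a priori lie in different pages) before deferring the width bookkeeping back to that source. One small simplification worth noting: when $\theta_1=\theta_2$, disjointness of the two half-disks is immediate without an innermost argument, since the arcs $\alpha^\pm$ of $\Sigma_\theta\cap F_t$ are disjoint and nesting of the half-disks would force $\beta^-\subset\beta^+$ or $\beta^+\subset\beta^-$, contradicting that one lies below and the other above $F_t$.
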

\begin{proof}
The first statement follows from  Lemma 1.1 in \cite{GordonLuecke}. It immediately follows that a region $R$ receives at most one label. Otherwise, if there is a region receiving both labels, then every level $F_t$, for which the horizontal circle $C_t$ meets $R$, receives both labels.
\end{proof}

The last two claims imply that the labels of $F_t$ change from $L$ to $H$ as $t$ increases from 0 to 1. Next we show that there must be a level surface $F_s$, which receives no label. We introduce this surface and analyze its properties in the following subsection.

\subsection{A special level.} Now we are ready to introduce a special level in the middle slab. Let 
$$s:=\textrm{sup}\{t\in (a,b)\,|\, F_t\text{ is labelled with }L\}.$$  The level surface $F_s$ will be the surface in the knot exterior which provides the complexity bound stated in Theorem \ref{ThinTheorem}. In this subsection, we will show that $F_s$ can be assumed to satisfy certain conditions towards the proof of the theorem, by proving the following.

\begin{Lem}\label{usefullevel}
For any angle $\theta$, any transversal arc in the intersection $\Sigma_\theta\cap F_{s}$ is essential in $\Sigma_ \theta$. Moreover, either Theorem \ref{ThinTheorem} holds 
or $F_s$ satisfies the following properties:
\begin{enumerate}
	\item The horizontal circle $C_s$ contains a crossing vertex of $\Lambda$.
	\item For any $\epsilon>0$, there exist numbers $s_-\in (s-\epsilon,s)$ and $s_+\in (s,s+\epsilon)$ such that $F_{s_-}$ is labelled with $L$ and $F_{s_+}$ is  labelled with $H$.
\end{enumerate}
\end{Lem}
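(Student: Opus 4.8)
\textbf{Proof proposal for Lemma \ref{usefullevel}.}

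The plan is to split the statement into its two assertions and handle them in order. For the first assertion, I would argue by contrapositive: suppose some transversal arc $\alpha \subset \Sigma_\theta \cap F_s$ is inessential in $\Sigma_\theta$, so $\alpha$ cuts off a disk $\Delta$ from $\Sigma_\theta$. After an innermost-arc reduction inside $\Delta$ (pushing $F_s$ across subdisks to remove arcs of intersection nested inside $\Delta$), I may assume $\Delta$ meets $F_s$ only in $\alpha$. Since $\Sigma_\theta$ is a page, its boundary arc $\beta = \Delta \cap \partial\Sigma_\theta$ runs along $\partial X_K$, i.e.\ along a meridian annulus of $K$; hence $\beta$ lies either entirely below $F_s$ or entirely above it (it cannot cross $F_s$, since $F_s \cap \partial X_K$ consists of meridian curves transverse to $\partial\Sigma_\theta$ and $\beta$ is an arc of $\partial\Sigma_\theta$ between two such curves — this is where the ``pages standard near $\partial X_K$'' normalization from Subsection \ref{Cerf} is used). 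Thus $\Delta$ is a low disk or a high disk for $F_s$, so $F_s$ receives the label $L$ or $H$. But by Claim \ref{bothlabel} the set of labels is monotone in $t$, and $s$ is the supremum of the $L$-labelled levels; I would check that $F_s$ itself carries neither label. If $F_s$ were labelled $L$, then by openness of the labelling regions (the Observation after Subsection \ref{labelling}) a whole neighborhood $(s-\epsilon, s+\epsilon)$ of levels would be labelled $L$, contradicting the definition of $s$ as a supremum of $L$-levels (some level just above $s$ would be $L$-labelled); if $F_s$ were labelled $H$, then again by openness a neighborhood below $s$ would be $H$-labelled, but those levels must be $L$-labelled for $t$ slightly below $s$ — and no level is labelled by both $L$ and $H$ (Claim \ref{bothlabel}), contradiction. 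So $F_s$ is unlabelled, and every transversal arc of $\Sigma_\theta \cap F_s$ is essential in $\Sigma_\theta$.

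For the ``moreover'' part, I would argue that if either conclusion (1) or (2) fails, then Theorem \ref{mainthin} already holds. First consider the negation of (2): for some $\epsilon > 0$, either no level in $(s-\epsilon, s)$ is labelled $L$, or no level in $(s, s+\epsilon)$ is labelled $H$. In the first subcase, by Claim \ref{standard} and monotonicity there is some $L$-labelled level below $s - \epsilon$; but combined with the supremum definition of $s$ and Claim \ref{bothlabel}, one deduces there is an interval of unlabelled levels accumulating at $s$ from below, and in fact an open interval of levels each of which is an unlabelled surface whose arcs of intersection with every page are essential — running the combinatorial counting argument of Lemma \ref{meridional} (the level surface $F_t$ is a meridional surface in $X_K$ of genus at most $g$, with essential arcs in every non-critical page, so Lemma \ref{saddles} bounds its essential saddles and the arc-count gives $d_\A(\phi) \le 3$ if $g=0$ and $d_\A(\phi)\le 2g+2$ if $g\ge 1$) yields exactly the bound of Theorem \ref{mainthin}. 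The second subcase is symmetric, reflecting the slab. Next consider the negation of (1): $C_s$ contains no crossing vertex. Then $C_s$ meets $\Lambda$ only in edges and possibly birth-and-death vertices, so the intersection pattern $\Sigma_\theta \cap F_s$ varies only by trivial bigon births/deaths as $\theta$ moves around $S^1$ at height exactly $s$; moreover by property (4) of the Cerf position only one vertex sits on $C_s$, and by property (6) edges of $\Lambda$ are never tangent to $C_s$, so $F_{s}$ is a genuine regular (transverse away from finitely many saddle/center tangencies) meridional surface whose every transversal arc in every page is essential — i.e.\ $F_s$ is \emph{perfect} in the sense of Definition \ref{perfect}. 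Applying Lemmas \ref{saddles} and \ref{meridional} to $F_s$ then gives the desired complexity bound and Theorem \ref{mainthin} holds.

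The main obstacle I anticipate is the careful bookkeeping in the ``moreover'' part: precisely extracting, from the failure of (1) or (2), an honest regular perfect level surface (or an open family of unlabelled levels) to which Lemma \ref{meridional} applies, while controlling the tangency structure of $F_s$ and $\Sigma_\theta$ at the critical height $s$ — in particular ruling out that a center tangency or a birth-and-death vertex on $C_s$ secretly introduces an inessential arc that the labelling argument missed. The interplay between ``$F_s$ unlabelled'' (a statement about half-disks) and ``every arc essential'' (a statement about full arcs) needs the observation that an inessential arc of $\Sigma_\theta \cap F_s$, after innermost reduction, always produces a low or high disk, which is exactly what the first paragraph establishes; getting the logical dependencies between the two parts stated cleanly, and making sure the combinatorial bound invoked matches the genus-$0$ versus genus-$\ge 1$ cases of Theorem \ref{mainthin}, is the delicate point.
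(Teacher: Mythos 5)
Your treatment of the first assertion (that every transversal arc of $\Sigma_\theta\cap F_s$ is essential) is essentially correct and follows the paper's route: show $F_s$ is unlabelled via the supremum and openness of labels plus Claim~\ref{bothlabel}, and observe that an inessential arc produces (by passing to an outermost arc of $\Delta\cap F_s$ rather than isotoping $F_s$, which would change the level) a low or high disk, hence a label. That matches Claims~\ref{nolabels} and \ref{ess}.

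The gap is in your handling of the negation of conclusion~(1), the case where $C_s$ contains a birth-and-death vertex. You assert that $F_s$ is then still ``a genuine regular (transverse away from finitely many saddle/center tangencies) meridional surface'' and apply Lemma~\ref{meridional} directly to $F_s$. This is false: a birth-and-death vertex at $(\psi,s)\in\Lambda$ is by definition a degenerate (cusp) tangency of $F_s$ with the page $\Sigma_\psi$, which is neither a center nor a saddle, so $F_s$ fails property~(3) of Definition~\ref{perfect}'s regularity and Lemmas~\ref{saddles} and \ref{meridional} cannot be invoked for $F_s$ itself. The paper's Claim~\ref{crossing} instead runs a case analysis on whether the two edges at the vertex lie above $C_s$, below it, or one on each side: two of the three configurations are outright impossible because they would force an interval of unlabelled levels immediately below $s$ (contradicting Claim~\ref{lbelow}), and the remaining one produces an interval of unlabelled levels above $s$, from which one chooses a vertex-free $C_{s'}$ and applies Claim~\ref{vertex} to $F_{s'}$, not to $F_s$.

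Your treatment of the negation of conclusion~(2) is also slightly off. The two subcases are not symmetric: the ``no $L$-labelled level in $(s-\epsilon,s)$'' subcase is outright impossible given that $s$ is the supremum of $L$-labelled parameters and $F_s$ is unlabelled (this is Claim~\ref{lbelow}); it does not, as you claim, yield an interval of unlabelled levels accumulating at $s$ from below. Only the ``no $H$-labelled level in $(s,s+\epsilon)$'' subcase survives, and there one combines it with the absence of $L$-labels above $s$ to get an interval of unlabelled levels above $s$, then picks $s'$ in that interval with $C_{s'}$ vertex-free (possible since $\Lambda$ has finitely many vertices) and applies Lemma~\ref{meridional} to $F_{s'}$ — again to a nearby level, never to $F_s$ at a height where a vertex sits on $C_s$.
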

We will prove Lemma \ref{usefullevel} at the end of this subsection. First, we present a discussion that provides a sequence of claims that are used in the proof of the lemma.
\begin{Clm}
The number $s$ equals neither $a$ nor $b$.
\end{Clm}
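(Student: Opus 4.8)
The claim to prove is that $s \neq a$ and $s \neq b$, where $s = \sup\{t \in (a,b) \mid F_t \text{ is labelled with } L\}$.

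\textbf{Plan of the proof.} The strategy is to rule out each of the two degenerate possibilities $s=a$ and $s=b$ separately, using Claim~\ref{standard} together with the openness of the labelling condition coming from the Cerf position. First I would observe that the set of $t \in (a,b)$ for which $F_t$ is labelled with $L$ is a nonempty \emph{open} subset of $(a,b)$: nonempty because Claim~\ref{standard} gives that $F_{a+\delta}$ is labelled with $L$ for all small $\delta>0$, and open because, by the Observation in Section~\ref{labelling}, $F_t$ carries the label $L$ precisely when the horizontal circle $C_t$ meets a region $R$ of $A\setminus\Lambda$ labelled with $L$, and the set of heights $t$ whose circle $C_t$ meets a fixed open region is itself open (the edges of $\Lambda$ are nowhere tangent to the horizontal circles, by property~(6) of the Cerf position, so regions vary continuously in $t$). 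Likewise the set of $t$ for which $F_t$ is labelled with $H$ is open, and by Claim~\ref{bothlabel} these two open sets are disjoint.

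\textbf{Ruling out $s=b$.} If $s=b$, then by definition of supremum there are values of $t$ arbitrarily close to $b$ with $F_t$ labelled with $L$. But Claim~\ref{standard} says $F_{b-\delta}$ is labelled with $H$ for all sufficiently small $\delta>0$. Hence for small enough $\delta$ the surface $F_{b-\delta}$ would be labelled with both $L$ and $H$, contradicting Claim~\ref{bothlabel}. Therefore $s<b$.

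\textbf{Ruling out $s=a$.} If $s=a$, then $F_t$ is not labelled with $L$ for any $t\in(a,b)$ (since $s$ is the supremum of such $t$ and the $L$-labelled set is open, if it were nonempty it would contain points $>a$). This directly contradicts Claim~\ref{standard}, which produces $L$-labelled levels $F_{a+\delta}$ for all small $\delta>0$. Hence $s>a$, and combining with the previous paragraph, $s\in(a,b)$, as claimed.

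\textbf{Main obstacle.} None of the three steps is deep; the only point requiring a little care is the openness assertion, i.e. that the set of heights at which a level receives a given label is open in $(a,b)$. This is where properties~(2) and~(6) of the Cerf position are used: property~(6) guarantees that as $t$ varies slightly the combinatorial region structure seen by $C_t$ is locally constant away from the finitely many heights containing a vertex, and property~(2) guarantees the intersection pattern — hence the existence of a low or high disk — is constant on each region. So the "hard part" is really just making sure the bookkeeping with the intersection graphic is invoked correctly; once that is granted, the claim is an immediate consequence of Claims~\ref{standard} and~\ref{bothlabel}.
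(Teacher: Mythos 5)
Your proof is correct and follows essentially the same route as the paper: $s>a$ because Claim~\ref{standard} shows the $L$-labelled set is nonempty (and it is a subset of $(a,b)$, so any element is already $>a$), and $s<b$ by combining Claim~\ref{standard} near $b$ with Claim~\ref{bothlabel} to derive a contradiction. The openness digression is harmless but unnecessary for this claim — nonemptiness alone gives $s>a$ since every element of the $L$-labelled set lies in $(a,b)$; openness only becomes relevant in the later claims about $F_s$ itself.
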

\begin{proof}
We immediately obtain $a<s$ from Claim \ref{standard} because for sufficiently small $\delta$ values, $F_{a+\delta}$ is labelled with $L$. On the other hand, assume for a contradiction that $s\ge b$, hence $s=b$. Then there exist $t$ values arbitrarily close to $b$ such that $F_t$ are labelled with $L$. By Claim \ref{standard}, such levels are labelled with $H$ as well, which is impossible by Claim \ref{bothlabel}.
\end{proof}

\begin{Clm}\label{nolabels}
The level surface $F_{s}$ is not labelled.
\end{Clm}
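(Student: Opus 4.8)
The plan is to combine the supremum definition of $s$ with the fact that, thanks to the Cerf position structure, the set of levels carrying a given label is \emph{open}. First I would isolate this topological fact: for $X\in\{L,H\}$ write $T_X=\{t\in(a,b)\mid F_t\text{ is labelled with }X\}$, and claim each $T_X$ is open in $(a,b)$. By the Observation at the end of Subsection~\ref{labelling}, $t\in T_X$ precisely when some region $R$ of $A\setminus\Lambda$ carrying the label $X$ meets the horizontal circle $C_t$. Since $R$ is an open subset of the annulus $A$ and $R\cap C_t\neq\emptyset$, $R$ contains a point $(\theta_0,t)$, hence contains $(\theta_0,t')$ for all $t'$ in a neighbourhood of $t$; for those $t'$ the same region $R$ meets $C_{t'}$, so $t'\in T_X$. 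Thus $T_L$ and $T_H$ are open, and by Claim~\ref{bothlabel} they are disjoint.

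Next I would record the ingredients already available: by the previous claim $a<s<b$, so $C_s$ is a genuine horizontal circle of $A$; by Claim~\ref{standard}, $T_L\neq\emptyset$ (it contains $a+\delta$ for small $\delta$); and by definition $s=\sup T_L$, so $T_L\subseteq(a,s]$. Now I would argue in two steps. \emph{Step 1: $F_s$ is not labelled with $L$.} If $s\in T_L$, then openness of $T_L$ gives an $\epsilon>0$ with $(s-\epsilon,s+\epsilon)\subseteq T_L$, so $s+\epsilon/2\in T_L$ with $s+\epsilon/2>s=\sup T_L$, a contradiction. \emph{Step 2: $F_s$ is not labelled with $H$.} Suppose $s\in T_H$. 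Openness of $T_H$ gives $\epsilon>0$ with $(s-\epsilon,s+\epsilon)\subseteq T_H$. On the other hand, since $s=\sup T_L$ and (by Step 1) $s\notin T_L$, there is $t'\in T_L$ with $s-\epsilon<t'<s$; then $t'\in T_L\cap T_H$, contradicting Claim~\ref{bothlabel}. Combining Steps 1 and 2, $F_s$ receives no label.

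I do not expect a genuine obstacle here; the argument is essentially a one-line manipulation of a supremum once the right setup is in place. The only point that needs mild care is the openness statement in the first paragraph, which hinges on two things being stated precisely: that the regions of $A\setminus\Lambda$ are open subsets of $A$, and that the Observation characterizes "$F_t$ labelled with $X$" exactly as "some label-$X$ region meets $C_t$". Both are already in place in the excerpt, so the proof should go through cleanly.
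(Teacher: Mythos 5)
Your proof is correct and follows essentially the same route as the paper's: the paper argues that a low (resp.\ high) disk for $F_s$ ``persists'' for nearby $t$, which is exactly the openness of $T_L$ and $T_H$ that you extract from the Observation, and both proofs then finish identically --- the supremum property kills $L$, and Claim~\ref{bothlabel} together with the supremum kills $H$. Your reformulation in terms of the open sets $T_L,T_H$ is a slightly cleaner way of packaging the persistence argument, but the content is the same.
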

\begin{proof}
Assume that $F_{s}$ is labelled with either $L$ or $H$. We show that both cases lead to a contradiction. \vspace{2mm}

\noindent\textbf{Case 1.} $F_{s}$ is labelled with $L$: In this case, a transversal arc of intersection in $\Sigma_{\theta}\cap F_s$ that bounds a low disk for $F_{s}$ in $\Sigma_{\theta}$ persists in the intersection $\Sigma_\theta \cap F_t$ for any $t\in (s-\epsilon,s+\epsilon)$, for $\epsilon>0$ sufficiently small. Therefore, for every number $t\in (s,s+\epsilon)$, the level $F_{t}$ receives the label $L$. But this contradicts that $s$ is the supremum. \vspace{2mm}
	
\noindent\textbf{Case 2.} $F_{s}$ is labelled with $H$: In this case, a transversal arc of intersection in $\Sigma_{\theta}\cap F_s$ that bounds a high disk for $F_{s}$ in $\Sigma_{\theta}$ persists in $\Sigma_\theta \cap F_t$ for any $t\in (s-\epsilon,s+\epsilon)$, for $\epsilon>0$ sufficiently small. Therefore, for any $t\in (s-\epsilon, s)$, the level $F_t$ receives the label $H$. Since $F_s$ is not labelled with $L$ (by the previous case), there exists a number $t\in (s-\epsilon,s)$ such that $F_t$ receives the label $L$ as well. However, this contradicts Claim \ref{bothlabel}.
\end{proof}

\begin{Clm}\label{lbelow}
For every $\epsilon>0$, there exists $t\in(s-\epsilon,s)$ such that $F_t$ is labelled with $L$.
\end{Clm}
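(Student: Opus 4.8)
The plan is to read off the claim directly from the definition of $s$ as a supremum, together with two facts already in hand. First I would set $S=\{t\in(a,b)\mid F_t\text{ is labelled with }L\}$ and observe that $S$ is non-empty: by Claim \ref{standard}, $F_{a+\delta}$ is labelled with $L$ for all sufficiently small $\delta>0$, so $a+\delta\in S$. Since $S\subset(a,b)$ is bounded above by $b$, the supremum $s=\sup S$ exists and is a real number in $(a,b)$ (the fact that $s\neq a,b$ is the content of the earlier claim, but for this statement we only need $s$ finite). By the preceding claim (Claim \ref{nolabels}), $F_s$ receives no label, so in particular $s\notin S$.

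Now fix $\epsilon>0$. If the interval $(s-\epsilon,s]$ contained no point of $S$, then $S$ would be contained in $(a,s-\epsilon]$, forcing $\sup S\le s-\epsilon<s$, a contradiction. Hence $(s-\epsilon,s]\cap S\neq\emptyset$, and since $s\notin S$ this gives $(s-\epsilon,s)\cap S\neq\emptyset$; that is, there is some $t\in(s-\epsilon,s)$ with $F_t$ labelled with $L$, as claimed. This is a purely order-theoretic argument, so there is no real obstacle here — the only inputs are the non-emptiness of $S$ (Claim \ref{standard}) and the fact that the supremum itself is not attained (Claim \ref{nolabels}), both of which are established above; the analogous statement for $H$ above $s$ will follow by the same reasoning once combined with Claim \ref{bothlabel}.
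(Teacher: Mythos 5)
Your argument is correct and is essentially the same as the paper's: both read the claim off the definition of $s$ as a supremum, using Claim \ref{standard} to ensure the set of $L$-labelled parameters is non-empty and Claim \ref{nolabels} to ensure the supremum is not attained. You have merely spelled out the order-theoretic details that the paper compresses into a single sentence.
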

\begin{proof}
Since $s$ is the supremum of $L$-labelled levels and $F_s$ is not labelled (by Claim \ref{nolabels}), parameters of the $L$-labelled levels must be arbitrarily close to $s$.
\end{proof}

\begin{Clm}\label{ess}
For any angle $\theta$, transversal arcs in $\Sigma_\theta\cap F_{s}$ are essential in $\Sigma_ \theta$.
\end{Clm}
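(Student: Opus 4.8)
\noindent\emph{Proof proposal.} The plan is to argue by contradiction: assuming $\Sigma_\theta\cap F_s$ contains an inessential transversal arc for some angle $\theta$, I will manufacture a low or high disk for $F_s$ out of it, which forces $F_s$ to carry a label and contradicts Claim~\ref{nolabels}. So suppose $\alpha\subset\Sigma_\theta\cap F_s$ is a transversal arc that cuts off a half-disk from $\Sigma_\theta$.

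\textbf{Step 1 (Normalization).} Consider the collection of all disks $D\subset\Sigma_\theta$ with $\partial D=a\cup b$, where $b\subset\partial\Sigma_\theta$ and $a$ is an arc contained in $\Sigma_\theta\cap F_s$; this collection is non-empty since $\alpha$ already cuts off such a $D$. Choose $D$ minimal under inclusion. Then $\mathrm{int}(D)$ meets $\Sigma_\theta\cap F_s$ only in circles, because an arc-component inside would cut off a strictly smaller member of the collection; circles are harmless for the definition of low and high disks. More importantly, I claim $\mathrm{int}(b)$ is disjoint from $\partial F_s$. Indeed, since $\partial\Sigma_\theta$ and $\partial F_s$ both lie on $\partial N(K)$, a point $q\in\mathrm{int}(b)\cap\partial F_s$ would be an endpoint of some arc-component $\gamma$ of $\Sigma_\theta\cap F_s$ with $\gamma\neq a$ (the endpoints of $a$ are the endpoints of $b$, not interior points of $b$). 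Because $q$ lies in the interior of the boundary arc $b$, the germ of $D$ at $q$ coincides with that of $\Sigma_\theta$, so $\gamma$ enters $\mathrm{int}(D)$; being disjoint from $a$, the arc $\gamma$ can leave $D$ only through $\mathrm{int}(b)$, hence $\gamma\subset D$ and $\gamma$ cuts off a strictly smaller member of the collection, contradicting minimality.

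\textbf{Step 2 (Producing a label).} By Step 1, $\mathrm{int}(b)$ is a sub-arc of $\partial\Sigma_\theta$ missing $\partial F_s=P_s\cap\partial N(K)$; since $\partial N(K)\cap P_s=\partial F_s$, this means $\mathrm{int}(b)$ is disjoint from $P_s$, hence contained in a single complementary component of $M\setminus P_s$, while the endpoints of $b$ lie on $F_s$. Thus $b$ lies entirely below $F_s$ or entirely above $F_s$, so $D$ is a low disk or a high disk for $F_s$, witnessed by the page $\Sigma_\theta$. By definition $F_s$ is labelled with $L$ or $H$, contradicting Claim~\ref{nolabels}; hence every transversal arc in $\Sigma_\theta\cap F_s$ is essential. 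When $(\theta,s)$ lies on the intersection graphic $\Lambda$, the tangency is interior because the pages are standard near $\partial X_K$; one first runs the argument at a nearby non-critical angle $\theta'$, where the transversal arcs of $\Sigma_\theta\cap F_s$ and their isotopy classes in $\Sigma$ persist, and then transports the conclusion back.

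\textbf{Main obstacle.} The only delicate point is Step~1: an arbitrary inessential transversal arc need not cut off a low or high disk, since its boundary sub-arc on $\partial N(K)$ may oscillate across $P_s$, and the minimization is precisely what removes this oscillation. Everything else is immediate from the definition of the labelling and from the fact, established in Claim~\ref{nolabels} (which uses that $s$ is the supremum of the $L$-labelled parameters), that $F_s$ receives no label.
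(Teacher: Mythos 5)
Your proof is correct and follows essentially the same route as the paper's: assume for contradiction there is an inessential transversal arc, reduce to an outermost (in your phrasing, inclusion-minimal) half-disk cut off by an arc of $\Sigma_\theta\cap F_s$, observe that its boundary sub-arc on $\partial\Sigma_\theta$ avoids $\partial F_s$ and hence lies entirely above or entirely below $F_s$, so that $F_s$ carries a label, contradicting Claim~\ref{nolabels}; then dispose of the non-transverse case by perturbing to a nearby angle $\theta'$. Your Step~1 merely spells out in more detail the outermost-arc reduction that the paper leaves implicit.
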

\begin{proof}
Assume for a contradiction that $\Sigma_\theta\cap F_{s}$ contains a transversal arc of intersection $\alpha$ that is inessential in $\Sigma_{\theta}$. We will show that $F_s$ is labelled, which contradicts  Claim \ref{nolabels}. 

\vspace{2mm}

\noindent\textbf{Case 1.} $\Sigma_{\theta}$ and $F_s$ intersect transversely: In this case, $\alpha$ cuts off a half disk $\Delta$ from $\Sigma_\theta$ such that $\Delta$ intersects $F_s$ transversely in embedded arcs and simple closed curves. Then an arc of intersection $\alpha'\subset \Delta\cap F_s\subset \Sigma_{\theta}\cap F_s$ that is outermost in $\Delta$ cuts off a half disk $\Delta'\subset \Delta$ which is a low or high disk for $F_s$ in $\Sigma_\theta$. This implies $F_{s}$ is labelled.

\vspace{2mm}

\noindent\textbf{Case 2.} $\Sigma_{\theta}$ and $F_s$ do not intersect transversely: In this case, $(\theta,s)$ is in the intersection graphic $\Lambda$ and we can find an angle $\theta'$ sufficiently close to $\theta$ so that
\begin{enumerate}
	\item[(i)] The point $(\theta',s)$ lies in a region of $A\setminus \Lambda$, that is, $\Sigma_{\theta'}$ is transverse to $F_s$, and
	\item[(ii)] The transversal intersection arc $\alpha$ persists in $\Sigma_{\theta'}\cap F_s$ as an inessential arc in $\Sigma_{\theta'}$.
\end{enumerate}
In other words, $\Sigma_{\theta'}$ and $F_s$ are as in the previous case. An identical argument yields a label for $F_s$.
\end{proof}

Notice that the last claim establishes the first statement in Lemma \ref{usefullevel}. Now we will introduce other claims of a different flavor to analyze the intersection graphic $\Lambda$.

\begin{Clm}\label{vertex}
If the horizontal circle $C_s\subset A$ contains no vertex of the intersection graphic $\Lambda$, then Theorem \ref{ThinTheorem} holds. 
\end{Clm}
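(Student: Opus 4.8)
The plan is to identify the special level surface $F_s$ as a \emph{perfect} meridional surface in the knot exterior $X_K$ and then invoke Lemma~\ref{meridional}, whose numerical output is precisely the bound asserted in Theorem~\ref{mainthin}. So the whole claim reduces to checking that $F_s$ meets the hypotheses of that lemma.

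First I would verify that $F_s$ is a regular surface in $X_K$ in the sense of Section~3. Recall that $s\in(a,b)$ strictly, so $C_s$ lies in the interior of the annulus $A$. By hypothesis $C_s$ contains no vertex of $\Lambda$, and by property~(6) of the Cerf position the edges of $\Lambda$ are not tangent to $C_s$; hence $C_s$ meets $\Lambda$ transversely in finitely many points, each lying in the interior of an edge. By property~(1), every page $\Sigma_\theta$ with $(\theta,s)\notin\Lambda$ is transverse to $F_s$, and by property~(3) each of the finitely many remaining pages meets $F_s$ transversely except for a single center or saddle tangency. Since the pages are standard near $\partial X_K$ with respect to the level surfaces, $\partial F_s$ and $\partial\Sigma_\theta$ are transverse for every $\theta$. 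Thus $F_s$ satisfies the three defining conditions of a regular surface. Moreover, by Claim~\ref{ess} every transversal arc of $\Sigma_\theta\cap F_s$ is essential in $\Sigma_\theta$ for every $\theta$; this is exactly the condition required for $F_s$ to be perfect in the sense of Definition~\ref{perfect} (the simple closed curve components of $\Sigma_\theta\cap F_s$ are irrelevant to perfectness of a surface with boundary).

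Next I would pin down the topological type of $F_s$. The surface $P_s$ is isotopic to $P$, hence has genus $g$. Because $s$ lies strictly inside a middle slab whose lower endpoint $a$ is a local minimum of $h|_K$, the count $|P_s\cap K|$ is constant on $(a,b)$ and is at least $2$; write it $2n$ with $n\ge 1$. Then $F_s=P_s\setminus N(K)$ is obtained from a genus $g$ closed surface by deleting $2n$ meridional disks, so $F_s$ is a connected genus $g$ surface with $2n$ meridional boundary components. Applying Lemma~\ref{meridional}: if $g=0$ then $d_\A(\phi)\le 3$ (indeed $d_\A(\phi)=0$ when $n=1$ and $d_\A(\phi)\le 3$ when $n\ge 2$), and if $g\ge 1$ then $d_\A(\phi)\le 2g+2$ (indeed $d_\A(\phi)\le 2g$ when $n=1$ and $d_\A(\phi)\le 2g+2$ when $n\ge 2$). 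In every case this is exactly the bound of Theorem~\ref{mainthin}, so the theorem holds.

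The one delicate point is the first step: the hypothesis that $C_s$ carries no vertex of $\Lambda$ must be combined with properties~(1), (3), (6) and the strict inclusion $s\in(a,b)$ to guarantee that $F_s$ is genuinely regular — that is, that only finitely many pages fail to be transverse to $F_s$ and each such page meets it in a single tangency. Once that is secured, no further work is needed: Lemma~\ref{meridional} absorbs all of the combinatorics and, importantly, does not require $F_s$ to be incompressible in $X_K$.
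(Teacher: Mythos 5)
Your proposal is correct and takes essentially the same route as the paper: you identify $F_s$ as a regular, perfect, meridional genus-$g$ surface using Claim~\ref{ess} together with the Cerf-position properties, then conclude by Lemma~\ref{meridional}. The only difference is that you spell out the regularity check (properties (1), (3), (6), finiteness of $C_s\cap\Lambda$) in more detail than the paper, which simply states that $F_s$ is in regular position; this is a legitimate expansion, not a different argument.
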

\begin{proof}
If there exists no vertex of $\Lambda$ in $C_s$, then the level surface $F_{s}$ is in regular position with respect to pages. Moreover, by Claim \ref{ess}, every transversal arc of intersection in $\Sigma_\theta\cap F_s$ is essential in $\Sigma_\theta$ for any angle $\theta$. In other words, $F_{s}$ is a meridional perfect surface in $X_K$ (see Definition \ref{perfect}), where $g(F_{s})$ equals the Heegaard genus $g$ of $M$. By Lemma \ref{meridional}, we get $d_\A(\phi)\le3$ when $g=0$, and $d_\A(\phi)\le2g+2$ when $g\ge 1$. Thus, Theorem \ref{ThinTheorem} holds. 
\end{proof}

\begin{Clm}\label{habove}
If there exists an $\epsilon>0$ such that $F_t$ is not labelled for any $t\in(s,s+\epsilon)$, then Theorem \ref{ThinTheorem} holds.
\end{Clm}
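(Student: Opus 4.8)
\textbf{Proof proposal for Claim \ref{habove}.}

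The plan is to exploit the hypothesis to produce a level surface slightly above $s$ that is unlabelled and in regular position with respect to the pages, so that Lemma \ref{meridional} applies directly — exactly as in the proof of Claim \ref{vertex}. First I would observe that if $F_t$ carries no label for every $t \in (s, s+\epsilon)$, then since $C_s$ contains at most one vertex of $\Lambda$ (property (4) of the Cerf position), I can choose $s_+ \in (s, s+\epsilon)$ small enough that the horizontal circle $C_{s_+}$ contains no vertex of $\Lambda$ at all. Hence $F_{s_+}$ is in regular position with respect to the pages: every page $\Sigma_\theta$ is either transverse to $F_{s_+}$ or meets it in a single center or saddle tangency along an edge of $\Lambda$.

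Next I would upgrade ``$F_{s_+}$ is unlabelled'' to ``every transversal arc of $\Sigma_\theta \cap F_{s_+}$ is essential in $\Sigma_\theta$''. This is precisely the content of the argument already run in Claim \ref{ess}: an inessential transversal arc of intersection cuts off a half disk $\Delta$ from $\Sigma_\theta$; an outermost arc of $\Delta \cap F_{s_+}$ inside $\Delta$ cuts off a half disk that is either a low or a high disk for $F_{s_+}$, forcing a label on $F_{s_+}$ (and when $(\theta, s_+)$ lies on $\Lambda$, perturbing $\theta$ slightly reduces to the transverse case since $C_{s_+}$ meets no vertex). Since $F_{s_+}$ receives no label by hypothesis, no such inessential arc exists. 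Therefore $F_{s_+}$ is a meridional perfect surface in $X_K$ in the sense of Definition \ref{perfect}, with $g(F_{s_+})$ equal to the Heegaard genus $g$ of $M$.

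Finally, applying Lemma \ref{meridional} to $F_{s_+}$ gives $d_\A(\phi) \le 3$ when $g = 0$ and $d_\A(\phi) \le 2g+2$ when $g \ge 1$, which is exactly the bound asserted in Theorem \ref{mainthin}. Thus Theorem \ref{mainthin} holds, as claimed. The only point requiring mild care is the reduction to the transverse case on $\Lambda$, and this is handled verbatim by the perturbation argument of Claim \ref{ess}; everything else is a direct citation of Lemma \ref{meridional}, so I do not expect a genuine obstacle here — the claim is essentially a packaging of the ``good level surface'' mechanism for the case where the good level sits just above $s$ rather than at $s$.
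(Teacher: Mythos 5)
Your proposal is correct and follows essentially the same route as the paper: pick a level $s' \in (s,s+\epsilon)$ that is both unlabelled and whose horizontal circle is vertex-free, rerun the argument of Claim \ref{ess} to show all transversal arcs of intersection are essential, and then invoke Lemma \ref{meridional} via the mechanism of Claim \ref{vertex}. One small imprecision: the fact that $C_{s_+}$ can be chosen vertex-free is not a consequence of property (4) of the Cerf position (that each horizontal circle meets at most one vertex), nor does it require $s_+$ to be ``small enough''; it follows simply because $\Lambda$ has finitely many vertices, so only finitely many $t$ have a vertex on $C_t$, and any other $t \in (s, s+\epsilon)$ will do.
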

\begin{proof}
Assume that there exists an $\epsilon>0$ such that for any $t\in (s,s+\epsilon)$, the level $F_{t}$ is not labelled. 
Then we can choose a number $s'\in (s,s+\epsilon)$ such that $F_{s'}$ receives no label and $C_{s'}$ contains no vertex of $\Lambda$. In other words, $F_{s'}$ satisfies the hypotheses of Claims \ref{ess} and \ref{vertex}. Applying identical arguments to $F_{s'}$, we deduce Theorem \ref{ThinTheorem} holds.
\end{proof}

\begin{Clm}\label{crossing}
If there exists a birth-and-death vertex on $C_s$, then Theorem \ref{ThinTheorem} holds.
\end{Clm}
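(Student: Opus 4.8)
The plan is to show that a birth-and-death vertex on $C_s$ is harmless: a small perturbation turns $F_s$ into a regular, perfect, meridional surface in $X_K$ of genus $g$, and then Theorem~\ref{mainthin} follows from Lemma~\ref{meridional} exactly as in the proof of Claim~\ref{vertex}.

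First I would record the local picture. Since $C_s$ contains at most one vertex of $\Lambda$, the birth-and-death vertex $v=(\theta_0,s)$ is the only vertex on $C_s$; hence $\Sigma_\theta$ is transverse to $F_s$ for every $\theta\neq\theta_0$, while $\Sigma_{\theta_0}$ meets $F_s$ transversally except for a single degenerate (birth-and-death) tangency at a point $x_0$. In the standard local model of a birth-and-death vertex this degenerate tangency is a cancelling center/saddle pair: for $\theta$ on one side of $\theta_0$ the intersection acquires, in a small neighborhood of $x_0$ in $\Sigma_\theta$, a trivial circle that is immediately absorbed by a saddle, while for $\theta$ on the other side there is no intersection near $x_0$ at all. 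In particular, no arc of intersection is created or destroyed near $x_0$.

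Next I would perturb $F_s$ inside a small ball $B$ centered at $x_0$ and disjoint from $\partial X_K$, resolving the degenerate tangency into a center tangency at one page and a saddle tangency at a nearby page; call the result $F'$. The perturbation is supported in $B$, so it preserves the genus $g$ and the meridional boundary $\partial F_s$, and it makes $F'$ regular in $X_K$. Moreover $B$ meets only pages $\Sigma_\theta$ with $\theta$ in a tiny interval around $\theta_0$, and there the perturbation merely inserts a trivial circle near $x_0$; hence the arc system of $\Sigma_\theta\cap F'$ agrees with that of $\Sigma_\theta\cap F_s$ for every $\theta$, and by Claim~\ref{ess} each such transversal arc is essential in $\Sigma_\theta$. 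Thus $F'$ is a regular, perfect, meridional surface in $X_K$ of genus $g$, and Lemma~\ref{meridional} yields $d_\A(\phi)\le 3$ when $g=0$ and $d_\A(\phi)\le 2g+2$ when $g\ge 1$, which is exactly the bound of Theorem~\ref{mainthin}.

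The step I expect to need the most care is the assertion that resolving a birth-and-death tangency introduces only a trivial circle in the neighboring pages — and in particular no inessential arc — since this is precisely what keeps $F'$ perfect and lets us quote Lemma~\ref{meridional} verbatim; justifying it cleanly requires pinning down the Cerf-theoretic local model of a birth-and-death vertex in terms of the curve-and-arc pattern $\Sigma_\theta\cap F_t$. (Alternatively one can argue that passing a birth-and-death vertex cannot change the label of $F_t$, because only a trivial page circle, never an arc cutting off a high or low disk, is involved; since $F_t$ is labelled $L$ for $t$ slightly below $s$ this forces the same for $t$ slightly above $s$, contradicting the choice of $s$ and making the hypothesis of the claim impossible. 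The perturbation argument above is preferable because it directly produces the stated complexity bound rather than arguing vacuously.)
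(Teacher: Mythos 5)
Your main (perturbation) argument is correct, and it takes a genuinely different route from the paper. The paper treats Claim~\ref{crossing} by a case analysis on the orientation of the two edges adjacent to the birth-and-death vertex: if one edge points up and one down, or both point up, then $C_t$ for $t$ slightly below $s$ meets exactly the same (unlabelled) regions as $C_s$, contradicting Claim~\ref{lbelow}; if both point down, the same reasoning shows $F_t$ is unlabelled for $t$ slightly above $s$, and Claim~\ref{habove} kicks in. Your argument instead notices that the birth-and-death vertex is a codimension-one degeneracy of $F_s$ alone, so a small interior perturbation of $F_s$ resolves it into a cancelling center--saddle pair (or, equally well, into nothing), producing a regular, perfect, meridional surface of the same genus to which Lemma~\ref{meridional} applies verbatim as in Claim~\ref{vertex}. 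Both routes are valid; yours is more direct and avoids the case split, while the paper's stays entirely inside the region-labelling bookkeeping already set up for the rest of Section~4.

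Two small points of care. First, your sentence ``hence $\Sigma_\theta$ is transverse to $F_s$ for every $\theta\neq\theta_0$'' is not quite right: $C_s$ may cross edges of $\Lambda$, and at such $\theta$ the page $\Sigma_\theta$ has a single center or saddle tangency to $F_s$. What the absence of other vertices actually gives is that $F_s$ already satisfies the \emph{regularity} conditions for every $\theta\neq\theta_0$, which is all your perturbation needs; after perturbation the old nondegenerate tangencies survive and the degenerate one becomes a center and an inessential saddle at two nearby pages. Second, the load-bearing assertion --- that the resolved tangencies only introduce a small trivial circle in the pages near $\theta_0$, never an arc --- is exactly the statement that the pair is cancelling, which is built into the local model of a birth-and-death; you flag this yourself, and it is worth spelling out. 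Your parenthetical alternative (``passing a birth-and-death vertex cannot change the label'') is closer in spirit to the paper's argument, but note that the paper deliberately does \emph{not} assert that labels are preserved across the two cusp edges: in its third case it does not derive a contradiction but instead invokes Claim~\ref{habove}. The perturbation argument you prefer sidesteps the need to reason about how labels propagate across the cusp at all.
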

\begin{proof}
Assume that $C_s$ contains a birth-and-death vertex. Recall that every horizontal circle in $A$ contains at most one vertex of $\Lambda$. So, away from the birth-and-death vertex, $C_s$ intersects edges of $\Lambda$ transversely. We introduce a case analysis depending on the location of the edges adjacent to the vertex on $C_s$, and we either reach a contradiction or show that Theorem \ref{ThinTheorem} holds.\vspace{2mm}

\noindent\textbf{Case 1.} One edge is above $C_s$, and the other is below: In this case, there exists an $\epsilon>0$ such that for any $t\in(s-\epsilon,s)$, the horizontal circle $C_t$ meets the same regions as $C_s$. Since $F_s$ is unlabelled by Claim \ref{nolabels}, all regions intersecting $C_s$ are unlabelled. In other words, for any $t\in(s-\epsilon,s)$, all regions intersecting $C_t$ are unlabelled. This implies that the level $F_t$ is unlabelled for any $t\in(s-\epsilon,s)$, which is impossible by Claim \ref{lbelow}. \vspace{2mm}

\noindent\textbf{Case 2.} Both edges are above $C_s$: In this case, again there exists an $\epsilon>0$ such that for any $t\in(s-\epsilon,s)$, the horizontal circle $C_t$ meets the same regions as $C_s$. Similarly, this implies that for any $t\in(s-\epsilon,s)$, $F_t$ is unlabelled, which is impossible by Claim \ref{lbelow}.\vspace{2mm}

\noindent\textbf{Case 3.} Both edges are below $C_s$: In this case, there exists an $\epsilon>0$ such that for any $t\in(s,s+\epsilon)$, the horizontal circles $C_t$ and $C_s$ meet the same regions. Similarly, $F_t$ is unlabelled for $t\in (s,s+\epsilon)$. Thus, by Claim \ref{habove}, Theorem \ref{ThinTheorem} holds.
\end{proof}

Now we are ready to prove Lemma \ref{usefullevel} and finish this subsection.\vspace{2mm}

\noindent\textit{Proof of Lemma \ref{usefullevel}.} By Claim \ref{ess}, any transversal arc of intersection in $\Sigma_\theta\cap F_s$ is essential in $\Sigma_{\theta}$.

Now assume that (1) does not hold. Then either (a) $C_s$ contains no vertex or (b) $C_s$ contains a birth-and-death vertex. In case (a), Theorem \ref{ThinTheorem} holds by Claim \ref{vertex}. In case (b), Theorem \ref{ThinTheorem} holds by Claim \ref{crossing}.

Finally, assume (2) does not hold. By Claim \ref{lbelow}, for any $\epsilon>0$, there exists $s_-\in (s-\epsilon,s)$ such that $F_{s_-}$ is labelled with $L$. Therefore, if (2) does not hold, then there exists an $\epsilon>0$ such that for any $t\in(s,s+\epsilon)$, $F_{t}$ is not labelled. Thus, by Claim \ref{habove}, Theorem \ref{ThinTheorem} holds.\qed

\subsection{Analyzing the crossing vertex.} In the previous subsection we showed that for $s=\sup\,\{t\in (a,b)\,|\, F_t \text{ is labelled with } L\}$, any transversal intersection arc in $\Sigma_\theta \cap F_s$ is essential in $\Sigma_{\theta}$. Moreover, in Lemma \ref{usefullevel}, we showed that if $F_s$ does not satisfy one of the following properties, then Theorem \ref{ThinTheorem} holds:
\begin{enumerate}
	\item The horizontal circle $C_s$ contains a crossing vertex of $\Lambda$.
	\item For any $\epsilon>0$, there exist numbers $s_-\in (s-\epsilon,s)$ and $s_+\in (s,s+\epsilon)$ such that $F_{s_-}$ is labelled with $L$ and $F_{s_+}$ is  labelled with $H$.
\end{enumerate}
Since, our ultimate goal is to prove Theorem \ref{ThinTheorem}, in this subsection, we assume that $F_s$ satisfies (1) and (2), and we analyze $F_s$ further to prove some claims that will be used in the proof of the theorem.

Let $(\psi,s)$ be the crossing vertex of $\Lambda$ that is in $C_s$. By rotating the open book, if necessary, we can assume that $\psi$ is a non-zero angle, and $\Sigma_0$ is transverse to $F_{s}$. Let $R^+$ be the region that is adjacent to the edges above $C_s$ at $(\psi,s)$, $R^-$ the region that is adjacent to the edges below $C_s$ at $(\psi,s)$. Moreover, let $R^w$ (respectively $R^e$) be the region to the west (respectively to the east) of the vertex $(\psi,s)$. Let the four edges adjacent to the vertex $(\psi,s)$ be $e_1, e_2,e_3,e_4$, as in Figure \ref{psis}.
\begin{figure}[h]
	\centering
	\def\svgwidth{0.5\linewidth}
	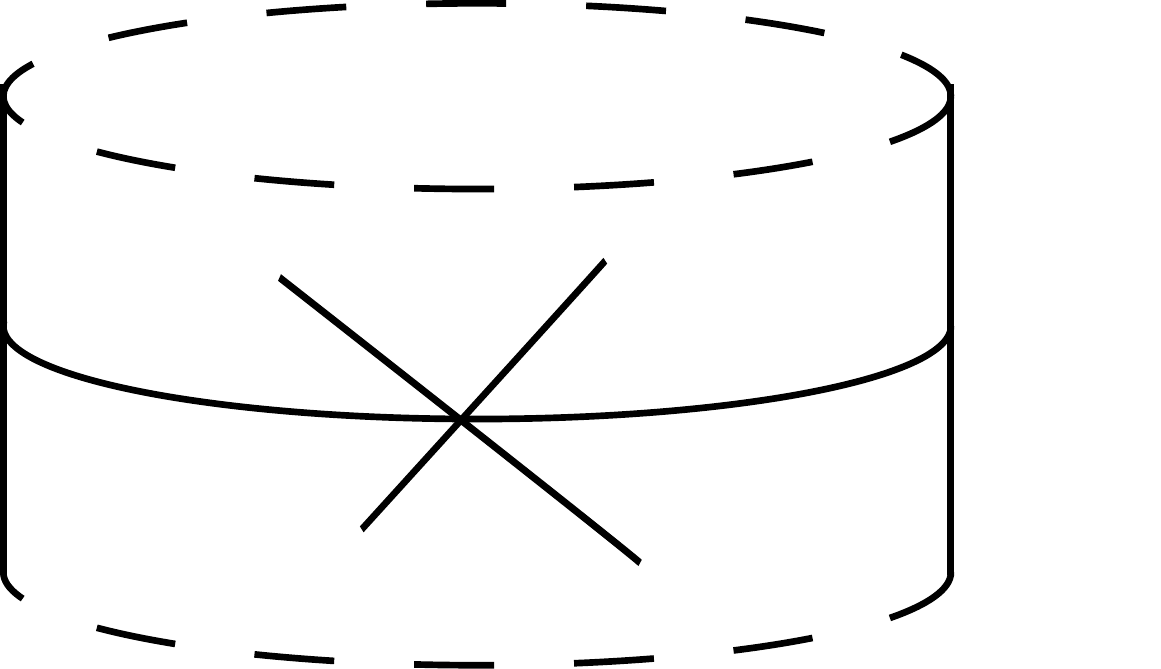
	\caption{The local picture of $\Lambda$ near the crossing vertex $(\psi,s)$.}\label{psis}
\end{figure}

\begin{Clm}\label{entangled}
The region $R^+$ is labelled with $H$ and $R^-$ is labelled with $L$. 
The surfaces $\Sigma_{\psi}$ and $F_{s}$ intersect transversely except for two saddle tangencies. Moreover, the two saddle tangencies are entangled, i.e., $\Sigma_\psi\cap F_s$ has a connected singular component containing both saddle tangencies.
\end{Clm}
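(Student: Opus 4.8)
The plan is to prove the three assertions of the claim in the order they are stated, using the labelling of the regions of $A\setminus\Lambda$ together with the local structure of $\Lambda$ near the crossing vertex $(\psi,s)$ in Figure~\ref{psis}.

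\emph{The labels of $R^\pm$.} First I would determine which regions are met by horizontal circles just above and just below $C_s$. Since $(\psi,s)$ is the only vertex of $\Lambda$ on $C_s$ and no edge of $\Lambda$ is tangent to a horizontal circle, there is an $\epsilon>0$ such that for every $t\in(s,s+\epsilon)$ the circle $C_t$ meets exactly the regions met by $C_s$ together with $R^+$, and for every $t\in(s-\epsilon,s)$ it meets exactly those met by $C_s$ together with $R^-$. By Claim~\ref{nolabels}, $F_s$ is unlabelled, so by the observation in \S\ref{labelling} every region meeting $C_s$ is unlabelled. Since we are assuming $F_s$ satisfies property~(2) (otherwise Theorem~\ref{mainthin} already holds by Lemma~\ref{usefullevel}), there exist $s_+\in(s,s+\epsilon)$ with $F_{s_+}$ labelled $H$ and $s_-\in(s-\epsilon,s)$ with $F_{s_-}$ labelled $L$. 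The $H$-label of $F_{s_+}$ must come from a region meeting $C_{s_+}$, and the only such region not already known to be unlabelled is $R^+$; hence $R^+$ is labelled $H$, and symmetrically $R^-$ is labelled $L$. By Claim~\ref{bothlabel} these are the only labels they receive.

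\emph{Two saddle tangencies.} Since $(\psi,s)$ is the unique point of $\Lambda$ with those coordinates, it is the only place where $\Sigma_\psi$ fails to be transverse to $F_s$, and being a crossing vertex of valence $4$ it records two tangencies occurring at distinct points of $\Sigma_\psi\cap F_s$, one on each of the two edges through it. The Cerf conditions (no edge is tangent to a horizontal circle or to a vertical interval) force these edges to pair the half-edges as $E_1=\{e_1,e_4\}$ and $E_2=\{e_2,e_3\}$, each carrying a constant tangency type. To see that both types are saddles, I would use that crossing an edge which carries a center tangency changes $\Sigma_\theta\cap F_t$ only by the birth or death of a simple closed curve that is trivial in both $\Sigma_\theta$ and $F_t$, which — since low and high disks may contain such circles — does not change the label of the region crossed. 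But $e_1\subset E_1$ separates $R^w$ from $R^+$, and $e_3\subset E_2$ separates $R^w$ from $R^-$, and in each case the two regions carry different labels by the previous step; hence neither $E_1$ nor $E_2$ can be a center edge.

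\emph{Entanglement.} Suppose for contradiction that the saddle tangencies $q_1\in E_1$ and $q_2\in E_2$ lie in distinct connected components $c_1\neq c_2$ of $\Sigma_\psi\cap F_s$. Then a small neighborhood of $q_1$ in $\Sigma_\psi$ misses $c_2$ and $q_2$, so switching the resolution at $q_1$ alters the intersection pattern only near $q_1$, independently of the resolution at $q_2$, and symmetrically. Index the resolutions so that $R^+$ realizes $(a_1,a_2)$; then $R^-$ realizes $(\bar a_1,\bar a_2)$, while $R^w$ and $R^e$ realize $(\bar a_1,a_2)$ and $(a_1,\bar a_2)$. Comparing $R^+$ (labelled $H$) with $R^w$ (unlabelled), which differ only in the resolution at $q_1$, the high disk for $R^+$ is produced near $q_1$; by independence it survives switching the resolution at $q_2$, so it is present for $R^e$ as well — if the $c_2$-part of the pattern intrudes into the half-disk one passes to an outermost subarc inside it, using that its boundary arc on $\partial\Sigma_\theta$ stays above $F_t$ because $q_2$ lies away from $\partial\Sigma_\theta$. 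Thus $R^e$ would be labelled $H$, contradicting that $R^e$ meets $C_s$ and is unlabelled. Hence $q_1$ and $q_2$ lie in one connected component of $\Sigma_\psi\cap F_s$.

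The step I expect to be the main obstacle is the entanglement assertion: giving a clean meaning to ``independence of the two resolutions'' when $q_1,q_2$ lie in different components, and verifying that the high (or low) disk genuinely persists after switching the other resolution, especially when the component $c_2$ runs through the half-disk and when one must control the positions of $\partial\Sigma_\theta$ and the level surfaces near the two saddles. The first two assertions are essentially bookkeeping once the labelling observation and the center-versus-saddle dichotomy are in hand.
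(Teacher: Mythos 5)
Your proposal follows the same architecture as the paper's proof and is correct. For the labels of $R^\pm$ you argue directly that the $H$-label of $F_{s_+}$ must come from a region not already known to be unlabelled, whereas the paper runs the same counting of regions met by $C_t$ as a contradiction; these are two phrasings of one argument. For the saddle-versus-center step your pairing $E_1=\{e_1,e_4\}$, $E_2=\{e_2,e_3\}$ is exactly what the Cerf conditions force, and your observation that crossing a center edge cannot change a label (only a trivial circle is born or dies, and low/high disks are allowed to contain such circles) is the content behind the paper's briefer ``the tangency must change arc types''; you test $E_2$ against the $R^w/R^-$ pair where the paper tests it against $R^+/R^e$, but either works. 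For entanglement, the paper compresses the argument into one sentence (``$e_1$ introduces inessential arcs and $e_2$ eliminates the same arcs, so they meet the same singular component''); you unwind this into the explicit contrapositive — if $q_1,q_2$ lay in distinct components then switching the $q_2$-resolution would not disturb the high disk produced at $q_1$, so $R^e$ would inherit the $H$-label, contradicting that $R^e$ meets the unlabelled circle $C_s$. Your added care about passing to an outermost subarc inside $\Delta^+$ when the $c_2$-pattern intrudes is a worthwhile detail the paper elides. The step you flagged as the main obstacle is indeed where the paper is tersest, and your handling of it is sound.
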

\begin{proof}
Assume for a contradiction that $R^-$ is not labelled. Let $R_1,\ldots, R_n$ be the regions that meet $C_s$. By properties of the intersection graphic $\Lambda$, the horizontal circle $C_s$ intersects edges of $\Lambda$ transversely away from the vertex $(\psi,s)$. Then there exists an $\epsilon>0$ such that for any $t\in (s-\epsilon,s)$, $C_t$ intersects the regions $R_1,\ldots,R_n$, and $R^-$. Since $C_s$ is not labelled, none of the regions $R_i$ are labelled. Since $R^-$ is not labelled either, it follows that $C_{t}$ meets no labelled regions. This implies that $F_t$ is not labelled for $t\in(s-\epsilon,s)$, which contradicts the assumption (2) above. On the other hand, if we assume that $R^+$ is not labelled, it follows from the same argument that there exists an $\epsilon>0$ such that $F_t$ is not labelled for $t\in(s,s+\epsilon)$, which again contradicts the assumption (2) above. 


To prove the second claim, choose $\epsilon>0$ sufficiently small so that $(\psi-\epsilon,s)\in R^w$ and $(\psi,s+\epsilon)\in R^+$. If we travel from $(\psi-\epsilon,s)$ to $(\psi,s+\epsilon)$ along the straight line between them, we cross $\Lambda$ once at the edge $e_1$. Since $R^w$ is not labelled and $R^+$ is labelled, this implies that the tangency represented by $e_1$ changes arc types in the intersections. Thus, $e_1$ must represent a saddle tangency rather than a center tangency. A similar argument implies that $e_2$ must represent a saddle tangency as well, because $R^+$ is labelled and $R^e$ is not labelled. Thus, the edges $e_1$ and $e_2$ represent two saddle tangencies between $\Sigma_\psi$ and $F_s$. Finally, observe that  the saddle tangency represented by $e_1$ introduces inessential arcs of intersection and the saddle tangency represented by $e_2$ eliminates the same inessential arcs of intersection. Thus, the two saddle tangencies meet the same singular component of $\Sigma_\psi\cap F_s$, which implies that they are entangled.
\end{proof}

\begin{Clm}\label{notclosed}
Let $G$ be the singular component of $\Sigma_\psi \cap F_s$ containing the two entangled saddles. Then $G$ meets $\partial \Sigma_\psi$.
\end{Clm}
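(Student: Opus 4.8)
The plan is to argue by contradiction: suppose $G$ is a closed singular component of $\Sigma_\psi\cap F_s$ disjoint from $\partial\Sigma_\psi$. Since $G$ contains the two entangled saddle tangencies and lies in the interior, a neighborhood of $G$ in $\Sigma_\psi$ is a component of $\Sigma_\psi\cap(F_s\times[\text{small interval}])$ bounded entirely by simple closed curves, and similarly a neighborhood of $G$ in $F_s$ is bounded entirely by simple closed curves. The key point established in Claim~\ref{entangled} is that the first saddle (edge $e_1$) \emph{introduces} inessential arcs of intersection in the pages and the second saddle (edge $e_2$) \emph{eliminates} them; but an arc of intersection must have its endpoints on $\partial\Sigma_\psi$, so if $G$ is disjoint from $\partial\Sigma_\psi$ it cannot carry any arc at all, contradicting the fact that $e_1$ and $e_2$ are detected via the labelling, which is defined using arcs cutting off half-disks.

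More carefully, first I would recall that a label on a region is witnessed by a transversal intersection \emph{arc} cutting off a low or high disk; by Claim~\ref{entangled}, $R^+$ is labelled with $H$ and $R^-$ with $L$, and the change of label across $e_1$ and across $e_2$ is realized by arcs of $\Sigma_\theta\cap F_t$ that appear or disappear at those saddles. Then I would observe that such an arc that appears at the $e_1$-saddle and disappears at the $e_2$-saddle must lie in the singular component $G$ (in the sense that, for $\theta$ slightly off $\psi$, the arc is a piece of the resolution of $G$). An arc in $\Sigma_\psi$ has both endpoints on $\partial\Sigma_\psi$, so the corresponding singular leaf $G$ through which this arc is born and dies must meet $\partial\Sigma_\psi$. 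Hence $G\cap\partial\Sigma_\psi\neq\emptyset$.

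Alternatively — and perhaps more robustly — I would run the argument on the side of the pages rather than the labels: the two saddles being entangled means the component $\widehat G$ of $F_s\cap(\Sigma\times[\psi-\epsilon,\psi+\epsilon])$ containing both saddles has Euler characteristic $\chi(\widehat G)=-2$; its boundary in $\Sigma_{\psi-\epsilon}$ and $\Sigma_{\psi+\epsilon}$ consists of the arcs/curves being merged. If $G$ were a closed singular leaf in the interior of $\Sigma_\psi$, then all boundary components of $\widehat G$ would be simple closed curves in the interiors of the neighboring pages, and the only effect of the two saddles would be to merge and re-split simple closed curves. But then the $H$-label of $R^+$ (respectively the $L$-label of $R^-$) would have to come from an arc unaffected by $\widehat G$, hence persisting across $e_1$; by the Cerf-position property this arc would then be present in $R^w$ and $R^e$ as well, making those regions labelled and contradicting Claim~\ref{entangled} (which asserted $R^w,R^e$ are unlabelled). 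So $G$ must meet $\partial\Sigma_\psi$.

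The main obstacle I anticipate is making precise the bookkeeping of exactly which arcs or curves appear and disappear as one crosses $e_1$ and $e_2$ — i.e., verifying that the arc witnessing the label change is genuinely carried by $G$ rather than by some unrelated component, and ruling out the degenerate possibility that $G$ is closed while still somehow a label change propagates through it. Handling this cleanly likely requires tracking the singular foliation of $F_s$ near $C_s$ and using that $C_s$ contains no other vertex, so that the regions $R^w,R^+,R^e,R^-$ have controlled intersection patterns differing only by the two saddles; once that local picture is fixed, the dichotomy ``$G$ closed $\Rightarrow$ $R^w$ and $R^e$ inherit the label'' versus ``$G$ meets $\partial\Sigma_\psi$'' should follow by inspection.
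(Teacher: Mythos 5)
Your proposal, especially the second ``alternative'' formulation, is essentially the paper's argument: if $G$ misses $\partial\Sigma_\psi$, the saddle at $e_1$ cannot change arc types, so the inessential labelling arc in $R^+$ would persist back to $R^w$ (equivalently, the essential arcs of $R^w$ persist forward to $R^+$), contradicting that $R^+$ is labelled while $R^w$ is not. One small slip to note: the unlabelled-ness of $R^w$ and $R^e$ is not actually asserted in the statement of Claim~\ref{entangled} but follows from Claim~\ref{nolabels} (that $F_s$ is unlabelled) together with the observation that any region meeting $C_s$ inherits $F_s$'s lack of labels.
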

\begin{proof} Assume for a contradiction that $\partial \Sigma_\psi\cap G=\emptyset$. This implies that no arcs in $\Sigma_{\psi-\epsilon} \cap F_{s}$ interact with the entangled saddles. In particular, if we travel from $(\psi-\epsilon,s)$ to $(\psi,s+\epsilon)$ across the edge $e_1$ between them, the entangled saddle represented by $e_1$ does not alter the arc types in $\Sigma_{\psi-\epsilon} \cap F_{s}$. Thus, every arc in $\Sigma_{\psi} \cap F_{s+\epsilon}$ is essential in $\Sigma_\psi$, and so $R^+$ is not labelled, which is impossible by Claim \ref{entangled}.
\end{proof}

Now we will analyze how the entangled saddles of $F_s$ to the page $\Sigma_\psi$ affect the type of intersection arcs from $\Sigma_{\psi-\epsilon}\cap F_s$ to $\Sigma_{\psi+\epsilon}\cap F_s$.  
Fix $\epsilon>0$ small enough so that $\Sigma_{\psi}$ is the only critical page in $\Sigma\times[ \psi-\epsilon,\psi+\epsilon]$, and let $\wh F$ be the component of $(\Sigma\times[ \psi-\epsilon,\psi+\epsilon])\cap F_s$ that contains the singular component of $\Sigma_\psi\cap F_s$.


\begin{Clm}\label{distance} For any pair of arcs $\alpha^\pm\subset \Sigma_{\psi\pm \epsilon}\cap \wh F$, we have $d_\A(\alpha^+,\alpha^-)\le 2$.
\end{Clm}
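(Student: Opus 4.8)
The plan is to realize $\gamma_0:=\Sigma_{\psi-\epsilon}\cap\wh F$ and $\gamma_2:=\Sigma_{\psi+\epsilon}\cap\wh F$ as the two ends of the cobordism $\wh F$ and to insert a common disjoint intermediate. Since the only critical page in the slab $\Sigma\times[\psi-\epsilon,\psi+\epsilon]$ is $\Sigma_\psi$, and $\wh F$ meets $\Sigma_\psi$ in the single singular leaf $G$ carrying exactly the two entangled saddles (Claims \ref{entangled}, \ref{notclosed}), the surface $\wh F$ is obtained from $\gamma_0\times[\psi-\epsilon,\psi]$ by attaching two bands along $\gamma_0\times\{\psi\}$, and symmetrically from $\gamma_2\times[\psi,\psi+\epsilon]$. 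Attaching these two bands one at a time — equivalently, resolving the two four-valent vertices of $G$ one at a time — passes through two intermediate properly embedded $1$-manifolds $\delta',\delta''$ in $\Sigma_\psi$, one for each vertex of $G$. Two preliminary observations: by Claim \ref{ess} every transversal arc of $\Sigma_\theta\cap F_s$ is essential, so every arc occurring in $\gamma_0,\gamma_2,\delta',\delta''$ is essential (and in particular $\alpha^-,\alpha^+$ are essential arcs of $\Sigma_\psi$); and since each consecutive pair among $\gamma_0,\delta',\gamma_2$ (resp.\ $\gamma_0,\delta'',\gamma_2$) differs by a single band move carried out inside a small disk about a vertex of $G$, they are disjoint up to isotopy in $\Sigma_\psi$.

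The core step is then short: if one of the intermediate systems, say $\delta'$, contains an essential arc $\beta$, then $\beta$ is disjoint from every arc of $\gamma_0$ and of $\gamma_2$, so $d_\A(\alpha^-,\beta)\le 1$ and $d_\A(\beta,\alpha^+)\le 1$, whence $d_\A(\alpha^-,\alpha^+)\le 2$ (the bound only improves if $\beta$ is isotopic to $\alpha^-$ or $\alpha^+$). A single band move can turn the all-essential system $\gamma_0$ into an arc-free or entirely inessential system only if $\gamma_0$ has at most two arcs and the band is incident to all of them (a band has two feet); therefore, unless both $\gamma_0$ and $\gamma_2$ consist of at most two arcs, at least one of $\delta',\delta''$ inherits an essential arc and we are done.

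It remains to handle the low-complexity residue: $\gamma_0$ and $\gamma_2$ each have at most two arcs (so $\chi(\wh F)\in\{-1,0\}$) and neither one-vertex resolution of $G$ carries an essential arc. Here I would combine Claim \ref{notclosed} — so that $G$, hence each resolution, genuinely reaches $\partial\Sigma_\psi$ — with the fact that the arcs of $\gamma_0$ and of $\gamma_2$ have the same endpoint set on $\partial\Sigma_\psi$, being matched through $\wh F$ by the vertical meridional arcs in $\partial\Sigma\times[\psi-\epsilon,\psi+\epsilon]$. Using $\chi(\Sigma_\psi)\le -3$ and the smallness of the cobordism $\wh F$ between $\alpha^-$ and $\alpha^+$, a direct Euler-characteristic count on $\Sigma_\psi$ cut along $\alpha^-\cup\alpha^+$ shows that this pair cannot fill $\Sigma_\psi$, so either there is an essential arc of $\Sigma_\psi$ disjoint from both $\alpha^-$ and $\alpha^+$ (again giving $d_\A(\alpha^-,\alpha^+)\le 2$), or $\alpha^-$ and $\alpha^+$ are already isotopic. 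I expect this last bookkeeping — the finite case analysis of how two entangled saddles act on a system of one or two essential arcs, and the verification that the resulting cobordism of Euler characteristic $-1$ or $0$ forbids a filling pair — rather than the conceptual band-resolution argument, to be the main obstacle.
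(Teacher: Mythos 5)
Your plan takes a genuinely different (and longer) route than the paper, and leaves an acknowledged gap that the paper's argument renders moot. The paper's proof is very short: let $N(G)$ be a regular neighborhood of the singular leaf $G\subset\Sigma_\psi$. Since $G$ is a connected graph whose only interior vertices are the two $4$--valent saddle points (with, by Claim~\ref{notclosed}, at least two endpoints on $\partial\Sigma_\psi$), one has $\chi(N(G))=\chi(G)\ge -1$, while $\chi(\Sigma_\psi)\le -3$; hence $N(G)$ cannot fill $\Sigma_\psi$ and there is an essential arc $\beta$ disjoint from $N(G)$. On the other hand, because the only tangencies of $F_s$ in $\Sigma\times[\psi-\epsilon,\psi+\epsilon]$ are the two entangled saddles, the projection $\pi$ to $\Sigma_\psi$ carries both $\alpha^+$ and $\alpha^-$ into $N(G)$ up to isotopy, so $\beta$ is disjoint from each of them and $d_\A(\alpha^+,\alpha^-)\le 2$.

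Your band-resolution scheme --- resolve the two saddle vertices of $G$ one at a time and hope an intermediate stage $\delta'$ or $\delta''$ retains an essential arc --- is a valid device and your triangle-inequality conclusion from it is correct. But, as you say yourself, it only closes the argument when some intermediate stage has an essential arc, and you do not complete the low-complexity residue (each of $\gamma_0,\gamma_2$ a system of at most two arcs both touched by the band). That residue is not mere bookkeeping: a single band move can, for instance, turn two essential arcs into one inessential arc plus a curve, and your proposal does not rule this out. Note also that your fallback --- an Euler-characteristic count ``cutting $\Sigma_\psi$ along $\alpha^-\cup\alpha^+$'' --- is awkward because $\alpha^-$ and $\alpha^+$ live in \emph{different} pages, so their projections to $\Sigma_\psi$ need not be disjoint, and one cannot simply cut along their union. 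The paper's observation is precisely the clean version of this fallback: estimate the complexity of $N(G)$, which automatically contains both projections, rather than of the projected arcs themselves. That single move makes the band resolutions, the generic/residue dichotomy, and the disjointness issue all unnecessary. (Minor remark: the subscript $\Sigma_{\theta\pm\epsilon}$ in the statement is a typo for $\Sigma_{\psi\pm\epsilon}$, as you correctly inferred.)
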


\begin{proof}
Let $N(G)$ be a neighborhood of the singular component $G$ of $\Sigma_{\psi}\cap F_s$ in $\Sigma_\psi$. Notice that $G\subset \Sigma_\psi$ is a graph with two vertices of valence 4 away from $\partial \Sigma_{\psi}$, where we assume $g(\Sigma_\psi)\ge 2$ as stated in Theorem \ref{ThinTheorem}. Therefore, $N(G)$ does not fill the surface $\Sigma_\psi$, i.e., there exists an essential arc, say $\beta$, in $\Sigma_\psi$ disjoint from $N(G)$.

Let $q:\Sigma\times [\psi-\epsilon,\psi+\epsilon]\to \Sigma_\psi$ be the projection map. It follows that $q(\alpha^\pm)\subset N(G)$ up to isotopy. Therefore, the arc $\beta$ is disjoint from $q(\alpha^+)$ and $q(\alpha^-)$ up to isotopy. Thus, we get $d_\A(\alpha^+,\alpha^-)\le 2$.
\end{proof}

\begin{Clm}\label{int}
Any simple closed curve in $\Sigma_{\psi\pm\epsilon} \cap\wh F$ is non-trivial $\Sigma_{\psi\pm\epsilon}$.
\end{Clm}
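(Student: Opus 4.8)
The plan is to argue by contradiction; by the symmetry between $\Sigma_{\psi-\epsilon}$ and $\Sigma_{\psi+\epsilon}$ it suffices to take a simple closed curve $c\subset\Sigma_{\psi+\epsilon}\cap\wh F$ that is trivial in $\Sigma_{\psi+\epsilon}$ and to produce from it either a label on $F_s$, contradicting Claim~\ref{nolabels}, or an inessential transversal arc in some page, contradicting Claim~\ref{ess}. As a first normalization, note that $c$ bounds a disk $D\subset\Sigma_{\psi+\epsilon}$, and a disk bounded by a trivial curve is disjoint from $\partial\Sigma_{\psi+\epsilon}$, so $\mathring D$ meets $F_s$ only in simple closed curves; choosing $c$ innermost I may assume $\mathring D\cap F_s=\emptyset$, and, absorbing the innermost curve into $\wh F$ if necessary, that $c\subset\wh F$.

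Next I would set up the projection picture exactly as in the proof of Claim~\ref{distance}. Since $\epsilon$ is small and $G$ is the singular component of $\Sigma_\psi\cap F_s$ carrying both entangled saddles, $\wh F$ is a regular neighborhood of $G$ in $F_s$; hence for the projection $\pi:\Sigma\times[\psi-\epsilon,\psi+\epsilon]\to\Sigma_\psi$ we have $\pi(c)\subset N(G)$ up to isotopy, where $N(G)$ is a regular neighborhood of $G$ in $\Sigma_\psi$. Because $\pi$ restricts to a homeomorphism $\Sigma_{\psi+\epsilon}\to\Sigma_\psi$, the curve $\pi(c)$ is a simple closed curve of $\Sigma_\psi$ that is simultaneously trivial in $\Sigma_\psi$ (it bounds $\pi(D)$) and isotopic into $N(G)$; moreover $\pi(c)$ arises from a circle of the $1$-manifold obtained by resolving the two valence-$4$ vertices of $G$ in the direction prescribed by the two saddle tangencies.

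Finally I would run a case analysis on how such a $\pi(c)$ can sit inside $N(G)$, using that $G$ is connected and meets $\partial\Sigma_\psi$ (Claim~\ref{notclosed}). If $\pi(c)$ is isotopic to the boundary of a regular neighborhood of a proper subtree $T$ of $G$ cut off by the disk it bounds, then, since $G$ is connected with both valence-$4$ vertices, some leaf of $T$ must be an endpoint of $G$ on $\partial\Sigma_\psi$, and following the corresponding strand of $\wh F$ onto a page $\Sigma_{\psi\pm\delta}$ just off $\Sigma_\psi$ yields a transversal arc of $\Sigma_{\psi\pm\delta}\cap F_s$ cutting off an inessential half-disk, contradicting Claim~\ref{ess}. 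In the remaining case $\pi(c)$ encircles a valence-$4$ vertex (a saddle point); tracing $c$ back through that saddle, and using that both saddles lie on the same component $G$ which reaches $\partial\Sigma_\psi$, one finds an arc of $\Sigma_\psi\cap F_s$ — or of a neighbouring page — cutting off a low disk or a high disk for $F_s$, so $F_s$ is labelled, contradicting Claim~\ref{nolabels}. Either branch gives the desired contradiction.

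The hard part will be this last step: one has to pin down precisely which simple closed curves of $\Sigma_\psi$ lie inside $N(G)$ while being null-homotopic in $\Sigma_\psi$, and convert each such configuration — through the resolution of the two saddle tangencies that turns $G$ into $\Sigma_{\psi\pm\epsilon}\cap\wh F$ — into an explicit inessential arc or low/high disk. Keeping track of how a null-homotopic loop in the resolved graph projects back to $G$, together with the position of the endpoints of $G$ on $\partial\Sigma_\psi$, is the delicate bookkeeping; everything else in the argument is routine.
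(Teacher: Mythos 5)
The paper's argument is much more direct and sidesteps the local combinatorics of $N(G)$ entirely: if a trivial simple closed curve $\alpha\subset\Sigma_{\psi-\epsilon}\cap\wh F$ interacts with the saddle on the edge $e_1$, then crossing $e_1$ from $R^w$ into $R^+$ merges an essential arc with a trivial circle, which does not change the isotopy type of any arc in the intersection; consequently $R^+$ would contain only essential arcs and hence receive no label, contradicting Claim~\ref{entangled}, which showed $R^+$ is labelled with $H$. The whole proof happens at the level of the labelling and the graphic $\Lambda$, not the graph $G$.

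Your proposal is a genuinely different route, and it has a real gap. You try to classify the null-homotopic components of the resolved graph inside $N(G)\subset\Sigma_\psi$ by hand, and you explicitly say the decisive case analysis is left to do; it should not be, since this is exactly where the argument lives or dies. Worse, the first branch of your dichotomy as stated cannot occur for the reason you invoke: since $c$ is trivial in $\Sigma_{\psi+\epsilon}$, the disk $\pi(D)\subset\Sigma_\psi$ it bounds is disjoint from $\partial\Sigma_\psi$, so any ``subtree $T$'' of $G$ cut off by $\pi(D)$ is itself disjoint from $\partial\Sigma_\psi$ --- none of its leaves can lie on $\partial\Sigma_\psi$, which undercuts the contradiction you want from Claim~\ref{ess}. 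You would have to derive the inessential arc from the \emph{edges of $T$ meeting $\pi(c)$}, not from leaves at the boundary, and you have not shown how. The second branch (a circle around a single valence-$4$ vertex) is similarly only gestured at. In short: you identified the right ingredients (the projection $\pi$, $N(G)$ not filling, Claim~\ref{notclosed}), but you chose the hard road and did not walk it. The efficient observation you are missing is that the claim is really a statement about whether the saddle on $e_1$ can fail to introduce an inessential arc, and Claim~\ref{entangled} already tells you it cannot.
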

\begin{proof}  Without loss of generality, assume for a contradiction that $\Sigma_{\psi-\epsilon} \cap\wh F$ contains a simple closed curve $\alpha$ that is trivial in $\Sigma_{\psi-\epsilon}$. We can also assume that this curve interacts with the saddle tangency represented by the edge $e_1\subset \Lambda$ (see Figure \ref{psis}). So, if we travel from $(\psi-\epsilon,s)$ to $(\psi,s+\epsilon)$ along the straight line between them, an essential arc enters into the saddle tangency with the trivial curve $\alpha$, and the arc types in the intersection do not change. This, in particular, implies that every arc in $\Sigma_{\psi} \cap F_{s+\epsilon}$ is essential in $\Sigma_\psi$, and so $R^+$ is not labelled, which is impossible by Claim \ref{entangled}.
\end{proof}

\begin{Lem}\label{tangencies}
The number of essential saddles of $\Sigma\times ([0,\psi-\epsilon]\cup [\psi+\epsilon, 2\pi])\cap F_s$ is at most $-\chi(F_s)-2$.
\end{Lem}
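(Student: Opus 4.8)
The plan is to reduce the statement to Lemma~\ref{saddles} by perturbing $F_s$ into a genuinely regular surface and then accounting separately for the two saddle tangencies it carries on the page $\Sigma_\psi$.

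First I would recall from Claim~\ref{entangled} that $\Sigma_\psi$ meets $F_s$ transversely except for exactly two saddle tangencies, which by hypothesis lie in the single entangled singular component $G\subset\Sigma_\psi\cap F_s$. Since $\Sigma_\psi$ is the only critical page in $\Sigma\times[\psi-\epsilon,\psi+\epsilon]$, a small ambient isotopy of $F_s$ supported in $\Sigma\times(\psi-\epsilon,\psi+\epsilon)$ moves the two saddle points onto two distinct pages $\Sigma_{\psi_1}$ and $\Sigma_{\psi_2}$ with $\psi-\epsilon<\psi_1<\psi_2<\psi+\epsilon$, leaving $F_s$ transverse to every other page in the slab and leaving $F_s$ unchanged outside the slab. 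Write $F_s'$ for the result: it is a regular surface isotopic to $F_s$, with $\chi(F_s')=\chi(F_s)$, and its essential saddle tangencies to pages $\Sigma_\theta$ with $\theta\in[0,\psi-\epsilon]\cup[\psi+\epsilon,2\pi]$ coincide with those of $F_s$.

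The heart of the argument is to show that both new saddles, at $\psi_1$ and at $\psi_2$, are essential (in the sense of the definition of inessential/essential saddle). Nothing changes outside the slab; inside, the intersection pattern $\Sigma_\theta\cap F_s'$ is constant and equal to $\Sigma_{\psi-\epsilon}\cap F_s$ for $\theta\in(\psi-\epsilon,\psi_1)$, equal to an intermediate pattern obtained from it by the band move of the first saddle for $\theta\in(\psi_1,\psi_2)$, and constant equal to $\Sigma_{\psi+\epsilon}\cap F_s$ for $\theta\in(\psi_2,\psi+\epsilon)$. By Claim~\ref{int}, every simple closed curve in $\Sigma_{\psi\pm\epsilon}\cap\wh F$ is non-trivial. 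By Claim~\ref{entangled}, the first saddle only introduces inessential arcs and the second only removes them, so neither saddle creates or destroys a simple closed curve; hence every simple closed curve occurring in the intermediate pattern already occurs in one of the patterns $\Sigma_{\psi\pm\epsilon}\cap F_s$ and is therefore non-trivial. Consequently, for $\delta>0$ small, the component of $F_s'\cap(\Sigma\times[\psi_i-\delta,\psi_i+\delta])$ containing the saddle at $\psi_i$ has in its boundary only non-trivial simple closed curves and, by Claim~\ref{ess}, essential arcs; in particular it has no trivial simple closed curve boundary component, so the saddle at $\psi_i$ is essential for $i=1,2$.

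Finally I would invoke Lemma~\ref{saddles}: since $F_s'$ is a regular surface in $X_K$ with $\chi(F_s')=\chi(F_s)\le 0$, the total number of its essential saddle tangencies is at most $-\chi(F_s)$. Two of these, namely the ones at $\psi_1$ and $\psi_2$, occur on pages $\Sigma_\theta$ with $\theta\in(\psi-\epsilon,\psi+\epsilon)$, hence at most $-\chi(F_s)-2$ essential saddles of $F_s'$, equivalently of $F_s$, occur on pages $\Sigma_\theta$ with $\theta\in[0,\psi-\epsilon]\cup[\psi+\epsilon,2\pi]$, which is the asserted bound. The main obstacle is the middle step: one must rule out that resolving the double saddle produces an inessential saddle, and the only failure mode is a trivial simple closed curve of intersection appearing near $\psi_1$ or $\psi_2$, which could a priori occur on an intermediate page even though Claim~\ref{int} directly controls only $\Sigma_{\psi\pm\epsilon}$; the way out is precisely the ``arcs only'' content of Claim~\ref{entangled}, which propagates non-triviality of simple closed curves across the slab. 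A minor point worth stating carefully is that the perturbation can be realized without creating new critical pages or altering the intersection graphic away from a neighborhood of the circle $C_s$.
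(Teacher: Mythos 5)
Your strategy is genuinely different from the paper's. The paper does not perturb $F_s$: it runs the Euler-characteristic count of Lemma~\ref{saddles} on $F_s$ itself, restricted to tangencies outside the slab $\Sigma\times[\psi-\epsilon,\psi+\epsilon]$, using Claim~\ref{notclosed} (so the two entangled saddles lie in a component $\wh F$ meeting $\partial X_K$, hence in no subdisk $D_j$) and Claim~\ref{int} (so no curve $C_j$, which is trivial in its page, can lie in $\wh F$) to conclude that the whole $C_j/D_j$ bookkeeping of Lemma~\ref{saddles} lives outside the slab and still yields $c\le s_i$ there. You instead perturb $F_s$ into a regular surface $F_s'$, apply Lemma~\ref{saddles} wholesale, and then need to subtract two essential saddles. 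Both routes lean on exactly the same inputs (Claims~\ref{notclosed},~\ref{int},~\ref{ess}); your version is arguably more transparent if the middle step is made airtight.

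However, as written, the middle step has a real gap. You assert that, by Claim~\ref{entangled}, ``the first saddle only introduces inessential arcs and the second only removes them, so neither saddle creates or destroys a simple closed curve.'' That is not what Claim~\ref{entangled} says, and it is not true in general. Claim~\ref{entangled} records only that the tangency represented by $e_1$ \emph{changes arc types} --- that observation is used there solely to rule out a center tangency --- and is silent about circles. A saddle that introduces an inessential arc can simultaneously create or destroy a simple closed curve: for instance, a band move that splits one essential arc into an inessential arc plus a circle. So you cannot conclude that every circle appearing on an intermediate page already appears at $\psi\pm\epsilon$; it may be born by the saddle at $\psi_1$ and killed by the saddle at $\psi_2$, a window invisible to Claim~\ref{int}.

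What actually closes this gap is a small extra argument you would need to supply: if the saddle at $\psi_1$ splits an essential arc $\alpha$ into an inessential arc $\alpha'$ and a new circle $\gamma$, then $\gamma$ is forced to be non-trivial. Indeed, if $\gamma$ bounded a disk $D'$ in the page (necessarily disjoint from $\alpha'$, since the endpoints of $\alpha'$ lie on $\partial\Sigma$ while $D'$ is interior), then $\alpha$, being a band sum of $\alpha'$ and the trivial circle $\gamma$, would be isotopic to $\alpha'$ and hence inessential, contradicting Claim~\ref{ess}. The symmetric argument handles a circle being absorbed, and the remaining arc-only moves affect no circles at all. With this observation your proof works; without it, the assertion that the resolved saddles are essential is unjustified and, by the counting $s_e'=-\chi(F_s')+(c'-s_i')$, an inessential resolved saddle would only give the weaker bound $-\chi(F_s)$ or $-\chi(F_s)-1$ outside the slab.
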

\begin{proof}
Fix an $\epsilon >0$ such that there are exactly two saddle tangencies of the level surface $F_s$ in $\Sigma\times [\psi-\epsilon,\psi+\epsilon]$. Let $c$ be the number of center tangencies and $s_i$ (respectively $s_e$) the number of inessential (respectively essential) saddle tangencies of $F_s$ to the pages in $\Sigma\times ([0,\psi-\epsilon]\cup [\psi+\epsilon, 2\pi])$. A standard Euler characteristic calculation provides 
$$\chi(F_s)+2=c-s_i-s_e \implies s_e = -\chi(F_s)-2+(c-s_i).$$
So, it suffices to show that $c-s_i\le 0$, or equivalently, $c\le s_i$.

By the last two claims, the saddle tangencies of $\Sigma_\psi\cap F_s$ are neither contained in a subdisk of $F_s$ nor do they interact with any inessential curve of intersection in $\Sigma_{\psi\pm\epsilon} \cap F_s$. Hence, it follows from the arguments of Lemma \ref{saddles} that away from the entangled saddles we have $c\le s_i$, as desired.
\end{proof}
\subsection{Proof of Theorem \ref{ThinTheorem}.}
As in the proof of Lemma \ref{meridional}, the result essentially follows from a counting argument that measures how much the arc types change as we travel from $\Sigma_0$ to $\Sigma_{2\pi}$ along a level surface in $X_K$ through the saddle tangencies. The counting arguments slightly differ between the cases $g=0$ and $g\ge 1$. Therefore, at the end, we will provide different proofs for the three-sphere and positive genus three-manifolds. However, first let us provide the arguments that are common to both cases.

Consider the meridional surface $F_s$, where $s:=\sup\{t\in (a,b)\,|\, F_t \text{ is labelled with } L\}$ with the labelling defined in Subsection \ref{labelling} above. By Lemma \ref{usefullevel}, any transversal arc of intersection in $\Sigma_\theta\cap F_s$ is essential in $\Sigma_{\theta}$. We can assume that $F_s$ is transverse to the page $\Sigma_0=\Sigma_{2\pi}$ by slightly rotating the pages, if necessary. Moreover, by Claim \ref{entangled}, we can assume that there exists an angle $\psi\neq 0$ such that the page $\Sigma_\psi$ is transverse to $F_s$ except for two entangled saddle tangencies. For any angle $\theta\neq \psi$, the level $F_s$ is transverse to $\Sigma_\theta$ except for possibly a single center or saddle tangency. 

Let there be $2n$ boundary components of $F_s$. 
We can denote $\partial F_s$ as $\{x_1,\ldots,x_{2n}\}\times S^1\subset \partial \Sigma\times S^1\cong \partial X_K$, where $x_i$ are distinct points in $\partial \Sigma$. By Claim \ref{notclosed}, the singular component $G$ of $\,\Sigma_{\psi}\cap F_{s}$ meets $\partial \Sigma_{\psi}\subset\partial X_K$. It follows that $G$ meets $ \partial X_K$ at either 2, 4, or 6 points. For simplicity, let us say an endpoint $x_i$ is \emph{singular} if $\{x_i\}\times S^1$ meets the singular component $G$. Otherwise, say $x_i$ is \emph{non-singular}. Hence, among $x_1,\ldots,x_{2n}$, there are either 2, 4, or 6 singular endpoints. We denote the number of singular endpoints by $r$.

Let $S$ be the preimage of $F_s$ under the quotient map $q: \Sigma\times [0,2\pi]\to X_K$, which maps $\Sigma\times\{\theta\}$ to $\Sigma_\theta$ in the natural way. For simplicity, we will not distinguish $\Sigma\times\{\theta\}$ and $\Sigma_\theta$. Observe that $S\cap (\partial \Sigma\times [0,2\pi])$ consists of vertical arcs $\{x_1,\ldots,x_{2n}\}\times [0,2\pi]$. Moreover, since $\Sigma_0$ is transverse to $F_s$, the intersection $S\cap \Sigma_0$ consists of some simple closed curves and exactly $n$ essential arcs, which are bounded by $\{x_1,\ldots,x_{2n}\}$, so that $S\cap \Sigma_{2\pi}$ consists of images of those curves and arcs under the monodromy, $\phi$.

By Lemma \ref{tangencies}, there are at most $-\chi(F_s)-2=(2g+2n-4)$ essential saddle tangencies of $F_s$ to the pages in $\Sigma\times ([0,\psi-\epsilon]\cup [\psi+\epsilon, 2\pi])$ for $\epsilon>0$ sufficiently small. As $\theta$ increases from $0$ to $2\pi$, the arc types in $S\cap \Sigma_\theta$ can change only if a page contains an essential saddle of $F_s$. Moreover, as we pass through each essential saddle tangency away from $\Sigma_{\psi}$, at most two new arcs can be introduced. As we pass through the entangled saddles in $\Sigma_{\psi}$, at most $r/2$ arc types are introduced. Therefore, the maximum possible total number of essential arc types that are introduced by essential or entangled saddle tangencies is $2(2g+2n-4)+r/2=4g+4n-8+r/2$. With the $n$ arcs in $S\cap \Sigma_0$, we deduce that the preimage $S=q^{-1}(F_s)$ intersects the pages of $\,\Sigma\times [0,2\pi]$ in at most $4g+5n-8+r/2$ distinct essential arc types. For $i=1,\ldots,2n$, let $k_i$ be the number of essential arcs that have endpoints in $x_i$. Since each arc has two endpoints, when we add $k_i$'s, we get
\begin{align}\label{ineq1}
k_1+k_2+\ldots+k_{2n}=8g+10n-16+r\le 8g+10n-10,
\end{align}
as the number $r$ of singular endpoints is at most 6.
Inequality \ref{ineq1} will allow us to apply combinatorial arguments. Now let us prove the main theorem of this section.\vspace{2mm}

\noindent\textit{Proof of Theorem \ref{ThinTheorem} for the three-sphere.} For $M=S^3$, the Heegaard genus is $g=0$ and by Inequality \ref{ineq1} we obtain 
$$k_1+k_2+\ldots+k_{2n}\le10n-10.$$

\noindent\textbf{Case 1.} $n=1$: In this case, the level surface $F_s$ is an annulus. Therefore, $F_s$ does not have any essential saddle tangencies to the pages $\Sigma_\theta$ and it is a perfect meridional annulus in $X_K$. It follows from Lemma \ref{meridional} that $d_\A(\phi)=0\le3$.\vspace{2mm}

\noindent\textbf{Case 2.} $2\le n\le 4$: In this case, there is an endpoint in $\partial \Sigma$ that realizes at most 3 distinct arc types. Otherwise, if every endpoint realizes at least four arc types, i.e., if $k_i\ge 4$ for all $i=1,\ldots,2n$, then from Inequality \ref{ineq1} we obtain
$$8n\le k_1+k_2+\ldots+k_{2n}\le 10n-10,$$
which is false for $2\le n\le 4$. Now let $x_1$ be the endpoint that realizes at most 3 distinct arc types, say $\alpha_1,\alpha_2,\alpha_3$. Since $\alpha_1\subset \Sigma_0$ and $\alpha_3\subset \Sigma_{2\pi}$ have the same endpoint $x_1$, we deduce that $\phi(\alpha_1)=\alpha_3$.

If $x_1$ is a non-singular endpoint, then $d_\A(\alpha_1,\alpha_2)\le 1$ and $d_\A(\alpha_2,\alpha_3)\le 1$ since the arc types are introduced by essential saddle tangencies away from $\Sigma_\psi$. Hence, we obtain $$d_\A(\phi)\le d_\A(\alpha_1,\phi(\alpha_1))=d_\A(\alpha_1,\alpha_3)\le 2\le 3.$$

On the other hand, if $x_1$ is a singular endpoint, then assume without loss of generality that $\alpha_2$ is introduced as $\alpha_1$ interacts with the entangled saddles at $\Sigma_\psi$. It follows from Claim \ref{distance} that $d_\A(\alpha_1,\alpha_2)\le 2$. Finally, we have $d_\A(\alpha_2,\alpha_3)\le 1$, which provides $$d_\A(\phi)\le d_\A(\alpha_1,\phi(\alpha_1))=d_\A(\alpha_1,\alpha_3)\le 3.$$

\noindent\textbf{Case 3.} $n\ge 5$: In this case,  Inequality \ref{ineq1}, $k_1+k_2+\ldots+k_{2n}\le10n-10$, implies that either there is an endpoint realizing 3 distinct arc types, or there are at least 10 endpoints realizing 4 distinct arc types. If there is an endpoint  realizing 3 distinct arc types, then the argument of the previous case implies $d_\A(\phi)\le 3$. So, assume that there are at least 10 endpoints realizing 4 distinct arc types. In particular, there exists a non-singular endpoint, say $x_1$, realizing $4$ distinct arc types. Let $\alpha_1,\alpha_2,\alpha_3,\alpha_4$ be the arc types that are realized by $x_1$. It follows that $d(\alpha_j,\alpha_{j+1})\le 1$, for $j=1,2,3$, since no $\alpha_j$ is involved with the entangled saddles. Since $\phi(\alpha_1)=\alpha_4$, we obtain 
$$d_\A(\phi)\le d_\A(\alpha_1,\phi(\alpha_1))=d_\A(\alpha_1,\alpha_4)\le 3.$$ 
This completes the proof of Theorem \ref{ThinTheorem} for the three-sphere. \qed\vspace{2mm}

Now le us present a proof for three-manifolds with positive Heegaard genus.\vspace{2mm}

\noindent\textit{Proof of Theorem \ref{ThinTheorem} for $g\ge 1$.} For positive genus three-manifolds, we analyze three different cases.\vspace{2mm}

\noindent\textbf{Case 1.} $n=1$: In this case, we do not use Inequality \ref{ineq1} since it provides an upper bound greater than the desired one. Instead, notice that the level surface $F_s$ we previously fixed is a genus $g$ surface with two boundary companents, i.e., $\chi(F_s)=-2g$. Then, by Lemma \ref{tangencies}, there are at most $2g-2$ saddle tangencies away from the entangled saddle. Moreover, each regular page $\Sigma_{\theta}$ contains a single arc of intersection from $\Sigma_\theta \cap F_s$, which is bounded by the $2n=2$ endpoints, say $\{x_1,x_2\}$. It follows that $x_1$ realizes at most $2g-1$ arc types between essential and entangled saddles, say $\alpha_1,\alpha_2,\ldots,\alpha_{2g-1}$. Since $\alpha_1\subset \Sigma_0$ and $\alpha_{2g-1}\subset \Sigma_{2\pi}$ have the same endpoint $x_1$, we deduce that $\phi(\alpha_1)=\alpha_{2g-1}$. Without loss of generality, assume that $\alpha_2$ is introduced as $\alpha_1$ interacts with the entangled saddles at $\Sigma_\psi$. It follows from Claim \ref{distance} that $d_\A(\alpha_1,\alpha_2)\le 2$. On the other hand, we have $d_\A(\alpha_j,\alpha_{j+1})\le 1$ for $j=2,\ldots, 2g-1$, which implies
$$d_\A(\phi)\le d_\A(\alpha_1,\phi(\alpha_1))=d_\A(\alpha_1,\alpha_{2g-1})\le 2g< 2g+2.$$

\noindent\textbf{Case 2.} $n=2$: In this case, Inequality \ref{ineq1}, $k_1+k_2+\ldots+k_{2n}\le 8g+10n-10$, turns into $k_1+k_2+k_3+k_4\le8g+10$. Therefore, there exists an endpoint, say $x_1$, realizing at most $2g+2$ arc types, say $\alpha_1,\alpha_2,\ldots,\alpha_{2g+2}$. Since $\alpha_1\subset \Sigma_0$ and $\alpha_{2g+2}\subset \Sigma_{2\pi}$ have the same endpoint $x_1$, we deduce that $\phi(\alpha_1)=\alpha_{2g+2}$.

If $x_1$ is not a singular endpoint, then $d_\A(\alpha_j,\alpha_{j+1})\le 1$ for each $j=1,\ldots,2g+1$ since the arc types are introduced by essential saddle tangencies away from $\Sigma_\psi$. Hence, we obtain $$d_\A(\phi)\le d_\A(\alpha_1,\phi(\alpha_1))=d_\A(\alpha_1,\alpha_{2g+2})\le 2g+1\le 2g+2.$$

On the other hand, if $x_1$ is a singular endpoint, then assume without loss of generality that $\alpha_2$ is introduced as $\alpha_1$ interacts with the entangled saddles at $\Sigma_\psi$. It follows from Claim \ref{distance} that $d_\A(\alpha_1,\alpha_2)\le 2$. On the other hand, we have $d_\A(\alpha_j,\alpha_{j+1})\le 1$ for $j=2,\ldots, 2g+1$, which provides $$d_\A(\phi)\le d_\A(\alpha_1,\phi(\alpha_1))=d_\A(\alpha_1,\alpha_{2g+2})\le 2g+2.$$

\noindent\textbf{Case 3.} $n\ge 3$: In this case, Inequality \ref{ineq1}, $k_1+k_2+\ldots+k_{2n}\le 8g+10n-10$,
implies that there is an endpoint realizing at most $\lfloor 5+(8g-10)/6 \rfloor\le 2g+2$ distinct arc types (which can be seen by a case analysis on values of $g$). The argument of the previous case works equally in this case. Thus, we get $d_\A(\phi)\le 2g+2$, as desired. \qed

\section{Non-Primitive Fibered Knots on Strongly Irreducible Heegaard Surfaces} \label{Non-Primitive}
In the last two sections, we showed that the complexity bound stated in Part (4) of Theorem \ref{MainTheorem} holds when a minimal genus Heegaard splitting $P\subset M$ is weakly reducible, or $K$ cannot be isotoped into $P$ in $M$. The remaining case is that the fibered knot $K$ lies on a strongly irreducible Heegaard surface $P$, which is addressed in the following theorem.\v

\begin{Thm}\label{KinP} Let $K\subset M$ be a fibered knot with monodromy $\phi$ and pages of genus greater than one. If $(P,U,V)$ is a strongly irreducible Heegaard splitting of genus $g\ge 2$ in $M$ such that $K\subset P$, then at least one of the following holds:
	\begin{enumerate}
		\item $P$ is isotopic to the Heegaard surface induced by $K$.
		\item $d_{\C}(\phi)\le 2g-2$.
\end{enumerate}
\end{Thm}
Notice that we state the theorem for any strongly irreducible Heegaard splittings rather than minimal genus ones, to prove Theorem \ref{StabilizationTheorem} as well. This section and the next are devoted to the proof of Theorem \ref{KinP}. In this section, we will give a proof of the theorem when $K\subset P$ is not primitive in either $U$ or $V$. 

\subsection{Curves on Heegaard splittings.} In this subsection, we state some fundamental definitions and facts regarding the types of simple closed curves in Heegaard surfaces.
\begin{Def}
Let $P\subset M$ be a Heegaard surface that bounds a handlebody $U$ on one side. A simple closed curve $C\subset P$ is called
\begin{itemize}
\item \emph{primitive} in $U$ if there exists an essential disk $D$ in $U$ such that $C$ intersects the boundary of $D$ exactly once,
\item  \emph{disk-busting} in $U$ if $C$ intersects every essential disk in $U$, and
\item a \emph{core} in $U$ if $C$ is isotopic into a spine of the handlebody $U$.
\end{itemize}
\end{Def}

Let $(P,U,V)$ be a Heegaard splitting and $C\subset P$ a simple closed curve. It follows from standard arguments in the Heegaard splittings literature that $C\subset P$ is primitive in $U$ if and only if it is a core in $U$ (see Lemma 4.45 in \cite{JohnsonNotes}). Moreover, if $C$ is a core in $U$, then $P$ cuts the exterior of $C$, $X_C\subset M$, in to a pair $(U',V)$, where $U'=U\setminus \mathring{N}(C)$ is a compression-body and $V$ is a handlebody. In other words, $P$ is a Heegaard surface in $X_C$. For more details, see \cite{JohnsonNotes}.

One convenience of working with a strongly irreducible Heegaard surface $P$ is the following lemma, which will allow us to manipulate the compressions of $P$.
\begin{Lem}[Scharlemann's no-nesting Lemma, \cite{Scharlemann}]\label{nonest} Let $(P,U,V)$ be a strongly irreducible Heegaard surface in a compact three-manifold $M$. If $\alpha\subset P$ is a simple closed curve that bounds a disk in $M$, then $\alpha$ bounds a properly embedded disk in $U$ or $V$.
\end{Lem}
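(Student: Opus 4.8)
The plan is to prove the (slightly stronger, and standard) statement that any disk $D\subset M$ with $\partial D\subset P$ can be isotoped rel $\partial D$ until $D\cap P=\partial D$; the Lemma follows immediately, since then $D$ lies entirely in $U$ or in $V$ and is a properly embedded disk there, so $\alpha=\partial D$ bounds a disk on one side. So I would fix such a disk $D$, make it transverse to $P$ away from $\partial D$ (so $D\cap P$ is $\alpha$ together with finitely many circles in $\mathrm{int}(D)$), and among all disks isotopic to $D$ rel $\partial D$ choose one minimizing the number of interior circles of $D\cap P$. If that number is $0$ we are done, so the whole point is to rule out interior circles.

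First, no interior circle can be inessential in $P$: if $c'\subset\mathrm{int}(D)\cap P$ is innermost on $D$ and bounds a disk in $P$, then together with the subdisk it cuts off in $D$ it forms a $2$-sphere; since handlebodies are irreducible this sphere bounds a ball in $U$ or in $V$, and pushing $D$ across that ball removes $c'$, contradicting minimality. Hence every interior circle of $D\cap P$ is essential in $P$. Now let $c'$ be any circle of $D\cap P$ innermost on $D$: it bounds a subdisk $D'\subset D$ whose interior misses $P$, so $D'$ lies in one handlebody, say $U$, and since $c'$ is essential, $D'$ is a compressing disk for $P$ in $U$.

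Next I would bring in strong irreducibility. Since $D'$ is a subdisk of $D$, it is disjoint from every region of $D$ not containing it; in particular, if some innermost circle of $D\cap P$ bounded a subdisk inside $V$, that would be a $V$-compressing disk disjoint from $D'$, contradicting strong irreducibility, so all innermost subdisks of $D$ lie in $U$. A parallel-copy argument further shows $c'$ cannot bound a compressing disk in $V$: two compressing disks with the same essential boundary curve become disjoint after pushing one boundary to a disjoint parallel copy, again contradicting strong irreducibility. Thus the region $R$ of $D$ lying on the $V$-side of $c'$ is a planar surface in $V$ with $c'$ among its boundary curves and interior disjoint from $P$, and by the two facts just noted it is neither a disk nor an annulus parallel into $P$ (such an annulus would let us isotope $D$ to remove $c'$, contradicting minimality). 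Since an incompressible, $\partial$-incompressible surface in a handlebody is a disk, $R$ admits a compression or boundary-compression in $V$; performed along a boundary component of $R$ other than $c'$, this yields a $V$-compressing disk whose boundary can be kept disjoint from $c'=\partial D'$ — contradicting strong irreducibility. (When $R$ is an annulus, this is exactly the boundary-compression of an essential annulus in a handlebody; in general one iterates, working inward through the nested circles.)

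The main obstacle is this last step: organizing the regions of $D$ — via their dual tree, rooted at the region meeting $\alpha$ — so that one can always locate a (boundary-)compression of a $V$-region that is genuinely disjoint from the boundary of some $U$-subdisk of $D$, and verifying that the resulting curve in $P$ is essential (so that it really is a compressing disk). This bookkeeping is precisely Scharlemann's argument in \cite{Scharl}; alternatively the Lemma may simply be quoted from there, the outline above indicating why strong irreducibility is exactly what the proof needs.
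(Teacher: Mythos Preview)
The paper does not prove this lemma at all: it is stated with attribution to Scharlemann \cite{Scharl} and then used as a black box throughout Sections~5 and~6. Your closing remark---that the lemma may simply be quoted from \cite{Scharl}---is precisely what the paper does.

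Your outline is a reasonable sketch of Scharlemann's original argument, and you correctly identify where the real work lies: the combinatorics of the dual tree of the regions of $D$, ensuring that a (boundary-)compression on the $V$-side can be found disjoint from some $U$-side compressing disk. One point to tighten if you ever write this out in full: your treatment of the region $R$ adjacent to an innermost disk is slightly loose. You assert that a boundary-compression of $R$ ``performed along a boundary component of $R$ other than $c'$'' yields a $V$-compressing disk disjoint from $c'$, but a single $\partial$-compression of a planar surface need not produce a disk, and the arc of $\partial$-compression is not a priori disjoint from $c'$. The clean way (as in \cite{Scharl}) is to work with a \emph{second-innermost} circle $c$ on $D$---one for which every circle of $D\cap P$ interior to it is innermost---so that the region inside $c$ is a disk-with-holes whose inner boundaries all bound $U$-disks; then one argues directly with $c$. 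This is exactly the ``bookkeeping'' you flag, so there is no genuine error in your plan, only an acknowledged gap that the cited reference fills.
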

Notice that if the curve $\alpha$ is essential in $P$, then by definition, it bounds an essential disk in $U$ or $V$. The following lemma will also be useful to analyze compressions of strongly irreducible Heegaard splittings.

\begin{Lem}\label{compressions}
Let $(P,U,V)$ be a strongly irreducible Heegaard splitting in $M$. If $P'$ is obtained by compressing $P$ along essential disks in $U$ (or $V$), then $P'$ is not a Heegaard surface.
\end{Lem}
\begin{proof}
Let $P'$ be a closed connected surface that is obtained by compressions of $P$ along essential disks $D_1,\ldots, D_k$ in $U$. First notice that if $P'$ is a sphere, then it cannot be a Heegaard splitting in $M$, for otherwise $M$ would be homeomorphic to $S^3$, which has no strongly irreducible Heegaard splittings by \cite{Wald}. So, we can assume that $P'$ has positive genus. The compressions of $P$ in $U$ can be interpreted as attaching two-handles $N(D_1),\ldots,N(D_k)$ to $V$, and so $P'$ bounds the subsapce $V' =V\cup (\cup_{i=1}^k N(D_i))$  in $M$. To prove that $P'$ is not a Heegaard splitting, we will show that $V'$ is not a handlebody.

If we isotope $P$ slightly into $V$, then it bounds the handlebody $V\setminus (P\times(0,1])$ on one side and the compression-body $(P\times[0,1]) \cup (\text{two-handles})$ on the other side. In other words, $P$ is a Heegaard splitting in $V'$ up to isotopy.  Since $P$ is a strongly irreducible Heegaard splitting in $M$, it follows that $P$ is strongly irreducible in the subspace $V'$ as well. Then we deduce from Theorem 2.1 in \cite{CassonGordon} that $V'$ is irreducible, i.e., it does not contain essential disks or spheres. Therefore, $V'$ is not a handlebody, and thus $P'$ is not a Heegaard splitting.
\end{proof}

\subsection{Non-meridional essential surfaces in fibered knot exteriors.} In the proof Theorem \ref{KinP} under the assumption that $K\subset P$ is not primitive, the surface $P\setminus\mathring N(K)$ embedded in $X_K$ will play a fundamental role. Since $K$ is assumed to lie in $P$, this surface will have two boundary components which realize a non-meridional  (possibly zero) slope in $\partial X_K$. Therefore, first we will provide some complexity bounds when $X_K$ contains a non-meridional essential surface.

\begin{Lem}\label{nonzeroslope}
Let $F\subset X_K$ be a properly embedded essential surface with non-empty boundary components of a non-meridional, non-zero slope in $\partial X_K$. Assume that $F$ is not boundary-parallel in $X_K$. If $F$ is an annulus, then $d_\A(\phi)\le 1$. If $\chi(F)\le -1$, then $d_\A(\phi)\le -\chi(F)$.
\end{Lem}
\begin{proof}
By Theorem 4 in \cite{Thurs}, we can isotope $F$ in $X_K$ so that $F$ only has saddle tangencies to $m=-\chi(F)$ pages. Moreover, since $F$ is an essential surface that is not boundary-parallel, every arc of intersection $F\cap\Sigma_{\theta}$ is essential in $\Sigma_{\theta}$ for any $\theta$ (see Lemma \ref{boundaryperfect}). Let $S$ be the preimage of $F$ under the quotient map $q: \Sigma\times [0,2\pi]\to X_K$, which maps $\Sigma\times\{\theta\}$ to $\Sigma_\theta$ in the natural way. For simplicity, we will not distinguish $\Sigma\times\{\theta\}$ and $\Sigma_\theta$.

If $F$ is an annulus, then $m=-\chi(F)=0$, i.e., $F$ has no tangencies to the pages. Fix an arc $\alpha\subset S\cap \Sigma_0$. Since there are no tangencies, there exists an arc $\beta\subset S\cap\Sigma_{2\pi}$, which is isotopic to $\alpha$ (when both are regarded in $\Sigma$). Since $F$ is properly embedded in $X_K$, either $\beta=\phi(\alpha)$, or $\beta$ and $\phi(\alpha)$ are disjoint. Thus, we obtain $d_\A(\alpha,\phi(\alpha))=d_\A(\beta,\phi(\alpha))\le 1$, which implies that $d_\A(\phi)\le 1$.

Now assume that $\chi(F)\le -1$, so there exist $m=-\chi(F)\ge 1$ saddle tangencies of $F$ to the pages. Let $\Sigma_{\theta_1}, \ldots, \Sigma_{\theta_m}$ be the pages that are transversal to $F$ except for a single saddle tangency, where $0< \theta_1< \ldots < \theta_m<2\pi$. For each $i=1,\ldots,m-1$, fix an angle $t_i$ in $(\theta_i,\theta_{i+1})$ and choose an arc $\alpha_i\subset S\cap \Sigma_{t_i}$. Furthermore, choose an arc $\alpha_0 \subset S\cap \Sigma_0$ and set $\alpha_m=\phi(\alpha_0)\subset S\cap \Sigma_{2\pi}$. Since, for each $i=0,\ldots,m-1$, there is only a single saddle tangency of $F$ in $\Sigma\times [t_i,t_{i+1}]$, we can isotope $\alpha_{i+1}$ and $\alpha_i$ to be disjoint in $\Sigma$. In other words, for each $i=0,\ldots,m-1$, we have $d_\A(\alpha_i,\alpha_{i+1})\le 1$. Thus, by the triangle inequality, we obtain
$$d_\A(\phi)\le d_\A(\alpha_0,\phi(\alpha_0))=d_\A(\alpha_0,\alpha_m)\le \sum_{i=0}^{m-1}d_\A(\alpha_i,\alpha_{i+1})\le m=-\chi(F),$$
as desired.
\end{proof}
\begin{Rem}
We believe that the complexity bound stated in the last lemma can be given in terms of the genus rather than the Euler characteristic of $F$, by a careful application of the combinatorial arguments introduced in the proof of Lemma \ref{meridional}. This would be more convenient especially when the number of boundary components of $F$ is large. However, in the proof of Theorem \ref{KinP} for non-primitive knots, we will be dealing with surfaces that have small number of boundary components. Therefore, a complexity bound in terms of Euler characteristic is fine for our purposes.
\end{Rem}
Next, we state three lemmas that will be useful when we have an incompressible surface in $X_K$ with boundary components of the zero slope. We begin with the following lemma, which essentially follows from Proposition 3.1 in \cite{FW}.

\begin{Lem}\label{standa}
Let $F\subset X_K$ be a properly embedded incompressible surface. If $F$ is disjoint from a page $\Sigma_\theta$, then each component of $F$ is either a $\partial$-parallel annulus or isotopic to a page in $X_K$.
\end{Lem}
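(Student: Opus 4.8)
The plan is to cut $X_K$ open along the page $\Sigma_\theta$ and invoke the classification of incompressible surfaces in an interval bundle. Cutting $X_K$ along $\Sigma_\theta$ produces a manifold $N\cong\Sigma\times[0,1]$, an interval bundle over $\Sigma$ whose horizontal boundary $\partial_h N=\Sigma\times\{0,1\}$ is the pair of copies of $\Sigma_\theta$ and whose vertical boundary $\partial_v N=\partial\Sigma\times[0,1]$ is an annulus sitting inside $\partial X_K$ (here $\partial\Sigma$ is connected since $K$ is a knot). Because $F$ is disjoint from $\Sigma_\theta$, it sits inside $N$ as a properly embedded surface missing $\partial_h N$, so $\partial F\subset\partial_v N$; and $F$ remains incompressible in $N$, since any compressing disk for $F$ in $N$ would be one in $X_K$. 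Passing to a component, I would assume from now on that $F$ is connected, and that $F$ is not a $2$-sphere or a trivial disk (these do not occur in the applications, and are excluded as soon as $F$ is also $\partial$-incompressible).

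Next I would normalize the boundary. Every component of $\partial F$ is a simple closed curve in the annulus $\partial_v N$, hence either inessential there or isotopic to the core $\partial\Sigma\times\{\mathrm{pt}\}$. An inessential component bounds a disk in $\partial_v N\subset\partial N$, hence bounds a disk in $N$; since $F$ is incompressible and $N$ is irreducible, a standard innermost-disk argument shows it bounds a disk in $F$, which an isotopy then removes from $\partial F$. After finitely many such moves every component of $\partial F$ is core-parallel in $\partial_v N$; in particular $\partial F$ is essential in $\partial N$ and represents a nontrivial class in $\pi_1(N)$ (because $\partial\Sigma$ is essential in $\Sigma$), which is consistent with our assumption that $F$ is not a disk. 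Now I would apply Proposition 3.1 of \cite{FW}: an incompressible surface in the interval bundle $N$ whose boundary lies on the vertical boundary and misses the horizontal boundary is isotopic either to a horizontal surface, i.e.\ a copy of the base $\Sigma$, or to an annulus parallel into $\partial_v N$.

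Finally I would translate back through the cutting. A horizontal copy of $\Sigma$ in $N$ becomes a surface isotopic to the page $\Sigma_\theta$ in $X_K$, while an annulus parallel into $\partial_v N$ becomes a $\partial$-parallel annulus in $X_K$, since $\partial_v N\subset\partial X_K$. This yields the two alternatives in the statement. The step I expect to be the main obstacle is the boundary analysis of the second paragraph: one must be sure that, after discarding inessential boundary circles, the surface genuinely satisfies the hypotheses under which \cite{FW} applies (essential boundary contained in the vertical part, no disk components), and one must keep careful track of how ``horizontal'' and ``vertical/$\partial_v$-parallel'' pieces of $N$ correspond, respectively, to ``page'' and ``$\partial$-parallel annulus'' in $X_K$; everything else is bookkeeping around a direct citation.
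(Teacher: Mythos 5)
Your approach is the same as the paper's, which offers no written proof of Lemma 5.5 at all beyond the remark that it "essentially follows from Proposition 3.1 in \cite{FW}"; you have simply made the implicit reduction explicit: cut $X_K$ open along the page $\Sigma_\theta$ to obtain the product $I$-bundle $N\cong\Sigma\times I$, observe that $F$ sits in $N$ with boundary on the vertical boundary and remains incompressible, and then appeal to the classification of incompressible surfaces in $I$-bundles.

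Two small points are worth flagging, both of an expository rather than mathematical nature. First, the statement of Waldhausen's Proposition 3.1 that you quote (boundary on the vertical part, conclusion ``horizontal or $\partial_v$-parallel annulus'') is not quite the literal content of that proposition, which concerns surfaces with boundary in the \emph{horizontal} part of the $I$-bundle and assumes the surface is not boundary-parallel; passing from that to the dichotomy you use requires a short but genuine extra step handling the $\partial$-compressible case, which is exactly why the paper hedges with ``essentially.'' Your write-up would be improved by separating the $\partial$-incompressible case (forced horizontal, since every vertical piece meets $\partial_h N$) from the $\partial$-compressible case (an incompressible, $\partial$-compressible surface with core-parallel boundary in $\partial_v N$ is a $\partial$-parallel annulus). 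Second, the sentence claiming one can ``remove'' an inessential boundary circle by an isotopy while keeping $F$ connected is not quite right: if a boundary circle of a connected surface bounds a disk in that surface, the surface \emph{is} that disk, so the correct conclusion is simply that inessential boundary circles force a disk component, which you have already excluded. Since the applications in Sections 5 and 6 only use this lemma for surfaces whose boundary already has the zero (hence core) slope, this step is in any case vacuous there. With these clarifications the argument is sound and tracks the paper's intent exactly.
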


\begin{Lem}\label{aperipheral}
Let $F\subset X_K$ be a properly embedded incompressible surface that has no $\partial$-parallel annulus component. Assume that $F$ has non-empty boundary components of zero slope.  If there exists a page $\Sigma_\theta$ such that $F\cap \Sigma_\theta$ consists of peripheral curves in $\Sigma_{\theta}$, then $F$ is isotopic to a union of pages.
\end{Lem}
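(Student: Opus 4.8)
The plan is to reduce this statement to Lemma \ref{standa} by using the annulus surgery operation (introduced in Section 3) to remove all the peripheral curves of $F\cap\Sigma_\theta$, thereby producing an isotopic copy of $F$ that is disjoint from $\Sigma_\theta$. First I would deal with trivial curves: since $F$ is incompressible, any trivial curve of $F\cap\Sigma_\theta$ bounds a disk in $\Sigma_\theta$, and an innermost such disk can be used in the standard way to isotope $F$ and remove that curve, so we may assume $F\cap\Sigma_\theta$ has no trivial curves. By hypothesis $F\cap\Sigma_\theta$ then consists entirely of peripheral simple closed curves (and, a priori, possibly arcs — I will address this point below). Next I would run the annulus surgery argument exactly as in the proof of Lemma \ref{closedperfect}: pick an outermost peripheral curve $\gamma$ of $F\cap\Sigma_\theta$, i.e.\ one cutting off an annulus $A\subset\Sigma_\theta$ with $A\cap F=\emptyset$, and perform the annulus surgery of $F$ along $\gamma$ through the solid torus $N(K)$ bounds in $M$; here is where we need $F$ to have zero-slope boundary, since the annulus surgery pushes $F$ across $N(K)$ and is compatible with boundary curves of the zero slope on $\partial X_K$. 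Each such surgery is realized by an isotopy of $F$ in $X_K$ that eliminates $\gamma$ from the intersection without creating new intersection curves; iterating over all peripheral curves (always choosing an outermost one) produces an isotopic copy of $F$ disjoint from $\Sigma_\theta$.

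Once $F$ is disjoint from $\Sigma_\theta$, Lemma \ref{standa} applies directly: every component of $F$ is either a $\partial$-parallel annulus or isotopic to a page. Since we assumed $F$ has no $\partial$-parallel annulus component (and none is created by the isotopies above — the surgeries only push existing components around, they do not change homeomorphism type or introduce $\partial$-parallel pieces), every component of $F$ must be isotopic to a page $\Sigma_\theta$. Hence $F$ is isotopic to a union of pages, as claimed.

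The main obstacle I anticipate is the arc case. The hypothesis says $F$ has non-empty boundary components of the \emph{zero} slope, but it does not a priori forbid $F\cap\Sigma_\theta$ from containing arcs; if such arcs were present, annulus surgery alone would not remove them. I would handle this as follows: if $F\cap\Sigma_\theta$ contains an inessential arc, then (as in Lemma \ref{boundaryperfect}) an outermost such arc yields a compressing disk for $F$ in $X_K$, contradicting incompressibility — unless $F$ is an annulus, which is excluded since $F$ has zero-slope boundary components cobounding with $\partial\Sigma_\theta$ and any annulus component of zero slope is $\partial$-parallel (again excluded). So any arcs of $F\cap\Sigma_\theta$ would have to be essential in $\Sigma_\theta$; but an essential arc meets $\partial\Sigma_\theta$, while $\partial F$ consists of zero-slope curves, which are isotopic to $\partial\Sigma_\theta\cap\partial X_K$ and can be made disjoint from $\partial\Sigma_\theta$, forcing $F\cap\Sigma_\theta$ to have no arcs at all after a boundary isotopy. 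A secondary technical point is verifying that the annulus surgeries can be performed independently and do not interfere with one another; choosing curves from outermost to innermost, as in Lemma \ref{closedperfect}, ensures each surgery acts in a region disjoint from the already-surgered pieces, so the process terminates after finitely many steps.
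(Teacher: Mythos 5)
Your plan correctly identifies the reduction (make $F$ disjoint from $\Sigma_\theta$, then quote Lemma \ref{standa}) and correctly notices the arc subtlety that the paper leaves implicit. However, the central step has a genuine gap: annulus surgery, as defined in Section 3, is an operation on \emph{closed} surfaces. It replaces the annulus $N(\gamma)\subset F$ by $A^-\cup A'\cup A^+$, where $A'$ is a subannulus of $\partial X_K$, by an isotopy that pushes $N(\gamma)$ across the solid torus $N(K)$. When $F$ has nonempty boundary of zero slope, the curves of $\partial F$ lie on $\partial X_K$ in the interior of $A'$ (zero-slope curves are parallel to $\partial\Sigma_\theta$ and can be made disjoint from it, so they sit in the complement $A'$ of a collar of $\partial\Sigma_\theta$). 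The solid torus through which the isotopy sweeps therefore already meets $F$ along a collar of $\partial F$, and the pushed-off copy of $A'$ would collide with that collar: the result is not embedded. More fundamentally, annulus surgery is an isotopy of $F$ in $M$, which is harmless for a closed surface incompressible in $M$ (the setting of Lemma \ref{closedperfect}), but here $F$ is a properly embedded surface in $X_K$ with boundary on $\partial X_K$, and pushing $F$ across $N(K)$ is not an allowed move --- it would detach $\partial F$ from $\partial X_K$. Your sentence asserting the surgery ``is compatible with boundary curves of the zero slope'' is exactly the justification that is missing, and I do not think it can be supplied.

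The paper sidesteps the ambient isotopy altogether. After putting $F$ in minimal position with respect to $\Sigma_\theta$, it cuts $X_K$ along $\Sigma_\theta$ to get $N\cong\Sigma\times I$ and sets $S=F\cap N$. It shows $S$ is incompressible in $N$ (using incompressibility of $F$ and the hypothesis that $F\cap\Sigma_\theta$ is peripheral, which rules out compressing disks of $S$ bounding subdisks of $F$ that cross $\Sigma_\theta$), and then applies the Waldhausen-type Lemma \ref{standa} \emph{inside $N$} to conclude each component of $S$ is a page or a $\partial$-parallel annulus in $N$. From this it follows that every curve of $F\cap\Sigma_\theta$ is peripheral \emph{in $F$}, so an outermost one cuts off an annulus of $F$ that can be isotoped off $\Sigma_\theta$ within $X_K$, contradicting minimality. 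This argument never leaves $X_K$ and never moves $\partial F$, which is precisely what makes it valid for surfaces with boundary. If you want to keep the spirit of your plan, the fix is to replace the annulus surgery by this ``cut along the page and apply Lemma \ref{standa} to $S=F\cap N$'' step, which gives the peripherality in $F$ needed to eliminate intersection curves by isotopies supported in $X_K$.
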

\begin{proof}
Isotope $F$ to intersect $\Sigma_{\theta}$ minimally. By the previous lemma, it suffices to show that $F$ is disjoint from $\Sigma_{\theta}$. Assume for a contradiction that $F$ is not disjoint from $\Sigma_{\theta}$. Let us define $N=X_K\setminus \mathring{N}(\Sigma_{\theta})$ and $S=F\cap N=F\setminus \mathring{N}(\Sigma_{\theta})$.\vspace{1mm}

\noindent \textbf{Claim.} $S$ is incompressible in $N$.\vspace{1mm}

\noindent\textit{Proof.} Assume for a contradiction that $S$ is compressible in $N$. Choose a compressing disk $D$ for $S$ and let $\gamma=\partial D=D\cap S$. Since $F$ is an incompressible surface, $\gamma$ bounds a disk $E\subset F$ which does not lie in $S$. Therefore, $E$ intersects $\Sigma_\theta$, and a component $\delta$ of $E\cap \Sigma_{\theta}\subset F\cap \Sigma_{\theta}$ is peripheral in $\Sigma_{\theta}$ by assumption. Since the peripheral curve $\delta\subset \Sigma_{\theta}$ bounds a disk in $E$, we deduce that $\partial \Sigma_{\theta}$ bounds a disk in $X_K$. This implies that $K$ is the unknot in $M=S^3$, which contradicts the assumption that $M$ has a strongly irreducible Heegaard splitting. \qed\vspace{1mm}

By assumption, $\partial S$ is peripheral in the horizontal boundary $\Sigma\times \{0,1\}$ of $N\cong \Sigma\times I$. 
By Lemma \ref{standa}, we deduce that each component $S$ is either a page or a $\partial$-parallel annulus in $N$. It follows that the intersection of $F$ with the page $\Sigma_\theta$ consists of peripheral curves in $F$. Let $\gamma\subset F\cap \Sigma_{\theta}$ be an outermost curve of intersection, which cuts off an annulus $A$ from $F$. We can isotope $F$ in $X_K$ to eliminate $\gamma$ from the intersection $F\cap \Sigma_{\theta}$, which contardicts the minimality assumption.
\end{proof}

\begin{Lem}\label{zeroslope}
Let $F\subset X_K$ be a properly embedded incompressible surface with non-empty boundary components of zero slope. If $F$ is not isotopic to a union of pages and boundary-parallel annuli, then $d_{\C}(\phi)\le -\chi (F)$.
\end{Lem}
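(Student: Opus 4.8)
The plan is to reduce to the closed case by a Dehn filling. Since $F$ has zero-slope boundary, $\partial F$ consists of curves parallel to $\partial\Sigma_\theta=\partial\Sigma\times\{\theta\}$ in $\partial X_K\cong\partial\Sigma\times S^1$, and I would fill $X_K$ along this slope. Because $\phi|_{\partial\Sigma}=\id$, the resulting closed manifold is the mapping torus $\wh M$ of $\wh\phi=\phi\cup\id_{D^2}$ acting on the closed surface $\wh\Sigma=\Sigma\cup D^2$ of genus $g\ge 2$, and the fibration of $X_K$ extends to a fibration of $\wh M$ with fibers $\wh\Sigma_\theta=\Sigma_\theta\cup D^2$. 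The curves of $\partial F$ become meridians of the filling solid torus, so $F$ can be capped off by meridian disks (placed at distinct $\theta$-levels) to give a closed surface $\wh F\subset\wh M$ with $\chi(\wh F)=\chi(F)+|\partial F|$. Capping off a boundary component induces an isomorphism $\C(\Sigma)\cong\C(\wh\Sigma)$ intertwining $\phi$ and $\wh\phi$, so $d_\C(\phi)=d_\C(\wh\phi)$, and it suffices to bound $d_\C(\wh\phi)$ in terms of $-\chi(F)$.

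I may assume $F$ is connected (this is the situation in which the lemma is applied; a $\partial$-parallel annulus component has $\chi=0$ and can be set aside); then, not being a collection of $\partial$-parallel annuli nor isotopic to a union of pages, $F$ is neither a $\partial$-parallel annulus nor a page. The key step is to show $\wh F$ is \emph{perfect} in the sense of Definition~\ref{perfect}. Since $\wh F\cap\wh\Sigma_\theta=F\cap\Sigma_\theta$ for a non-critical $\theta$, and a curve in $\Sigma_\theta$ is essential in $\wh\Sigma_\theta$ exactly when it is essential in $\Sigma_\theta$ (a curve peripheral in $\Sigma_\theta$ bounds the capping disk in $\wh\Sigma_\theta$), it is enough to show that $F\cap\Sigma_\theta$ contains a curve essential in $\Sigma_\theta$ for every page $\Sigma_\theta$ transverse to $F$. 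If not, then for some $\theta_0$ every curve of $F\cap\Sigma_{\theta_0}$ is trivial or peripheral in $\Sigma_{\theta_0}$; using that $F$ is incompressible and $X_K$ is irreducible, an innermost-disk isotopy removes all trivial curves from $F\cap\Sigma_{\theta_0}$, and the resulting incompressible surface meets $\Sigma_{\theta_0}$ only in peripheral curves, so by Lemma~\ref{aperipheral} it — hence $F$ — is isotopic to a union of pages, contradicting the choice of $F$. Thus $\wh F$ is perfect.

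It then remains to feed $\wh F$ into Lemma~\ref{closedbound}, whose proof uses only the fibration, simple closed curves, Euler characteristic, and Lemmas~\ref{trivial} and~\ref{saddles}, and so applies verbatim to the closed fibered manifold $\wh M$. As $\wh M$ is irreducible and $\wh\Sigma$ has genus $\ge 2$, a perfect sphere is impossible, so $\wh F$ is a torus or has genus $\ge 2$, whence $d_\C(\wh\phi)\le\max\{1,-\chi(\wh F)\}$. Since $|\partial F|\ge 1$ we have $-\chi(\wh F)=-\chi(F)-|\partial F|<-\chi(F)$, and since $F$ is a connected surface with non-empty boundary that is neither a disk nor an annulus, $-\chi(F)\ge 1$; therefore $d_\C(\phi)=d_\C(\wh\phi)\le-\chi(F)$.

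The main obstacle is the perfectness step: one must carry out the innermost-disk reduction carefully and verify the hypotheses of Lemma~\ref{aperipheral} — in particular the absence of $\partial$-parallel annulus components, which is why the reduction to connected $F$ is made first — and one should also record that the counting argument of Lemma~\ref{closedbound} transfers to the capped-off setting (routine, but it is stated there for a fibered link exterior rather than a closed mapping torus). An alternative that avoids the Dehn filling is to work with $F$ directly in (the zero-slope version of) regular position in $X_K$: generic pages then meet $F$ only in simple closed curves, the finitely many pages containing a boundary curve of $F$ merely create peripheral curves and so behave like inessential centers, and one repeats the tracking argument of Lemma~\ref{closedbound} using the bound $-\chi(F)$ on essential saddles from Lemma~\ref{saddles}.
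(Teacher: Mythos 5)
Your approach differs genuinely from the paper's---the paper works directly in $X_K$, invoking Theorem~4 of \cite{Thurs} to put $F$ in a position with exactly $-\chi(F)$ saddle tangencies and then chaining disjoint essential curves of intersection, whereas you cap off the zero-slope boundary and reduce to Lemma~\ref{closedbound} in the closed mapping torus $\wh M$---but there is a real gap in the reduction. The natural map $\C(\Sigma)\to\C(\wh\Sigma)$ induced by filling in the boundary is a simplicial \emph{surjection}, not an isomorphism: essential curves in $\Sigma$ that differ by a point-pushing mapping class (the Birman kernel of $\mathrm{MCG}(\Sigma)\to\mathrm{MCG}(\wh\Sigma)$) are distinct in $\C(\Sigma)$ but become isotopic in $\wh\Sigma$. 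Consequently one only has $d_\C(\wh\phi)\le d_\C(\phi)$, and equality can fail; for instance if $\phi$ is a pseudo-Anosov point-pushing map then $\wh\phi=\id$, so $d_\C(\wh\phi)=0$ while $d_\C(\phi)\ge1$. Since you need an \emph{upper} bound on $d_\C(\phi)$, bounding $d_\C(\wh\phi)$ from above is the useless direction, and the inference ``$d_\C(\phi)=d_\C(\wh\phi)\le-\chi(F)$'' does not follow as stated.

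The gap is repairable along your route, but it needs an argument you have elided: run the proof of Lemma~\ref{closedbound} on $\wh F$ and observe that the curves $\alpha_0,\dots,\alpha_m$ it produces actually live in $\wh F\cap\wh\Sigma_{t_i}=F\cap\Sigma_{t_i}\subset\Sigma_{t_i}$ once the $t_i$ are chosen away from the $\theta$-levels of the capping disks; that a curve essential in $\wh\Sigma_{t_i}$ is automatically essential (non-trivial and non-peripheral) in $\Sigma_{t_i}$; and that the disjoint position of $\alpha_i$ and $\alpha_{i+1}$ in $\wh\Sigma_{t_i}$ survives pushing $\alpha_{i+1}$ off the capping disk $D\subset\wh\Sigma_{t_i}$, since $\alpha_i$ is already disjoint from $D$. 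This yields $d_\C(\alpha_i,\alpha_{i+1})\le1$ in $\C(\Sigma)$ itself and hence $d_\C(\phi)\le-\chi(\wh F)<-\chi(F)$, which is what you want (and slightly better). As written, however, the pivotal equality $d_\C(\phi)=d_\C(\wh\phi)$ is false and the proof does not close. Your closing ``alternative''---working with $F$ directly in $X_K$ using Lemma~\ref{saddles}---is closer to the paper's actual argument, though the paper bypasses the regularity bookkeeping by quoting Thurston's position theorem outright.
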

\begin{proof}
Fix a connected component $F'$ of $F$ which is neither a $\partial$-parallel annulus nor isotopic to a page. Since $F$ is incompressible, $F'$ is also incompressible. By Lemma \ref{standa}, $F'$ intersects every page $\Sigma_\theta$ of $K$. By Theorem 4 in \cite{Thurs}, we can isotope $F'$ in $X_K$ so that, away from its boundary, $F'$ is transverse to all but $m=-\chi(F')\le -\chi(F)$ pages, say $\Sigma_{\theta_1}, \ldots, \Sigma_{\theta_m}$, where $0<\theta_1< \ldots < \theta_m<2\pi$, and $F'$ is transverse to each $\Sigma_{\theta_i}$ except for a single saddle tangency.
Choose numbers  $t_0=0<t_1<\ldots<t_{m-1}<t_m = 2\pi$ such that each $t_i$ is in $(\theta_i,\theta_{i+1})$ for $i=1,\ldots, m-1$. It follows that 
\begin{enumerate}
	\item Each simple closed curve in $F'\cap \Sigma_{t_i}$ is non-trivial in $\Sigma_\theta$, since a trivial curve would yield a center tangency, and
	\item For any $i=0,\ldots,m$, at least one curve of intersection in $F'\cap \Sigma_{t_i}$ is non-peripheral in $\Sigma_{\theta}$, for otherwise $F'$ would be a $\partial$-parallel annulus or isotopic to a page by Lemma \ref{aperipheral}.
\end{enumerate}
For $i=0,1,\ldots,m$, choose a curve $\alpha_i$ in each $F'\cap \Sigma_{t_i}$ that is essential in $\Sigma_{t_i}$ while ensuring that $\phi(\alpha_0)=\alpha_m$. Observe that for each $i=0,\ldots,m-1$, there is only a single saddle tangency of $F'$ in $\Sigma\times [t_i,t_{i+1}]$. Therefore, we can isotope $\alpha_{i+1}$ into $\Sigma_{t_{i}}$ so that it is disjoint from $\alpha_i$. In other words, for each $i=0,\ldots,m-1$, we have $d_\C(\alpha_i,\alpha_{i+1})\le 1$. Thus, by the triangle inequality, we get
$$d_{\C}(\phi)\le d_{\C}(\alpha_0,\phi(\alpha_0))= d_{\C}(\alpha_0,\alpha_m)\le \sum^{m-1}_{i=0}d_\C(\alpha_{i},\alpha_{i+1})\le m=-\chi(F')\le -\chi(F),$$
as desired.
\end{proof}
Before the proof of Theorem \ref{KinP} for non-primitive knots, we will introduce one more lemma.
\begin{Lem}\label{core}
Let $(P,U,V)$ be a strongly irreducible Heegaard splitting in $M$ and $K$ a knot that lies in $P$. If the surface $F = P\setminus \mathring{N}(K)$ can be compressed in $U$ or $V$ to an annulus $A$ that is $\partial$-parallel in $X_K$, then $K$ is primitive in $U$ or $V$, respectively.
\end{Lem}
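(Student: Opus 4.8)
The plan is to reconstruct the handlebody $U$ out of the compression data together with the product region cut off by $A$, and then to recognise a compression‑body structure on $U\setminus\mathring{N}(K)$ with $\partial N(K)$ as negative boundary --- which is exactly what it means for $K$ to be a core of $U$.

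First I would fix notation near $K$: write $U_K=U\cap X_K$ and $V_K=V\cap X_K$, which are handlebodies homeomorphic to $U$ and $V$, and $\partial X_K=A_U\cup A_V$ with $A_U=\partial N(K)\cap U$ and $A_V=\partial N(K)\cap V$ annuli meeting along $\partial F=\partial A_U=\partial A_V$, two parallel curves of the surface--framing slope of $K$. Since that slope runs once around $K$, the solid torus $N(K)\cap U$ contains $A_U$ as a longitudinal annulus in its boundary (similarly for $V$). If $F$ were disconnected (that is, if $K$ separated $P$) it could not be compressed to a single annulus, so $K$ is non‑separating in $P$, $F$ is connected, and $\chi(F)=\chi(P)$; we may assume $\chi(P)<0$, the remaining cases being immediate.

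Now suppose $F$ is compressed in $U$ to the $\partial$-parallel annulus $A$. The compressions take place inside $U_K$ and produce a compression body $C\subset U_K$ with $\partial_+C=F$ and $\partial_-C=A$ --- here one uses that a $\partial$-parallel annulus in $X_K$ is incompressible there, its core being a non‑meridional slope, which is guaranteed by strong irreducibility of $P$ (it excludes $X_K$ from being a solid torus with that slope as meridian). The annulus $A$ cuts $U_K$ into $C$ and a complementary region $Y$ with $\partial Y=A\cup A_U$ a torus; since $U_K$ is irreducible and $C$ is neither a solid torus nor a ball (as $\partial_-C\neq\emptyset$), the torus $\partial Y$ bounds a solid torus on the $Y$--side, so $Y$ is a solid torus. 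The same $A$ cuts $X_K$ into $Y$ on one side and $Z:=C\cup_F V_K$ on the other. Since $A$ is $\partial$-parallel it cobounds a product region $W\cong A\times I$ with $A_U$ or with $A_V$, and $W$ is one of $Y,Z$. If $W=Y$ then $U_K=C\cup_A(A\times I)$ is $C$ with a collar attached along $\partial_-C$, hence homeomorphic to the compression body $C$, which is impossible because $U_K$ is a handlebody; so $W=Z$ and the parallelism is to $A_V$.

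To finish, I would glue $N(K)$ back in. Since $A_U$ is a longitudinal annulus in the solid torus $N(K)\cap U$, attaching $N(K)\cap U$ to $Y$ along $A_U$ is again a collar attachment, so $Y':=Y\cup(N(K)\cap U)$ is a solid torus whose core is a pushoff $K'$ of $K$ into $\mathring{U}$. Then $U=C\cup_A Y'$, and
\[
U\setminus\mathring{N}(K')=C\cup_A\bigl(Y'\setminus\mathring{N}(K')\bigr)=C\cup_A(T^2\times I),
\]
where $Y'\setminus\mathring{N}(K')\cong T^2\times I$ has $\partial N(K')$ as one boundary torus and carries the annulus $A$ on the other. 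Attaching $C$ along $A$ is a collar attachment followed by the one--handles of $C$, so $U\setminus\mathring{N}(K')$ is a compression body with negative boundary $\partial N(K')$; that is the assertion that $K$ is a core of $U$, and the argument with $V$ in place of $U$ is identical. The step I expect to be the main obstacle is the region--tracking --- deciding which of the two pieces $A$ cuts off from $X_K$ is the product region $W$, and carrying out the ``a collar attachment does not change the homeomorphism type'' moves honestly in the presence of the corners along $\partial A=\partial F$ --- together with making precise the role of strong irreducibility (via Scharlemann's no--nesting lemma, Lemma \ref{nonest}) in guaranteeing that $A$ is genuinely incompressible and $C$ an honest compression body.
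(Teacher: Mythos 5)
Your approach is genuinely different from the paper's. The paper's proof is short and local: take a $\partial$-compressing disk $\Delta$ of the $\partial$-parallel annulus $A$, extend it across $N(K)$ by a meridional half-disk to obtain a disk $D\subset M$ with $\partial D\subset P$ meeting $K$ exactly once, and then invoke Scharlemann's no-nesting Lemma~\ref{nonest} to replace $D$ by a disk $D'$ in a single handlebody; since $\partial D'$ meets $K$ once, $K$ is primitive, hence a core. Your proof instead reconstructs $U$ from the pieces $C$, $Y$, and $N(K)\cap U$ and exhibits a compression-body structure on the complement of a pushed-in copy of $K$. That is a legitimate alternative route in principle, but as written it contains a genuine error, and it is exactly at the step you yourself flag as the likely obstacle.

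The error is in the region-tracking. You dismiss the case $W=Y$ on the grounds that $U_K\cong C$ would then be a compression body with $\partial_-C=A\neq\emptyset$ and hence not a handlebody. But $C$ is \emph{not} a compression body in the standard sense: $F$ has boundary, so $C=N(F)\cup(\text{2-handles})$ has vertical boundary along $\partial F\times I$ and its boundary $F\cup(\partial F\times I)\cup A$ is a single closed genus-$g$ surface. Nothing prevents such a $C$ from being a handlebody, and in fact it is one (precisely because $U_K$ is and $Y\cong A\times I$). So the $W=Y$ case is not excluded. Worse, the case you do retain, $W=Z$, is the impossible one: if $Z=C\cup_F V_K\cong A\times I$, then $X_K=\bar Y\cup_A\bar Z\cong \bar Y$ is a solid torus; but in every application of this lemma in the paper $K$ is a fibered knot with pages of genus at least two, and a surface bundle over $S^1$ with fiber of genus $\ge2$ is never a solid torus (and, independently, a solid-torus exterior forces $g(M)\le1$, incompatible with a strongly irreducible $P$ of genus $\ge2$). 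So the correct conclusion is $W=Y$, i.e.\ the parallelism is to $A_U$, the opposite of what you wrote. This also repairs a second gap in your write-up: your claim that the core of $Y'=Y\cup(N(K)\cap U)$ is a pushoff $K'$ of $K$ requires $A_U$ to be longitudinal in $Y$. If $W=Z$ there is no reason for this; if $W=Y$ it is automatic, because $Y\cong A\times I$ is a solid torus with $A_U$ as one of its longitudinal boundary annuli. With the case analysis corrected, the remainder of your computation --- $Y'\cong N(K)\cap U$, $U\setminus\mathring N(K')=C\cup_A(T^2\times I)$, and the identification of this as a compression body with $\partial_-=\partial N(K')$ via the compressing disks of $F$ --- does go through, though it is considerably longer than the paper's one-disk argument and still needs the no-nesting Lemma (to rule out compressions of $C$ in $V$, i.e.\ to justify that $F$ compresses only to one side).
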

\begin{proof}
The proof is symmetric with respect to $U$ and $V$. Therefore, we will give a proof only for $U$. Since $A$ is $\partial$-parallel in $X_K$, it is $\partial$-compressible. Let $\Delta\subset X_K$ be a $\partial$-compressing disk for $A$, where $\partial \Delta$ is a union of arcs $\alpha$ and $\beta$ such that $\alpha=\partial \Delta\cap A$ and $\beta=\partial \Delta\cap \partial X_K$. Isotope $\alpha$ away from the disks in $A$ that are introduced by the compressions of $F$ so that $\alpha$ lies in $P$ (when we undo the compressions).

Let $B$ be the annulus component of $\partial X_K\setminus \partial A$ that contains $\beta$. Now the annulus $B$ equals either $\partial X_K\cap U$ or $\partial X_K\cap V$. In each case, we can find a half disk $\Delta'$ in $N(K)\cap U$ (or in $N(K)\cap V$) such that $\partial \Delta'=\beta\cup \beta'$, where $\beta'$ is a spanning arc for the annulus $B'=N(K)\cap P$. (This is because the core of $B$ is an intergral slope in $\partial X_K$.) Concatenating $\Delta$ and $\Delta'$ along $\beta$, we obtain a disk $D\subset M$ such that $\partial D=\alpha\cup \beta'$ is a simple closed curve in $P$ that intersects $K$ (which is the core of $B'$) exactly once. Since $P$ is strongly irreducible, it follows from Lemma \ref{nonest} that $\alpha\cup \beta'$ bounds a disk $D'\subset U$. Finally, since $\partial D'$ intersects $K$ exactly once, $K$ is primitive in $U$.
\end{proof}
We conclude this section with the proof of Theorem \ref{KinP} when $K$ is not primitive.

\subsection{Proof of Theorem \ref{KinP} for non-primitive knots.}
Assume that the assumptions stated in Theorem \ref{KinP} hold and $K$ is primitive in neither $U$ nor $V$. Consider the properly embedded surface $F=P\setminus \mathring N(K)$ in $X_K$. Notice that $F$ has two boundary components of a non-meridional slope in $\partial X_K$. Since $F$ is obtained by removing an annulus from $P$, we have $\chi(F)=\chi(P)=2-2g$. Now we provide the proof by a case analysis depending on the compressibility of $F$ in $X_K$, and in each case we show that either Condition (1) or Condition (2) asserted in Theorem \ref{KinP} holds. \v

\noindent \textbf{Case 1.} $F$ is incompressible in $X_K$: In this case, we have two subcases depending on the boundary slopes of $F$.\vspace{2mm}

\noindent \textbf{Subcase 1.} $F$ realizes the zero slope: In this case, first note that $F$ cannot be an annulus beacuse that would imply $P$ is a torus, which is ruled out by the assumption that $g(P)\ge 2$. Then, by Lemma \ref{zeroslope}, either $d_\C(\phi)\le-\chi(F)=2g-2$, i.e., (2) holds, or $F$ is isotopic to a union of two pages. If $F$ is isotopic to a union of two pages, then $P$ is isotopic to the Heegaard surface induced by $K$, i.e., (1) holds.\vspace{2mm}

\noindent \textbf{Subcase 2.} $F$ realizes a non-zero slope: In this case, it directly follows from Lemma \ref{nonzeroslope} that $d_\A(\phi)\le -\chi(F)=2g-2$, and hence (2) holds.
\vspace{2mm}

\noindent\textbf{Case 2.} $F$ is compressible in $X_K$: In this case, by Lemma \ref{nonest}, there exists a compressing disk of $F$ that lies in one of the handlebodies $U$ or $V$. With no loss of generality, assume that there exists a compressing disk for $F$ in $U$. Let $G\subset X_K$ be the surface obtained by maximally compressing $F$ in $U$.
\begin{Clm}\label{G}
Every non-sphere component of $G$ is incompressible in $X_K$.
\end{Clm}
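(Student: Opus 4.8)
The plan is to argue by contradiction using the strong irreducibility of $P$ together with Scharlemann's no-nesting Lemma (Lemma~\ref{nonest}), exploiting the fact that $G$ was obtained by a \emph{maximal} sequence of compressions of $F$ in the handlebody $U$. First I would set up the situation carefully: by hypothesis all compressing disks used to produce $G$ lie in $U$, so $G$ is a properly embedded surface in $X_K$ obtained from $F=P\setminus N(K)$ by compressing along a maximal collection of pairwise disjoint compressing disks contained in $U$; ``maximal'' means that after these compressions, $G$ has no compressing disk in $U$. Suppose toward a contradiction that some non-sphere component $G_0$ of $G$ is compressible in $X_K$, and let $E\subset X_K$ be a compressing disk for $G_0$, chosen so that $\partial E$ is essential in $G_0$.

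The key step is to locate $\partial E$ relative to the two handlebodies. Since $\partial E$ is a simple closed curve on $G_0\subset X_K\subset M$ that bounds the disk $E$ in $M$, and since $G_0$ is obtained from (part of) the strongly irreducible Heegaard surface $P$ by compressions into $U$, I would push $G$ back to a position close to $P$: away from the scars of the compressions, $G$ agrees with $P$, and each compression scar is a pair of parallel disks lying just inside $U$. One can then isotope $\partial E$ off these scar disks so that $\partial E$ actually lies on $P$ (this uses that $G_0$ is not a sphere, so no component of it is a scar disk, and an innermost-disk argument to remove intersections of $E$ with the scar disks). Now $\partial E\subset P$ bounds a disk in $M$, so by Lemma~\ref{nonest} it bounds a properly embedded disk $D$ in $U$ or in $V$. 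If $D\subset U$, then $D$ (after the usual innermost-circle cleanup to make it disjoint from $G$) is a compressing disk for $G$ lying in $U$ with boundary essential on $G_0$ — because $\partial E$ was essential on $G_0$ — contradicting the maximality of the compression sequence. If instead $D\subset V$, then since the compressions only removed material from the $U$-side, $D$ is disjoint from the scar disks and so $D\cup(\text{parallel copies})$ still lies in $V$; together with $E\subset X_K$ this exhibits an essential curve on $G_0$ bounding disks on opposite sides in a way that either contradicts strong irreducibility of $P$ directly or reduces to the $D\subset U$ case after an isotopy — more precisely, $E$ and $D$ together form a sphere, and tracing $\partial D$ back to $P$ gives a weak reduction of $P$ unless $\partial D$ is inessential on $P$, in which case $E$ could have been isotoped away.

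The main obstacle I anticipate is the bookkeeping involved in pushing $\partial E$ back onto $P$ and keeping track of which side of $P$ the disk $E$ (or its relevant subdisks) lies on after the compressions — in particular making sure that an essential curve on $G_0$ remains essential on the appropriate surface, and that the resulting compressing disk genuinely contradicts maximality rather than being a disk parallel into $G$. I would handle this with a standard innermost-disk / outermost-arc argument to arrange $E\cap(\text{scars})=\emptyset$, and I would emphasize that ``maximal'' is chosen so that \emph{no} essential compression in $U$ remains, which is exactly what the $D\subset U$ case violates. The case $D\subset V$ is where strong irreducibility (not just irreducibility) of $P$ is essential, since it rules out a compressing disk for $G$ on the $V$-side whose boundary is essential on the $P$-part of $G$, as that would combine with a $U$-compression to weakly reduce $P$.
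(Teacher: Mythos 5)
Your argument follows the paper's proof essentially step for step: isotope the boundary of the compressing disk off the scar disks and back onto $P$, invoke Scharlemann's no-nesting lemma to place the disk in $U$ or $V$, and then derive a contradiction from maximality when the disk lies in $U$ and from strong irreducibility when it lies in $V$. The only place you wobble is the $V$-side case; the clean finish (as in the paper) is simply that $\partial D = \partial E$ lies in $F\cap G_0$ and hence is disjoint from the boundaries of the compressing disks used to produce $G$, so $D\subset V$ together with any one of those $U$-disks weakly reduces $P$ --- no sphere construction or side-case about whether $\partial D$ is inessential on $P$ is needed.
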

\begin{proof} Assume for a contradiction that there is a non-sphere component $S$ of $G$ that is compressible in $X_K$. Let $\gamma\subset S$ be a curve that bounds a compressing disk $D$ for $S$ in $X_K$. We can isotope $\gamma$ into $F\cap S$ because $S\setminus F$ is a union of disks in $S$ (which are introduced by the compressions of $F$ in $U$). Hence, $\gamma$ is an essential curve on the strongly irreducible Heegaard splitting $P$ that bounds a disk $D$ in $X_K$. By Lemma \ref{nonest}, $D$ can be assumed to lie in either $U$ or $V$ completely. If $D\subset U$, then $F$ is not maximally compressed in $U$, which is a contradiction. If $D\subset V$, then $D$ is an essential disk in $V$ that is disjoint from the compressing disks of $F$ in $U$, which contradicts the strong irreducibility of $P$.
\end{proof}
Notice that $\chi(G)>\chi(F)=2-2g$ and $G$ has two boundary components since it is obtained from $F$ by compressions. Let $S$ be the union of the components of $G$ that contains the boundary. Since $G$ is incompressible in $X_K$, so is $S$. Notice that $S$ cannot be a union of two pages, for otherwise the strongly irreducible Heegaard surface $P$ would be compressed in $U$ into the Heegaard surface induced by $K$, which is impossible by Lemma \ref{compressions}. Moreover, $S$ is not a $\partial$-parallel annulus in $X_K$, for otherwise Lemma \ref{core} would imply that $K$ is primitive in $U$.
Finally, depending on the boundary slope of $S$, Lemmas \ref{nonzeroslope} and \ref{zeroslope} imply that we have $d_{\AC}(\phi)\le -\chi (S)\le -\chi (G)< 2g-2$, i.e., (2) holds.\qed


\section{Primitive Fibered Knots on Strongly Irreducible Heegaard Surfaces} \label{Primitive}

The discussion so far leaves one case to discuss for a complete proof of Theorem \ref{KinP}: $(P,U,V)$ is a strongly irreducible Heegaard splitting and $K$ is a fibered knot in $M$ such that $K$ is a core in $U$ or $V$ up to isotopy. In this section, we will prove Theorem \ref{KinP} under these assumptions.

In Section \ref{EssentialSurfaces}, we showed that if there exists a closed surface $S\subset X_K$ that is incompressible in $M$, then $d_{\C}(\phi)\le 2g(S)-2$. In this section, we will achieve a similar complexity bound when there is a Heegaard splitting $P\subset X_K$ that is strongly irreducible in $M$. We will generalize the result of Section \ref{EssentialSurfaces} from closed incompressible surfaces to strongly irreducible Heegaard splittings, by using the double sweepout technique along with a labelling, similar to \cite{BachmanSchleimer} and \cite{Li}. Some arguments will be very similar to Section \ref{ThinPosition} and we will give short explanations for such arguments.  We will also refer to the figures of Section \ref{ThinPosition}. First, we will introduce literature, notation, and some useful lemmas. The proof will be presented at the end of this section.\vspace{2mm}

\subsection{Intersection graphics of surface families}

Assume that $(P,U,V)$ is a strongly irreducible Heegaard splitting of $M$ and $K$ is a core in $U$. We denote the Heegaard splitting of $X_K$ determined by $P$ by $(P,U',V)$, where  $U'$ is the compression-body obtained by removing an open tubular neighborhood $\mathring{N}(K)$ of $K$ from $U$. A \emph{spine} of $U'$, denoted by $G_{U'}$, is a wedge of $\partial_{-}U'=\partial X_K$ with a spine of a genus $g-1$ handlebody embedded in $U'$ such that $U'\setminus G_{U'}$ is homeomorphic to $P\times (0,1]$.

 For fixed spines $G_{U'}$ of $U'$ and $G_V$ of $V$, a \emph{sweepout} of the Heegaard splitting $(P,U',V)$ is a smooth function $H: P\times I\to X_K$ such that $H(P\times \{0\})= G_{U'}$, $H(P\times \{1\})= G_V$, and $H(P\times \{t\})$ is isotopic to $P$ for any $t\neq 0,1$. For simplicity, we will denote $H(P\times \{t\})$ by $P_t$, and we will not distinguish $H(P\times(0,1))$ from $P\times (0,1)$. For any $t\in (0,1)$, let
\begin{itemize}
	\item[(a)] $U_t'$ denote the compression-body $P\times [0,t]$ bounded by $P_t$ in $X_K$,
	\item[(b)] $U_t$ denote the handlebody $U_t'\cup N(K)$ bounded by $P_t$ in $M$, and
	\item[(c)] $V_t$ denote the handlebody $P\times [t,1]$ bounded by $P_t$ in $M$.
\end{itemize}
One can isotope the pages $\Sigma_\theta$ in $X_K$ so that they are \emph{standard} with respect to $P_t$, i.e., 
\begin{itemize}
	\item there exists a page $\Sigma_{\theta}$ that is transverse to the spine $P_1=G_{V}$, and 
	\item the pages are transverse to $\partial X_K$ and the level surfaces $P_t$ near the spine $P_0=G_{U'}$.
\end{itemize}
 Moreover, by Cerf theory \cite{Cerf}, the pages $\Sigma_\theta$ can be further isotoped so that the families $\Sigma_{\theta}$ and $P_t$ are in \emph{Cerf position}, that is, the set
$$\Lambda=\{(\theta,t)\in S^1\times (a,b)\,|\,\Sigma_\theta\text{ is not transverse to }P_t\}$$ 
is a one-dimensional graph in the open annulus $A=S^1\times (0,1)$ satisfying the properties (1)-(6) provided in Subsection \ref{Cerf}. Similar to Section \ref{ThinPosition}, the graph $\Lambda$ is called an \emph{intersection graphic} of the families $\Sigma_\theta$ and $P_t$, and a connected component of the $A\setminus \Lambda$ is called a \emph{region} of $A\setminus \Lambda$.\vspace{2mm}

\noindent\textbf{Assumption.} For the rest of this section, assume that $K$ is a fibered knot in $M$ with pages $\Sigma_\theta$ of genus greater than one, $(P,U,V)$ is a strongly irreducible Heegaard splitting of $M$, and $K\subset P$ is a core in $U$. Let $P_t$, $t\in[0,1]$, be a sweepout of $P$ in $X_K$ 
such that the families $\Sigma_{\theta}$ and $P_t$ of are in a Cerf position providing an intersection graphic $\Lambda$ in the annulus $A=S^1\times (0,1)$, satisfying the properties mentioned above.

\subsection{Labelling} 
We label a level surface $P_t$ with $U$ (resp. with $V$) if there exists a page $\Sigma_{\theta}$, which is transverse to $P_t$, such that every component of $\Sigma_{\theta}\cap P_t$ is an inessential curve in $\Sigma_{\theta}$ that is not disk-busting in the handlebody $U_t$ (resp. in $V_t$). Additionally, we label a region $R$ of $(A\setminus \Lambda)$ with $U$ (resp. with $V$) if there exists a point $(\theta,t)\in R$ such that every component of $\Sigma_{\theta}\cap P_t$ is an inessential curve in  $\Sigma_{\theta}$ that is not disk-busting in the handlebody $U_t$ (resp. in $V_t$).

In the proof of Theorem \ref{KinP} for a primitive knot $K$, we will eventually show that if $P$ is not isotopic in $M$ to the Heegaard surface induced by $K$, then there exists a level surface $P_s$, which is not labelled. Such a surface will behave similarly to a perfect surface in $X_K$ and help us achieve a complexity bound, similar to the proof of Theorem \ref{ThinTheorem}. In this subsection, we will prove the following lemma which serves that purpose.

\begin{Lem}\label{bothUV}
If there exists a level surface $P_t$ that is labelled with both $U$ and $V$, then $P_t$, and therefore $P$, is isotopic to the Heegaard surface induced by $K$.
\end{Lem}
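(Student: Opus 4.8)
The plan is to show that if a level surface $P_t$ carries both labels $U$ and $V$, then we can produce pages $\Sigma_\theta$ and $\Sigma_{\theta'}$ on opposite sides of $P_t$ that are each disjoint from $P_t$ after trivial compressions, and then use strong irreducibility together with Scharlemann's no-nesting lemma (Lemma~\ref{nonest}) to conclude that $P_t$ can be isotoped to lie entirely between two pages, forcing it to be the Heegaard surface induced by $K$.

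First I would unpack the two labels. Suppose $P_t$ is labelled with $U$ via a page $\Sigma_\theta$: every component of $\Sigma_\theta\cap P_t$ is inessential in $\Sigma_\theta$ and not disk-busting in the handlebody $U_t$. An innermost such curve bounds a disk in $\Sigma_\theta$ on one side and, since it is not disk-busting in $U_t$, it also bounds a disk in $U_t$; if the $\Sigma_\theta$-side disk is disjoint from $P_t$ we can use it to isotope $\Sigma_\theta$ across $P_t$, and if not we compress $\Sigma_\theta$ (or $P_t$) inward. Running the standard innermost-disk/outermost-curve argument, I would argue that $\Sigma_\theta$ can be isotoped, using only disks in $U_t$ and subdisks of $\Sigma_\theta$, off of $P_t$, i.e.\ into the handlebody $V_t$ or into $U_t$. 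The ``not disk-busting'' hypothesis is exactly what guarantees the curves we meet bound disks in $U_t$ rather than being obstructions, so the compressions stay inside $U_t$ and never violate strong irreducibility. Symmetrically, from the $V$-label we get a page $\Sigma_{\theta'}$ that can be pushed off $P_t$ using disks in $V_t$.

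Next I would combine the two. Since a page $\Sigma_\theta$, after these isotopies, lies in (say) $V_t$ and a page $\Sigma_{\theta'}$ lies in $U_t$ (the cases where both land on the same side are handled by noting that then $P_t$ would be compressible in both $U$ and $V$, contradicting strong irreducibility, or directly yield a reduction), we have found a page on each side of $P_t$. The pages of a fibered knot foliate $X_K$, so the complementary region of $\Sigma_\theta\cup\Sigma_{\theta'}$ in $X_K$ containing $P_t$ is a product $\Sigma\times[\theta,\theta']$ (one of the two product pieces cut off by the two pages). A Heegaard surface of $X_K$ sitting inside $\Sigma\times I$ with both boundary curves on $\partial X_K$ of integral slope, once we know it is isotopic off both horizontal boundary components, must be isotopic to a horizontal level $\Sigma\times\{*\}$: here I would invoke the structure of incompressible surfaces in $\Sigma\times I$ (Lemma~\ref{standa} / Proposition 3.1 of \cite{FW}) after first noting that $P$ having been compressed down would contradict Casson--Gordon (Theorem 2.1 of \cite{CasGo}) unless $P_t$ itself is already the page-induced surface. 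Hence $P_t$, and therefore $P$, is isotopic to the Heegaard surface induced by $K$.

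The main obstacle I anticipate is making the ``push the page off $P_t$'' step rigorous while staying compatible with strong irreducibility: the curves of $\Sigma_\theta\cap P_t$ can be nested and some may be essential in $P_t$ even though inessential in $\Sigma_\theta$, so one must carefully alternate between compressing $P_t$ along disks it bounds (legitimate by Lemma~\ref{nonest}, since such a curve bounds a disk in $U$ or $V$) and isotoping $\Sigma_\theta$, all the while tracking that the ``not disk-busting'' condition is preserved under these moves so the induction on the number of intersection curves actually terminates. Once the page is disjoint from $P_t$ on each side, the product-region argument is routine, so the delicate part is entirely in this innermost-disk bookkeeping.
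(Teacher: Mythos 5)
Your proposal contains a fundamental misreading of the labelling, which makes the main step fail. You write ``since it is not disk-busting in $U_t$, it also bounds a disk in $U_t$,'' but not disk-busting does not mean bounds a disk: it means there \emph{exists} some essential disk in $U_t$ disjoint from the curve. There is no reason an innermost component of $\Sigma_\theta\cap P_t$ should bound a disk in a handlebody of the splitting, and in fact the paper's own Claim~\ref{bicompressible} shows the opposite phenomenon: after cleaning up, some components must be disk-busting. This misreading powers your whole ``push the page off $P_t$'' step, which is impossible anyway: a page of $K$ can never be isotoped off a Heegaard surface of $X_K$ (this is exactly Claim~\ref{pagesandP} in the paper), and a page has boundary on $K$, so it cannot sit inside the handlebody $V_t\subset M$ that misses $K$. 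Also note that the curves of $\Sigma_\theta\cap P_t$ being inessential allows them to be peripheral, not just trivial, so innermost-disk arguments alone do not dispose of them; one needs something like the annulus-surgery/peripheral-curve bookkeeping the paper does via Claim~\ref{eliminate}.

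The paper's actual route keeps the two pages $\Sigma_U$, $\Sigma_V$ fixed. It first shows (by strong irreducibility) that no component of $\Sigma_U\cap P_t$ or $\Sigma_V\cap P_t$ bounds an essential disk in $U_t$ or $V_t$, then uses Claim~\ref{eliminate} to remove trivial curves so that all remaining intersection curves are peripheral. By Claim~\ref{bicompressible}, each page then contributes a curve that is disk-busting in the \emph{other} handlebody ($\gamma_U$ disk-busting in $U_t$, $\gamma_V$ disk-busting in $V_t$). These two disk-busting curves, combined with Scharlemann's no-nesting lemma, force $F=P_t\cap N$ (where $N$ is the complement of a neighborhood of $\Sigma_U\cup\Sigma_V$) to be incompressible in the product $N\cong\Sigma\times I$. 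Waldhausen's classification (Lemma~\ref{standa}) then says $F$ is a union of pages and $\partial$-parallel annuli, and since $P_t$ is connected and of genus $\ge 2$, it must be the surface induced by $K$. So your instinct to end up in a product region is sound, but the mechanism is to prove incompressibility of $P_t$ there, not to move the pages; and the key inputs you need are the disk-busting curves and no-nesting, not compressions along disks that (falsely) come from the not-disk-busting condition.
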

\noindent Before proving the lemma, we will introduce a few claims that will be useful. Since we have already fixed a Heegaard splitting $(P,U,V)$ for $M$, we will denote the Heegaard splittings by $(H,X,Y)$  instead of $(P,U,V)$ in the statements, to avoid confusion.
\begin{Clm}\label{pagesandP}
If $(H,X,Y)$ is a Heegaard splitting of $M$, and $K\subset M$ is a core in $X$ (or $Y$), then any page of $K$ intersects $H$.
\end{Clm}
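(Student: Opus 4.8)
The plan is to argue by contradiction: suppose some page $\Sigma_\theta$ of $K$ is disjoint from $H$, and derive a contradiction from the incompressibility of the fiber together with the structure of the two sides of the splitting. First I would set up notation, writing the Heegaard splitting $X_K=X\cup_H Y$ determined by $H$. Since $\partial X_K$ is a single torus, exactly one of the two compression bodies, say $X$, has $\partial_-X=\partial X_K$, and then $Y$ is a handlebody with $\partial Y=H$. Because $\Sigma_\theta$ is connected and avoids $H$, it lies entirely in $X$ or entirely in $Y$. It cannot lie in $Y$: the handlebody $Y$ is disjoint from $\partial X_K$ (as $\partial Y=H$ sits in the interior of $X_K$), whereas $\partial\Sigma_\theta$ is a non-empty curve on $\partial X_K$. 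Hence $\Sigma_\theta\subset X$, and since $\Sigma_\theta\cap H=\Sigma_\theta\cap\partial_+X=\emptyset$ we get $\partial\Sigma_\theta\subset\partial_-X$, so $\Sigma_\theta$ is properly embedded in the compression body $X$. Recall that, being a fiber of the fibration $X_K\to S^1$ with $\chi(\Sigma)\le -3$, the surface $\Sigma_\theta$ is incompressible in $X_K$, hence in $X$.

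The main step is to push $\Sigma_\theta$ off a complete meridian disk system of $X$. I would take $\mathcal D=D_1\cup\dots\cup D_{g-1}$ to be the co-cores of the $1$-handles of $X$, so that $X\setminus N(\mathcal D)\cong\partial X_K\times I\cong T^2\times I$. The crucial observation is that $\partial\mathcal D\subset\partial_+X=H$; since $\Sigma_\theta\cap H=\emptyset$, the intersection $\Sigma_\theta\cap\mathcal D$ therefore consists only of simple closed curves lying in the interiors of the disks $D_i$. By a standard innermost-disk argument, using the incompressibility of $\Sigma_\theta$ and the irreducibility of $X$, each such curve bounds a disk in $\Sigma_\theta$ and can be removed by an isotopy; after finitely many steps $\Sigma_\theta$ is disjoint from $\mathcal D$, so $\Sigma_\theta$ is an incompressible surface properly embedded in $X\setminus N(\mathcal D)\cong T^2\times I$. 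But $\pi_1(T^2\times I)\cong\BZ^2$ is abelian while $\pi_1(\Sigma_\theta)$ is a non-abelian free group (of rank $2h\ge 4$, with $h\ge 2$ the genus of $\Sigma$), so $\Sigma_\theta$ cannot be $\pi_1$-injective in $T^2\times I$; equivalently, every incompressible surface in $T^2\times I$ has non-negative Euler characteristic, which contradicts $\chi(\Sigma_\theta)=\chi(\Sigma)\le -3$. This contradiction proves the claim.

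The step I expect to be the main obstacle — really a bookkeeping point that makes everything go through — is the setup in the first paragraph: correctly identifying which side of $H$ is the compression body containing $\partial X_K$ (and which side is the handlebody), and then noticing that the meridian disks of that compression body have their boundaries on $H$. Once that is in place the rest is forced: a page disjoint from $H$ meets the meridian system only in closed curves, incompressibility lets us delete all of them, and the page is trapped inside a copy of $T^2\times I$, where no surface of negative Euler characteristic can be incompressible.
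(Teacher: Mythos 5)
Your proof is correct, and it takes a genuinely different route than the paper's. The paper Dehn fills $X_K$ along the page slope, so the page caps off to a closed fiber $\widehat\Sigma_\theta$ of genus $\ge 2$; since $\widehat\Sigma_\theta$ avoids $H$ it would sit inside one of the handlebodies of the induced Heegaard splitting of the closed filled manifold, contradicting incompressibility of the fiber. You instead stay inside $X_K$: you correctly identify the compression body $X$ with $\partial_- X = \partial X_K$ as the side containing the page, note that the co-core disks $\mathcal D$ have $\partial\mathcal D \subset H$ (so that a page disjoint from $H$ meets $\mathcal D$ only in closed curves), clean these up with an innermost-disk argument using incompressibility of the page and irreducibility of $X$, and land $\Sigma_\theta$ in $X\setminus N(\mathcal D)\cong T^2\times I$, where no two-sided incompressible surface can have negative Euler characteristic. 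What the paper's route buys is brevity (the contradiction is immediate once one sees the Dehn filling); what yours buys is that it is entirely elementary and stays in the knot exterior, avoiding any appeal to the filled manifold, at the cost of the short innermost-disk bookkeeping step. One small point you may want to make explicit: the conclusion ``incompressible surfaces in $T^2\times I$ have nonnegative Euler characteristic'' relies on the standard fact (via the Loop Theorem) that a two-sided incompressible surface is $\pi_1$-injective; since you are applying this inside $T^2\times I$ after pushing $\Sigma_\theta$ off $\mathcal D$, you should note that a compressing disk in $T^2\times I$ would also be one in $X$, so incompressibility indeed persists in the sub-piece.
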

\begin{proof}
Assume for a contradiction that there exists a page $\Sigma_\theta$ of $K$ such that $\Sigma_\theta\cap H=\emptyset$. Since $K$ is a core in, say $X$, it follows that $H$ is a Heegaard splitting of $X_K$ that bounds the compression-body $X'=X\setminus \mathring{N}(K)$ on one side, and the handlebody $V$ on the other side.

By Dehn filling $X_K$ along the boundary of a page, we obtain a fibered three-manifold $\wh M$. Moreover, $H$ persists in $\wh M$ as a Heegaard surface since it bounds the pair of handlebodies $(\wh{X}, Y)$, where $\wh{X}=X'\cup\text{(the filling torus)}$. 
Hence, $H\subset \wh{M}$ is a Heegaard surface  that is disjoint from $\widehat\Sigma_\theta=\Sigma_\theta\cup\text{(a filling disk)}$. This implies that the fiber $\widehat \Sigma_\theta$ lies in a handlebody bounded by $H$ in $\wh M$, which contradicts the incompressibility of the fiber.
\end{proof}
\begin{Clm}\label{bicompressible} Assume that $(H,X,Y)$ is a strongly irreducible Heegaard splitting of $M$, and $K$ is a core in $X$ (or $Y$). Let $\Sigma_{\theta}$ be a page of $K$ such that $ \Sigma_\theta\cap H$ is a collection of simple closed curves that are peripheral in $\Sigma_{\theta}$. Then at least one component of $\Sigma_\theta\cap H$ is disk-busting in either $X$ or $Y$.
\end{Clm}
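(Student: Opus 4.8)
The plan is to argue by contradiction: suppose that $\Sigma_\theta \cap H$ is a collection of peripheral curves in $\Sigma_\theta$, and that \emph{no} component of $\Sigma_\theta \cap H$ is disk-busting in $X$ or in $Y$. Since each curve $c$ of $\Sigma_\theta \cap H$ is peripheral in $\Sigma_\theta$, it is in particular inessential in $\Sigma_\theta$, so it bounds a disk in $M$ (namely the annulus it cuts off from $\Sigma_\theta$ together with a meridian disk of $N(K)$, capped appropriately — more directly, a peripheral curve of $\Sigma_\theta$ together with the innermost annulus it cuts off is isotopic to a curve bounding a disk in $M$ once we note $K$ is a core of $X$). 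Hence by Scharlemann's no-nesting lemma (Lemma~\ref{nonest}), each component of $\Sigma_\theta \cap H$ bounds a properly embedded disk in $X$ or in $Y$. First I would partition $\Sigma_\theta \cap H$ into those curves bounding disks in $X$ and those bounding disks in $Y$; the non-disk-busting hypothesis will be used to guarantee we can actually compress $\Sigma_\theta$ off $H$ without the two compression systems interfering.

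The core of the argument is a standard "isotope the page off the Heegaard surface" move. Using the non-disk-busting hypothesis on the $X$-side curves, I would find compressing disks in $X$ for those curves whose interiors are disjoint from $H$ (an innermost-disk argument inside $X$, using that a non-disk-busting curve bounds a disk in a region not cutting all handles), and similarly on the $Y$-side; then I would surger $\Sigma_\theta$ along all these disks to remove every intersection curve with $H$. Because every curve of $\Sigma_\theta \cap H$ is peripheral, each such surgery either removes a trivial curve (an innermost-disk isotopy) or cuts off an annulus that gets pushed across $\partial X_K$ — exactly the annulus-surgery type move from Section~3 — so the result is an \emph{isotopic} copy of $\Sigma_\theta$ rather than a genuinely different surface. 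After finitely many such moves we obtain a page $\Sigma_\theta'$, isotopic to $\Sigma_\theta$ in $X_K$, with $\Sigma_\theta' \cap H = \emptyset$. But then $\Sigma_\theta'$ lies entirely in $X$ or entirely in $Y$, and since $K$ is a core of $X$, in the former case $\Sigma_\theta'$ lies in the handlebody $X \setminus \mathring N(K) = X'$ (a compression body) or in $Y$. Either way, the incompressible fiber $\Sigma_\theta$ of genus $\ge 2$ is contained in a compression body / handlebody, forcing it to be compressible there (or $\partial$-parallel, which a genus $\ge 2$ fiber cannot be), contradicting incompressibility of the fiber — this is the same contradiction used in Claim~\ref{pagesandP}.

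The main obstacle is making the simultaneous surgery on both sides legitimate: one must ensure that after compressing the $X$-side curves, the $Y$-side curves still bound disks in $Y$ disjoint from the (now modified) $H$, and that the non-disk-busting condition is genuinely what rules out the surgeries from interacting. The clean way to handle this is to do the surgeries one innermost curve at a time on $\Sigma_\theta$ (choosing at each stage a curve of $\Sigma_\theta \cap H$ that is innermost on $\Sigma_\theta$, hence cuts off a disk or peripheral annulus in $\Sigma_\theta$), compress using a disk in whichever of $X$ or $Y$ is available, and observe that strong irreducibility of $H$ together with the non-disk-busting hypothesis prevents any obstruction: if at some stage the innermost curve were disk-busting on both sides, or bounded disks only on one side that could not be made disjoint from $H$, we would directly contradict either strong irreducibility or our standing assumption. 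I expect the bookkeeping — rather than any deep point — to be where the real work lies, and I would present it in the contrapositive form matching the statement: if every component on each side fails to be disk-busting, the fiber can be isotoped off $H$, a contradiction, so some component must be disk-busting in $X$ or $Y$.
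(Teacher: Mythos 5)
Your proposal has a fundamental gap right at the start, and the machinery you build on top of it does not recover. You assert that each component $c$ of $\Sigma_\theta\cap H$ bounds a disk in $M$, in order to feed it to Scharlemann's no-nesting lemma. But a peripheral curve in $\Sigma_\theta$ is isotopic, through the annulus it cuts off, to $\partial\Sigma_\theta$, which is a \emph{longitude} of $K$; the longitude of a fibered knot with a genus $\ge 2$ page certainly does not bound a disk in $M$ (it bounds the page). Capping with a ``meridian disk of $N(K)$'' does not help: a longitude and a meridian disk do not fit together to make a disk. So Lemma~\ref{nonest} never applies, and the disks you want to surger along need not exist. A second, independent confusion is the phrase ``a non-disk-busting curve bounds a disk in a region not cutting all handles'': not disk-busting means there \emph{exists} an essential disk \emph{disjoint} from the curve, not that the curve itself bounds one. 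Finally, the target of your surgeries is wrong: $\Sigma_\theta$ is incompressible in $X_K$, so it cannot be compressed; the only available moves are isotopies, and even if you could isotope $H$ off $\Sigma_\theta$ by pushing it through $N(K)$, the resulting $H$ is no longer a Heegaard surface of $X_K$, so Claim~\ref{pagesandP} does not apply directly and incompressibility of the (bounded) fiber inside a handlebody of $M$ is not an immediate contradiction. In short, the disk-busting hypothesis is never genuinely engaged in your argument.

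The paper's proof uses the disk-busting hypothesis in a completely different way. It takes the \emph{outermost} peripheral curve $J$ of $\Sigma_\theta\cap H$, which cuts off an annulus $A\subset\Sigma_\theta$ containing all the other intersection curves, and isotopes $K$ across $N(K)$ and then along $A$ onto $J\subset H$. After this isotopy, the truncated page $\Sigma_\theta\setminus A$ lies entirely in one handlebody, say $X$, and $K=J$ is not disk-busting in $X$ (by the contradiction hypothesis). ``Not disk-busting'' is then used for what it actually gives: an essential disk of $X$ disjoint from $K$, i.e.\ a compressing disk for $F=H\setminus N(K)$ in $X$. Maximally compressing $F$ in $X$ yields (via Claim~\ref{G}) an incompressible surface $G$ that can be pushed off $\Sigma_\theta$; the boundary-carrying part $S$ of $G$ is then, by Lemma~\ref{standa}, either a $\partial$-parallel annulus or a union of pages, and each possibility leads to a contradiction (the first via Lemma~\ref{core} and Claim~\ref{pagesandP}, the second via Casson--Gordon). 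So the object being compressed is $H$ (via $F$), not the page, and it is $F$'s compressibility, extracted from the non--disk-busting hypothesis, that drives the argument. You will want to rework your proof around that observation.
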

\begin{proof}
Assume for a contradiction that no curve in  $\Sigma_\theta\cap H$ is disk-busting in $X$ or $Y$. First note that $\Sigma_\theta\cap H$ is non-empty by Claim \ref{pagesandP}. Since all curves in $\Sigma_\theta\cap H$ are peripheral in $\Sigma_{\theta}$, there exists a component $K'$ of $\Sigma_\theta\cap H$ that cuts off an annulus $A$ from $\Sigma_\theta$ that contains all other curves of intersection. Observe that $K'\subset H$ is a fibered knot in $M$ as it is isotopic to $K$ (in $M$, but not necessarily in $U$). Moreover, $\Sigma_\theta' = \overline{\Sigma_\theta \setminus A}$ is a page of $K'$ that completely lies in one of the handlebodies, say $X$ without loss of generality. By assumption, $K'$ is not disk-busting in $X$. It follows that $F=H\setminus \mathring{N}(K')$ is a surface in the exterior of $K'$ that is disjoint from a page $\Sigma_{\theta}'$ and compressible in the handlebody $X$.

Let $G\subset X$ be the surface obtained by maximally compressing $F$ in $X$. By Claim \ref{G}, $G$ is incompressible in $X_{K'}$. Since $\Sigma_{\theta}'$ is incompressible, before compressing $F$ in $X$, we can isotope $\Sigma_{\theta}'$ away from the compressing disks that yield $G$. Therefore, we can assume that $G$ and $\Sigma_{\theta}'$ are disjoint. Now, let $S$ be the union of the components of $G$ which contain $\partial G=\partial F$, so $S\subset X_{K'}$ is an incompressible surface disjoint from the page $\Sigma_{\theta}'$ with two boundary components of the zero slope in $\partial X_{K'}$. By Lemma \ref{standa}, we have the following two possibilities for $S$, and both yield a contradiction.
\begin{enumerate}
	\item $S$ is a $\partial$-parallel annulus in $X_{K'}$: In this case, the fibered $K'\subset H$ is primitive, and hence it is a core, in $X$ by Lemma \ref{core}. Moreover, $\Sigma_{\theta}'$ is a page of $K'$ that is disjoint from $H$, which is impossible by Claim \ref{pagesandP}.
	\item $S$ is isotopic to a union of two pages: In this case, the union of $S$ with the annulus $B=H\cap N(K')$ yields a Heegaard surface $H'$ induced by $K'$. In other words, the strongly irreducible Heegaard surface $H\subset M$ can be compressed in $X$ to the Heegaard surface $H'$, which is impossible by Lemma \ref{compressions}.
\end{enumerate}
This completes the proof.
\end{proof}

\begin{Clm}\label{eliminate} Assume that $(H,X,Y)$ is a strongly irreducible Heegaard splitting of $M$, and $K$ is a core in $X$ (or $Y$). Let $\Sigma_{\theta}$ be a page of $K$ such that no component of $\,\Sigma_\theta\cap H$ bounds an essential disk in $X$ or $Y$.  Then we can isotope $H$ so that every component of $\,\Sigma_{\theta}\cap H$ is non-trivial in both $\Sigma_{\theta}$ and $H$.
\end{Clm}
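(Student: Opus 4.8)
The plan is a two-stage innermost-disk argument: first isotope $H$ so as to remove every component of $\Sigma_\theta\cap H$ that is trivial in $\Sigma_\theta$, and then show that once this has been done no surviving component can be trivial in $H$ either. Throughout I will use the fact that $H$ is disjoint from $K$, since $K$ is a core lying in the interior of the handlebody $X$.

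\emph{Stage 1.} Suppose some component of $\Sigma_\theta\cap H$ bounds a disk in $\Sigma_\theta$, and let $\gamma$ be such a component that is innermost in $\Sigma_\theta$, so that the disk $D\subset\Sigma_\theta$ it bounds meets $H$ only in $\gamma$. Then $\gamma$ bounds the disk $D$ in $M$, so by Scharlemann's no-nesting Lemma~\ref{nonest}, $\gamma$ bounds a disk $E$ in $X$ or in $Y$. By hypothesis $E$ is inessential, hence $\gamma$ is trivial in $H$ and bounds a disk $D'\subset H$. Since $\mathring D$ lies in a single component of $X_K\setminus H$, the embedded sphere $D\cup D'$ sits in the closure of one of the two handlebodies cut off by $H$ in $M$, and therefore bounds a ball $B$ there; moreover $B$ is disjoint from $K$, because $\partial B\subset\Sigma_\theta\cup H$ is disjoint from $K$ while a core of a positive-genus handlebody cannot lie in a ball. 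I would then isotope $H$ across $B$, pushing $D'$ off $\Sigma_\theta$; arranging this carefully makes $|\Sigma_\theta\cap H|$ strictly decrease while introducing no new intersection curve, and it leaves $(H,X,Y)$ a strongly irreducible splitting in which $K$ is still a core and under which the hypothesis persists (the surviving intersection curves are literally unmoved). Iterating finitely often, I may assume that no component of $\Sigma_\theta\cap H$ is trivial in $\Sigma_\theta$.

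\emph{Stage 2.} Now suppose some component $\gamma$ of $\Sigma_\theta\cap H$ is trivial in $H$, and take $\gamma$ innermost in $H$, bounding a disk $D'\subset H$ with $D'\cap\Sigma_\theta=\gamma$. Since $H$ is disjoint from $K$, we have $D'\subset X_K$, so $D'$ is a disk meeting the fiber $\Sigma_\theta$ only along $\gamma=\partial D'$. If $\gamma$ is essential in $\Sigma_\theta$, then $D'$ is a compressing disk for $\Sigma_\theta$ in $X_K$, contradicting incompressibility of the fiber; if $\gamma$ is inessential in $\Sigma_\theta$, this contradicts Stage~1. Hence no component of $\Sigma_\theta\cap H$ is trivial in $H$, and together with Stage~1 this gives the claim.

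The step I expect to be the main obstacle is the bookkeeping in Stage~1: one has to verify that the disk-swapping isotopy across $B$ can genuinely be taken to strictly reduce $|\Sigma_\theta\cap H|$ while creating no new intersection curves and preserving the standing hypotheses — strong irreducibility of $H$, $K$ a core, and the absence of essential disks bounded by components of $\Sigma_\theta\cap H$ — so that the induction actually closes. The remaining content is a routine combination of no-nesting with the hypothesis and of incompressibility of fibers.
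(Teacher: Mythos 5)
Your proof is correct and follows the same strategy as the paper's: Scharlemann's no-nesting lemma combined with the hypothesis shows that any component of $\Sigma_\theta\cap H$ that is trivial in $\Sigma_\theta$ must also be trivial in $H$, incompressibility of the fiber (this is exactly Lemma~\ref{trivial}) gives the converse, and an innermost-disk swap then eliminates the trivial curves. The paper states the biconditional and invokes the standard innermost-curve argument in a single sentence, while you unpack the elimination into two explicit stages and spell out the bookkeeping (including that the ball is disjoint from $K$ because a core represents a nontrivial element of $\pi_1$ of its handlebody, or, equivalently, because the sphere lives in the irreducible manifold $X_K$), but the substance is identical.
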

\begin{proof}
First note that any curve $\gamma\subset\Sigma_\theta\cap H$ that is trivial in $\Sigma_\theta$ is also trivial in $H$. Otherwise, by Lemma \ref{nonest}, $\gamma$ bounds an essential disk $D$ in $X$ or $Y$, which contradicts the assumption. On the other hand, any curve $\gamma\subset\Sigma_\theta\cap H$ that is trivial in $H$ is also trivial in $\Sigma_\theta$ by Lemma \ref{trivial} (basically because $\Sigma_\theta$ is incompressible). Therefore, we can isotope $H$ to eliminate trivial curves from the intersection by applying the standard ``innermost intersection curve'' argument.
\end{proof}

Now we are ready to prove the main lemma of this subsection.
\v

\noindent \textit{Proof of Lemma~\ref{bothUV}.} Let $P_t$ be labelled with both $U$ and $V$, i.e., there exist pages $\Sigma_U$ and $\Sigma_V$ such that 
\begin{itemize}
	\item[(a)] every curve in $\Sigma_U\cap P_t$ is inessential in $\Sigma_U$ and not disk-busting in $U_t$;
	\item[(b)] every curve in $\Sigma_V \cap P_t$ is inessential in  $\Sigma_V$ and not disk-busting in $V_t$. 
\end{itemize}\vspace{2mm}

\noindent\textbf{Claim.} Both $\Sigma_U\cap P_t$ and $\Sigma_V\cap P_t$ have no component that bounds an essential disk in $U_t$ or $V_t$.  \vspace{2mm}

\noindent\textit{Proof of the claim.} Assume for a contradiction that $\Sigma_U\cap P_t$ has a component $\gamma_U$ that bounds an essential disk in $U_t$ or $V_t$. By labelling, $\gamma_U$ is not disk-busting in $U_t$. Since $P_t$ is strongly irreducible, we deduce that $\gamma_U$ cannot bound a disk in $V_t$. So, $\gamma_U$ bounds an essential disk $D_U\subset U_t$. Now we have two cases depending on $\Sigma_V\cap P_t$, and both yield a contradiction.\vspace{2mm}

\noindent \textbf{Case 1.} $\Sigma_V\cap P_t$ has a component $\gamma_V$ that bounds an essential disk in $U_t$ or $V_t$: In this case, since $\gamma_V$ is not disk-busting in $V_t$ and $P_t$ is strongly irreducible, we deduce that the curve $\gamma_V$ bounds an essential disk $D_V$ in $V_t$. Since $\Sigma_{U}$ and $\Sigma_{V}$ are pages (possibly equal pages with no self-intersection) that do not intersect each other, we deduce that the essential disks $D_U\subset U_t$ and $D_V\subset V_t$ do not intersect, which contradicts the strong irreducibility of $P_t$.\vspace{2mm}

\noindent \textbf{Case 2.} $\Sigma_V\cap P_t$ has no component that bounds an essential disk in $U_t$ or $V_t$: In this case, by Claim \ref{eliminate}, we can isotope $P_t$ so that $\Sigma_V\cap P_t$ contains no trivial curves. By the labelling, $\Sigma_V \cap  P_t$ contains no essential curves in $\Sigma_V$, and so, after the isotopy, $\Sigma_V\cap P_t$ is a collection of peripheral curves that are not disk-busting in $V_t$. On the other hand, since $\gamma_U\subset \Sigma_U\cap P_t$ bounds an essential disk that is disjoint from $\Sigma_V\cap P_t$, we deduce that $\Sigma_V\cap P_t$ is not disk-busting in $U_t$ either. However, this is impossible by Claim \ref{bicompressible}.\qed\vspace{2mm}

It follows from Claim \ref{eliminate} that we can isotope $P_t$ to eliminate all simple closed curves of $\Sigma_U\cap P_t$ and $\Sigma_V\cap P_t$ that are trivial in $\Sigma_U$ and $\Sigma_V$, respectively. After the isotopy, $\Sigma_U\cap P_t$ (resp. $\Sigma_V\cap P_t$) is a collection of peripheral cuves in $\Sigma_U$ (resp. in $\Sigma_V$). Since $\Sigma_U\cap P_t$  is not disk-busting in $U_t$, by Claim \ref{bicompressible}, we deduce that it has a component $\gamma_V$ that is disk-busting in $V_t$. Similarly, $\Sigma_V\cap P_t$ has a component $\gamma_U$ that is disk-busting in $U_t$.

Now we will show that $P_t$ is isotopic to the Heegaard surface induced by $K$. Let $N$ be the complement of an open tubular neighborhood $\mathring{N}(\Sigma_{U}\cup \Sigma_{V})$ in $X_K$ and $F=P_t\cap N$. Notice that each component of $N$ is homeomorphic to $\Sigma\times I$.

First we prove that $F$ is incompressible in $N$. Assume for a contradiction that $F$ is compressible with a compressing disk $D$. Then $\alpha = \partial D$ can be regarded as an essential curve in $P_t$ that bounds a disk in $M$. By Lemma \ref{nonest}, $\alpha$ bounds an essential disk in $U_t$ or $V_t$, which is impossible because $\gamma_U$ and $\gamma_V$ are disk-busting in $U$ and $V$, respectively.

Note that each component of $\partial F$ is peripehral in the horizontal boundary components of $N$. Therefore, $F$ can be isotoped in $N$ so that $\partial F$ lies in the vertical boundary components $\partial\Sigma\times I$. It follows from Lemma \ref{standa} that $F=P_t\cap N$ is isotopic to a union of pages and $\partial$-parallel annuli in $N$. We deduce that $P_t$ is isotopic in $M$ to a union of a collection pages and annuli. Since $P_t$ is not a torus, it contains a subsurface that is homeomorphic to a page. Since the only closed connected surface that can be constructued as a union of pages and annuli is the Heegaard surface induced by $K$, it follows that $P_t$ is isotopic to the Heegaard surface induced by $K$.
 \qed



\subsection{A special level} In the previous subsection, we showed that if there exists a level surface $P_t$ that receives both labels $U$ and $V$, then $P$ is induced by the fibered knot $K$, which is one of the possible conslusions in Thorem \ref{KinP}. In this subsection, we will show that if $P$ is not induced by $K$, then there exists a level $P_s$ that does not receive a label and this surface will provide the complexity bound stated in Theorem \ref{KinP}.

\begin{Clm}\label{standar}For $\delta>0$ sufficiently small, $P_{\delta}$ is labelled with $U$, and $P_{1-\delta}$ is labelled with $V$.
\end{Clm}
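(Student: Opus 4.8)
\emph{Proof sketch / plan.} The plan is to mimic the proof of Claim~\ref{standard}, using that the pages have been put in standard position with respect to $P_t$ near the spines $G_{U'}$ and $G_V$. The only new feature, compared with the arc-labelling of Section~4, is that the labelling of Subsection~6.2 additionally demands that the relevant intersection curves be \emph{not disk-busting}; verifying this is where the hypothesis $g\geq 2$ will be used.

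For the assertion about $P_{1-\delta}$ I would argue as follows. As $t\to 1$ the level surface $P_t$ collapses onto $G_V$ and $V_t$ becomes a regular neighborhood of $G_V$. By the standard position of the pages near $G_V$, fix a page $\Sigma_\theta$ meeting $G_V$ transversely in finitely many interior edge points; then there is a $\delta_0>0$ such that for every $\delta\in(0,\delta_0)$ the page $\Sigma_\theta$ is transverse to $P_{1-\delta}$ and $\Sigma_\theta\cap V_{1-\delta}$ is a disjoint union of small disks, one near each point of $\Sigma_\theta\cap G_V$ and transverse to the edge of $G_V$ through it. Hence every component of $\Sigma_\theta\cap P_{1-\delta}$ bounds a disk in $\Sigma_\theta$, so it is inessential in $\Sigma_\theta$. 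For such a curve $c$, cutting $V_{1-\delta}$ along the disk $D\subset\Sigma_\theta\cap V_{1-\delta}$ with $\partial D=c$ cuts the corresponding edge of the spine and leaves a handlebody, or a union of two handlebodies, of total genus $g-1\geq 1$; an essential compressing disk of this cut-open handlebody is an essential disk of $V_{1-\delta}$ disjoint from $c$, so $c$ is not disk-busting in $V_{1-\delta}$. Therefore $\Sigma_\theta$ witnesses that $P_{1-\delta}$ is labelled with $V$.

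The argument for $P_\delta$ is the same, with one extra curve to keep track of. As $t\to 0$, $U'_t$ becomes a regular neighborhood of $G_{U'}$, which is a wedge of $\partial X_K$ and a spine $\Gamma$ of a genus $g-1$ handlebody $W'$; writing $\widetilde N(K)$ for the solid-torus neighborhood $N(K)\cup(\partial X_K\times[0,\delta])$ of $K$, the handlebody $U_\delta=U'_\delta\cup N(K)$ is then the boundary connected sum $\widetilde N(K)\,\natural\,W'$. I would fix a page $\Sigma_\theta$ in standard collar position along $\partial X_K$ and transverse to $\Gamma$ and to the arc of the wedge; for $\delta$ small, $\Sigma_\theta\cap P_\delta$ then consists of one curve $c_0$ parallel to $\partial\Sigma_\theta$ (hence peripheral in $\Sigma_\theta$), together with small circles dual to the edges of $\Gamma$ and the wedge arc, each bounding a disk in $\Sigma_\theta$; all of these are inessential in $\Sigma_\theta$. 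Here $c_0$ lies on $\partial\widetilde N(K)$, so it is disjoint from any essential disk of the summand $W'$, which exists since $g-1\geq 1$, and hence $c_0$ is not disk-busting in $U_\delta$; and each of the dual circles is not disk-busting by the same edge-cutting count as before, again using $g-1\geq 1$. Thus $\Sigma_\theta$ witnesses that $P_\delta$ is labelled with $U$.

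The step that will require the most care is the non-disk-busting verification: one must check both that the dual disk can be found away from the intersection curve in question, and that the compressing disk produced inside the cut-open handlebody is still essential in the ambient handlebody, i.e. that its boundary does not bound a disk in $P_{1-\delta}$ (resp. $P_\delta$). Both follow from the fact that the cut-open handlebody still has a component of positive genus, together with the strong irreducibility of $P$ and Lemma~\ref{nonest}. This is precisely where $g\geq 2$ is essential: for $g=1$ the surface $V_t$ would be a solid torus carrying a single, disk-busting, dual curve, and the claim would fail.
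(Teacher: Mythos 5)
Your argument is correct and matches the paper's approach: both rely on the standard position of the pages near the spines $G_{U'}$ and $G_V$, identify the intersection curves as trivial circles dual to spine edges (plus, on the $U$ side, one curve peripheral in $\Sigma_\theta$), and verify the non-disk-busting condition via the genus-$(g-1)$ complementary part of the spine. The only difference is cosmetic: the paper dispatches the cases by noting that trivial curves bound disks in $U_t$ or $V_t$ and that peripheral curves are primitive in $U_t$ (each of which is immediately non-disk-busting when $g\ge 2$), whereas you construct disjoint essential disks explicitly by cutting along spine edges; in particular, the appeal to strong irreducibility and Lemma~\ref{nonest} in your final paragraph is not needed, since the fact that a compressing disk of a boundary-connect-summand of a handlebody remains essential in the ambient handlebody is elementary and independent of any irreducibility hypothesis.
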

\begin{proof} This is basically because $\Sigma_\theta$ and $P_t$ have standard intersection near the spines. 

For $t$ values near $0$, every curve $\gamma \subset\Sigma_\theta\cap P_t$ is inessential in $\Sigma_\theta$. If $\gamma$ is trivial in $\Sigma_\theta$, then it bounds a disk in $U_t$. If $\gamma$ is peripheral in $\Sigma_\theta$, then it is primitive in $U_t$. In both cases, $\gamma$ is not disk-busting. So, $P_t$ is labelled with $U$.

For $t$ values near $1$, every curve $\gamma \subset\Sigma_\theta\cap P_t$ is inessential in $\Sigma_\theta$ and bounds a disk in $V_t$. So, $P_t$ is labelled with $V$.
\end{proof}

\begin{Lem}
If $P$ is not induced by $K$, there exists a level surface $P_s$ that is not labelled.
\end{Lem}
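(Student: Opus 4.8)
The plan is to run the Gordon--Luecke-style supremum argument that structures the proof of Lemma~\ref{usefullevel} in Section~4, now with the labels $U$ and $V$ playing the roles of $L$ and $H$. Put
$$s:=\sup\{t\in(0,1)\,|\,P_t\text{ is labelled with }U\}.$$
By Claim~\ref{standar} there is a $\delta_0>0$ with $P_t$ labelled $U$ for all $t\in(0,\delta_0]$ and labelled $V$ for all $t\in[1-\delta_0,1)$, so the set above is nonempty and $s\ge\delta_0>0$. If we had $s>1-\delta_0$, then some $P_t$ with $t\in(1-\delta_0,1)$ would be labelled $U$, hence also $V$ by Claim~\ref{standar}; a level carrying both labels forces $P$ to be induced by $K$ by Lemma~\ref{bothUV}, contrary to hypothesis. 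Thus $0<s<1$ and $P_s$ is a genuine sweepout level.

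The key point I would establish is that each label is an \emph{open} condition on the parameter $t$: if $P_s$ is labelled $U$ (resp.\ $V$), then so is $P_t$ for every $t$ in a neighborhood of $s$. The witnessing page $\Sigma_\theta$ is, by definition of the labelling, transverse to $P_s$, so $(\theta,s)$ lies in the interior of a region $R$ of $A\setminus\Lambda$ (notation as in Subsection~\ref{Cerf}); for $t$ near $s$ the point $(\theta,t)$ remains in $R$, so $\Sigma_\theta\cap P_t$ consists of curves of the same isotopy types in $\Sigma_\theta$ (still inessential there), and because within a single region of $A\setminus\Lambda$ there is an ambient isotopy of $M$ that fixes $\Sigma_\theta$ and carries the pair $(P_t,U_t)$ to $(P_{t'},U_{t'})$, the clause ``not disk-busting in $U_t$'' (resp.\ ``$V_t$'') is also preserved. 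This is the analogue in the present setting of the Observation in Subsection~\ref{labelling}: the label of a region is carried to every level meeting that region.

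Granting this, I would conclude as in Claims~\ref{nolabels}--\ref{lbelow}. If $P_s$ were labelled $U$, openness would give a level $P_t$ with $t>s$ still labelled $U$, contradicting that $s$ is an upper bound; hence $P_s$ is not labelled $U$, and in particular the supremum is not attained, so there are $U$-labelled levels $P_{t_n}$ with $t_n<s$ and $t_n\to s$. If $P_s$ were labelled $V$, openness would give $\epsilon>0$ with $P_t$ labelled $V$ for all $t\in(s-\epsilon,s)$; taking $t_n\in(s-\epsilon,s)$ then produces a level carrying both labels, whence $P$ is induced by $K$ by Lemma~\ref{bothUV}, again a contradiction. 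Therefore $P_s$ receives no label. The step I expect to require the most care is the openness/region-invariance of the ``not disk-busting'' clause: unlike the low/high disk conditions of Section~4, it refers to the entire compression body $U_t$, which varies with $t$, so one must make precise the ambient isotopy within a region of $A\setminus\Lambda$ that simultaneously fixes $\Sigma_\theta$ and moves $(P_t,U_t)$ to $(P_{t'},U_{t'})$; everything else is a formal rerun of the supremum argument already used for the label $L$.
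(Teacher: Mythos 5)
Your proof is correct and follows essentially the same route as the paper: define $s$ as the supremum of the $U$-labelled parameter values, use the analogue of Claim~\ref{standard} (here Claim~\ref{standar}) together with Lemma~\ref{bothUV} to show $0<s<1$, and then use the openness of each label (the witnessing page lies in a region of $A\setminus\Lambda$) to rule out $P_s$ carrying either label, invoking Lemma~\ref{bothUV} once more when excluding the $V$ label. The paper cites the "arguments in Section~4" for the openness step where you spell out the region-invariance of the disk-busting clause, but the structure and the load-bearing lemmas are identical.
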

\begin{proof}
Let $s:=\sup\{t\in(0,1)| P_t \text{ is labelled with $U$}\}$. The lemma follows from the following observations, which are similar to some arguments we introduced in Section \ref{ThinPosition}:
\begin{enumerate}
\item $0<s$: This is because $P_\delta$ is labelled with $U$ for $\delta>0$ sufficiently small.
\item $s<1$: If $s=1$, then there are $t$ values arbitrarily close to $1$ such that $P_t$ receives both labels. Hence, by Lemma \ref{bothUV}, $P$ is induced by $K$ up to isotopy.
\item $P_s$ is not labelled with $U$: If $P_s$ is labelled with $U$, then for small $\delta>0$, $P_{s+\delta}$ is labelled $U$, which contradicts the definition of $s$.
\item For any $\epsilon>0$, there exists a $t\in (s-\epsilon,s)$ such that $P_t$ is labelled with $U$: If this does not hold, $s$ cannot be the supremum of the parameters of $U$-labelled levels.
\item $P_s$ is not labelled with $V$: If $P_s$ is labelled with $V$, then for small $\delta>0$, $P_{s-\delta}$ is labelled $V$. Hence, there are $t$ values arbitrarily close to $s$ which are labelled with both $U$ and $V$. Therefore, by Lemma \ref{bothUV}, $P$ is induced by $K$ up to isotopy.
\end{enumerate}
Thus, $P_s$ is an unlabelled level as stated in observations (3) and (5).
\end{proof}
Now let us fix a level surface $P_s$ that is unlabelled. Unlike Section \ref{ThinPosition}, we do not necessarily specify $s$ to be $\sup\{t\in(0,1)\,|\, P_t \text{ is labelled with $U$}\}$ .

\begin{Lem}\label{finally}
If $P$ is not induced by $K$, then for any angle $\theta$ such that $\Sigma_\theta\cap P_s$ is transversal, there exists a component of $\Sigma_\theta\cap P_s$ that is essential in $\Sigma_{\theta}$.
\end{Lem}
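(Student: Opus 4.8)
The plan is to argue by contradiction: suppose $\Sigma_\theta$ is transverse to $P_s$ but every component of $\Sigma_\theta\cap P_s$ is inessential in $\Sigma_\theta$, and derive a contradiction with the hypothesis that $P$ is not induced by $K$ (using also that $P_s$ carries no label).

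The first, and hardest, step is to reduce to the case that every component of $\Sigma_\theta\cap P_s$ is peripheral in $\Sigma_\theta$ and essential in $P_s$. For this I would first rule out that any component $\gamma$ of $\Sigma_\theta\cap P_s$ bounds an essential disk in $U_s$ or $V_s$: such a $\gamma$ is inessential in $\Sigma_\theta$, so it cuts off a disk or an annulus from $\Sigma_\theta$, and splicing this piece to the essential disk — in innermost position, using Lemma~\ref{nonest} — yields either a curve bounding essential disks on both sides of $P_s$ (contradicting strong irreducibility of $P_s$, which also forces $M$ to be irreducible), or the curve $\partial\Sigma_\theta$ bounding a disk in $X_K$ (impossible, since $\partial\Sigma_\theta$ is essential in the fiber, hence nontrivial in $\pi_1(X_K)$), or a compression of $P$ to the Heegaard surface induced by $K$ (impossible by Theorem~2.1 in~\cite{CasGo} together with the hypothesis). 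Having excluded essential disks, Claim~\ref{eliminate} lets me isotope $P_s$ so that no component of $\Sigma_\theta\cap P_s$ is trivial in $\Sigma_\theta$ or in $P_s$; since the components are still inessential in $\Sigma_\theta$, they are now all peripheral in $\Sigma_\theta$ and essential in $P_s$.

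Next I would exploit that $P_s$ is unlabelled. Because $P_s$ is not labelled with $U$ and every component of $\Sigma_\theta\cap P_s$ is inessential in $\Sigma_\theta$, some component $\gamma$ must fail the no-disk-busting condition, i.e. $\gamma$ is disk-busting in $U_s$; likewise some component $\gamma'$ is disk-busting in $V_s$. Now set $N=X_K\setminus\mathring{N}(\Sigma_\theta)\cong\Sigma\times I$ and $F=P_s\cap N$, exactly as in the proof of Lemma~\ref{bothUV}. The surface $F$ is incompressible in $N$: a compressing disk would have boundary essential in $F$, hence essential in $P_s$ and disjoint from $\Sigma_\theta\cap P_s$ (in particular from $\gamma,\gamma'$); it bounds a disk in $M$, so by Lemma~\ref{nonest} an essential disk in $U_s$ or $V_s$, which would have to meet $\gamma$ or $\gamma'$ respectively — a contradiction. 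Since $\partial F$ consists of peripheral circles in the two horizontal faces of $N$ and $P_s$ is disjoint from the vertical boundary $\partial\Sigma\times I=\partial X_K$, the classification of incompressible surfaces in $\Sigma\times I$ (as used for Lemma~\ref{standa}) shows each component of $F$ is a vertical annulus, a $\partial$-parallel annulus, or a $\partial$-parallel copy of $\Sigma_\theta$ minus a peripheral annulus, with at most one of the last type on each face. If none of the last type occurs, then $P_s=F\cup(P_s\cap N(\Sigma_\theta))$ is a union of annuli, forcing $g(P_s)=1$, against $g\ge 2$. Otherwise, an Euler-characteristic parity count forces exactly one such piece on each face, and reassembling the pieces shows $P_s$ is isotopic to the Heegaard surface induced by $K$ — against the hypothesis. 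Either way we reach a contradiction, which proves the lemma.

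The main obstacle is the first reduction: carefully tracking innermost-disk positions and which side of $P_s$ each piece of the page falls on, and pinning down in each configuration exactly which of the three impossibilities — strong irreducibility, incompressibility of the fiber, or Theorem~2.1 in~\cite{CasGo} — is violated. Once the page meets $P_s$ only in peripheral, $P_s$-essential curves with disk-busting curves present on both sides, the remaining product-region argument is routine and parallels the end of the proof of Lemma~\ref{bothUV}.
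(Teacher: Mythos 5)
Your overall plan — contrapositive, reduce to peripheral curves, then run the $\Sigma\times I$ classification argument as at the end of Lemma~\ref{bothUV} — matches the paper's, and your final step (incompressibility of $F=P_s\cap N$ via Lemma~\ref{nonest} plus the disk-busting curves, then the product-region classification) is essentially the paper's argument. However, you have inverted the order of the two middle steps, and this inversion is what makes your ``hardest step'' hard and leaves it with a genuine gap.

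The paper uses the unlabelled hypothesis \emph{first}: since every component of $\Sigma_\theta\cap P_s$ is inessential in $\Sigma_\theta$ and $P_s$ is not labelled with $U$ (resp.~$V$), the definition of the labelling forces some component $\gamma_U$ to be disk-busting in $U_s$ and some $\gamma_V$ to be disk-busting in $V_s$. Once you have $\gamma_U$ and $\gamma_V$, the fact that no component of $\Sigma_\theta\cap P_s$ bounds an essential disk in $U_s$ or $V_s$ is immediate: any such disk would have boundary disjoint from $\gamma_U$ (or $\gamma_V$), contradicting disk-busting. Only \emph{then} does the paper apply Claim~\ref{eliminate}, and it explicitly tracks that $\gamma_U,\gamma_V$ survive the isotopy (they are peripheral, not trivial, in $\Sigma_\theta$ precisely because they are essential in $P_s$).

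Your version front-loads the ``no component bounds an essential disk'' step and tries to prove it by splicing a compressing disk to the disk or annulus that $\gamma$ cuts off in $\Sigma_\theta$. This is not fully closed. In the case $\gamma$ trivial in $\Sigma_\theta$ bounding $\Delta\subset\Sigma_\theta$ with $\Delta$ innermost, $\Delta$ may lie on the \emph{same} side of $P_s$ as the essential disk $D$, in which case you get neither a reducing sphere nor a compression of $P$ toward the induced splitting nor a disk bounded by $\partial\Sigma_\theta$; you simply get two disks in $U_s$ with the same boundary, which is not contradictory. The peripheral case is worse: the annulus $A\subset\Sigma_\theta$ has a boundary component on $\partial X_K$, which sits inside the solid torus $N(K)\subset U$, so $A$ does not give a clean compression-body argument without substantially more care. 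You acknowledge this is the ``main obstacle,'' which is a signal that the step should be avoided rather than forced. Moreover, by applying Claim~\ref{eliminate} before invoking the unlabelled hypothesis, you would need to re-verify that the isotoped $P_s$ is still unlabelled (the labelling is tied to a fixed sweepout and Cerf graphic), whereas the paper avoids this by extracting $\gamma_U,\gamma_V$ before isotoping and carrying them along.

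So: keep your endgame, but reorder the middle. Extract the disk-busting curves from the unlabelled hypothesis first; the essential-disk exclusion then falls out in one line, Claim~\ref{eliminate} applies, and your $\Sigma\times I$ argument finishes the job.
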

\begin{proof} We prove the contrapositive. Assume that there exists a page $\Sigma_{\theta}$ such that $\Sigma_{\theta}\cap P_s$ is transversal and inessential in $\Sigma_{\theta}$. Since $P_s$ is not labelled, it follows that there exist components $\gamma_U$ and $\gamma_V$ in $\Sigma_{\theta}\cap P_s$ such that $\gamma_U$ is disk-busting in $U_s$ and $\gamma_V$ is disk busting in $V_s$. This implies that no component of $\Sigma_{\theta}\cap P_s$ bounds an essential disk in $U_s$ or $V_s$. Thus, by Lemma \ref{eliminate}, we can isotope $P_s$ to eliminate trivial curves of intersection so that $\Sigma_\theta\cap P_s$ consists of curves that are peripheral in $\Sigma_{\theta}$. After the isotopy, $\Sigma_\theta\cap P_s$ still contains curves $\gamma_U$ and $\gamma_V$ that are disk-busting in $U_s$ and $V_s$, respectively. Similar to the argument at the end of the proof of Lemma \ref{bothUV}, this implies that $P_s$, and therefore $P$, is induced by $K$. We shortly explain it.
	
Let $N\cong \Sigma\times I$ be the complement of an open tubular neighborhood $\mathring{N}(\Sigma_{\theta})$ in $X_K$ and $F=P_s\cap N$. Similarly, $F$ is incompressible in $N$ and it is isotopic to a union of pages and $\partial$-parallel annuli in $N$ (by Lemma \ref{standa}). We deduce that $P_s$ is isotopic in $M$ to a union of a collection pages and annuli. Since $P_s$ is not a torus, it contains a subsurface that is homeomorphic to a page. Since the only closed connected surface that can be constructued as a union of pages and annuli is the Heegaard surface induced by $K$, it follows that $P_s$ is isotopic to the Heegaard surface induced by $K$.
\end{proof}
The discussion so far points out that if Conclusion (1) of Theorem \ref{KinP} does not hold, then there exists a level surface $P_s$ that is unlabelled and the intersection of this surface with any transverse page $\Sigma_{\theta}$ contains a simple closed curve that is essential in that page. Before the proof of Theorem \ref{KinP} for primitive knots, we will state and prove two more lemmas, which will be helpful to prove that such a surface imposes a complexity bound.
\begin{Lem}\label{bo}
Assume that $P$ is not induced by $K$. If $P_s$ is an unlabelled level surface such that there is no vertex of $\Lambda$ on the horizontal circle $C_s$, then $d_\C(\phi)\le 2g-2$.
\end{Lem}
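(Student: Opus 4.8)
The plan is to recognize $P_s$, regarded as a surface inside the fibered knot exterior $X_K$, as a closed perfect regular surface of genus $g$, and then to quote Lemma~\ref{closedbound}. To that end I would first check that $P_s$ is a regular closed surface with respect to the pages of $K$. Since $K$ is a core in $U$, each level $P_t$ is a closed genus $g$ surface embedded in the interior of $X_K$, bounding the compression body $U'_t$ (which contains $\partial X_K$) on one side and the handlebody $V_t$ on the other; in particular it is properly embedded in $X_K$ with empty boundary. The hypothesis that $C_s$ carries no vertex of $\Lambda$ means that $C_s$ meets the edges of $\Lambda$ transversely in finitely many points, and by property~(3) of the Cerf position each such point records exactly one center or saddle tangency of $P_s$ to the corresponding page. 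Hence $P_s$ is transverse to $\Sigma_\theta$ for all but finitely many $\theta$, with a single center or saddle tangency at each exceptional page, i.e.\ $P_s$ is regular in the sense of Section~3.

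Next, because $P$ is not induced by $K$ and $P_s$ is unlabelled, Lemma~\ref{finally} applies: every page $\Sigma_\theta$ transverse to $P_s$ meets $P_s$ in a collection of simple closed curves at least one of which is essential in $\Sigma_\theta$. This is exactly the condition for $P_s$ to be a closed perfect surface in $X_K$ in the sense of Definition~\ref{perfect}. Finally, $P_s$ is isotopic to $P$ in $X_K$, so $g(P_s) = g \ge 2$, and Lemma~\ref{closedbound} yields $d_\C(\phi) \le -\chi(P_s) = 2g-2$, as desired.

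I do not anticipate a serious obstacle: the substantive content is already packaged in Lemma~\ref{finally} (perfectness of unlabelled levels) and in Lemmas~\ref{saddles} and~\ref{closedbound} (the counting argument that converts perfectness into a curve-complex bound). The only point requiring a line of justification is that the absence of a vertex on $C_s$ genuinely puts $P_s$ in regular position, and this is immediate from the defining properties of the intersection graphic $\Lambda$.
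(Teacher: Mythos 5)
Your proposal is correct and follows exactly the paper's own route: absence of a vertex on $C_s$ gives regularity, Lemma~\ref{finally} gives perfectness (since $P_s$ is unlabelled and $P$ is not induced by $K$), and Lemma~\ref{closedbound} converts this into the bound $d_\C(\phi)\le -\chi(P_s)=2g-2$. Your elaboration of the regularity step (reading off properties of the Cerf position) is a fair expansion of what the paper states more tersely.
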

\begin{proof}
If $C_s$ contains no vertex, then $P_s$ is a regular surface. Moreover, since $P$ is not induced by $K$, Lemma \ref{finally} implies that for any angle $\theta$, if $\Sigma_{\theta}\cap P_s$ is transversal, then it contains a curve that is essential in $\Sigma_{\theta}$. In other words, $P_s$ is a perfect surface in $X_K$ (see Definition \ref{perfect}). Hence, by Lemma \ref{closedbound}, $d_\C(\phi)\le -\chi(P_s)=2g-2$.
\end{proof}

\begin{Lem}\label{cr}
Assume that $P$ is not induced by $K$. If $P_s$ is an unlabelled level surface such that there is a birth-and-death vertex of $\Lambda$ on $C_s$, then $d_\C(\phi)\le 2g-2$.
\end{Lem}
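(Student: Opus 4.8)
The plan is to imitate the case analysis in the proof of Claim~\ref{crossing}, the thin-position analogue of this statement, adapting it to the present set-up where $P_s$ is only assumed to be unlabelled (rather than being a supremum of $U$-labelled parameters). The goal in every case is to produce a nearby level surface $P_{s'}$ that is again unlabelled and whose horizontal circle $C_{s'}$ contains no vertex of $\Lambda$, so that Lemma~\ref{bo} applies directly and gives $d_\C(\phi)\le 2g-2$.

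First I would fix the birth-and-death vertex $(\psi,s)$ lying on $C_s$ together with its two adjacent edges $e_1,e_2$ of $\Lambda$; by the Cerf-position properties this is the only vertex on $C_s$, and away from the vertices the edges of $\Lambda$ are transverse to every horizontal circle. There are three cases according to the position of $e_1$ and $e_2$ relative to $C_s$: (A) one of them lies above $C_s$ and the other below it; (B) both lie above $C_s$; (C) both lie below $C_s$. Using the local cusp model of $\Lambda$ near a birth-and-death vertex, in cases (A) and (B) there is an $\epsilon>0$ such that every region met by a horizontal circle $C_t$ with $t\in(s-\epsilon,s)$ is already met by $C_s$, while in case (C) the same holds for $t\in(s,s+\epsilon)$. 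Since $P_s$ is unlabelled, no region meeting $C_s$ can carry a label (otherwise $P_s$ would inherit it by the properties of the Cerf position), hence $P_t$ is unlabelled for every $t$ in the relevant one-sided interval $J\subset(0,1)$.

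To finish, I would use that $\Lambda$ has only finitely many vertices to choose a parameter $s'\in J$ whose horizontal circle $C_{s'}$ is disjoint from the vertex set of $\Lambda$. Then $P_{s'}$ is an unlabelled level surface meeting no vertex of $\Lambda$, and — still under the standing hypothesis that $P$ is not induced by $K$ — Lemma~\ref{bo} yields $d_\C(\phi)\le -\chi(P_{s'})=2g-2$, as desired.

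The step needing the most care is the local analysis near the birth-and-death vertex that justifies, on the appropriate side, that every region met by $C_t$ is also met by $C_s$: one must check, case by case, how the two branches of the cusp sit relative to the horizontal direction, exactly as in the treatment of Claim~\ref{crossing}. Since this local picture is identical to the one already handled there and is governed only by the transversality and ``one vertex per horizontal circle'' properties of the intersection graphic, I would simply invoke that analysis rather than reproduce it, observing only that each case here terminates with an application of Lemma~\ref{bo} in place of the contradiction obtained in the thin-position setting.
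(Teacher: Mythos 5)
Your proposal is correct and follows essentially the same route as the paper's proof: exhibit a nearby parameter on one side of $s$ for which the horizontal circle only meets regions already met by $C_s$ (hence unlabelled regions), perturb to avoid vertices, and invoke Lemma~\ref{bo}. The paper's proof is terser, stating only that ``we find a sufficiently small $\epsilon>0$ such that $P_{s-\epsilon}$ (or $P_{s+\epsilon}$) is unlabelled, and $C_{s-\epsilon}$ contains no vertex''; your explicit three-case cusp analysis, imported from Claim~\ref{crossing}, simply fills in the details the paper leaves implicit, and all three cases are verified correctly.
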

\begin{proof}
If there exists a birth-and-death vertex of $\Lambda$ on $C_s$, then
we find a sufficiently small $\epsilon>0$ such that $P_{s-\epsilon}$ (or $P_{s+\epsilon}$) is unlabelled, and $C_{s-\epsilon}$ contains no vertex. 
Hence, $P_{s-\epsilon}$ satisfies the hypothesis of the previous lemma, and the complexity bound follows.
\end{proof}

\subsection{Proof of Theorem~\ref{KinP} for primitive knots.} Assume that $(P,U,V)$ and $K$ are as in the statement of Theorem \ref{KinP} and $K$ is a core in either $U$ or $V$ up to isotopy. We will assume that Condition (1) does not hold and show that Condition (2) holds. So, assume $P$ is not isotopic to the Heegaard surface induced by $K$. It follows from Lemma \ref{finally} that there is an unlabelled level $P_s$ such that for any page $\Sigma_\theta$ that is transversal to $P_s$, there exists a curve $\alpha\subset \Sigma_\theta\cap P_s$ that is essential in $\Sigma_{\theta}$. Moreover, by Lemmas \ref{bo} and \ref{cr}, we can assume that the horizontal circle $C_s$ contains a crossing vertex $(\psi,s)$ for otherwise we obtain $d_\C(\phi)\le 2g-2$, i.e., (2) holds. Under these assumptions, the following facts follow from the arguments of Section \ref{ThinPosition}:
\begin{enumerate}
	\item $\Sigma_{\psi}$ and $P_s$ intersect transversely except for two entangled saddle tangencies.
	\item If $\wh F$ is the component of $P_s\cap \Sigma\times [\psi-\epsilon,\psi+\epsilon]$ that contains the saddle tangencies (for $\epsilon>0$ small), then every component of $\Sigma_{\psi\pm\epsilon} \cap \wh F$ is non-trivial in $\Sigma_{\psi\pm\epsilon}$.
	\item $P_s$ has at most $m=-\chi(P_s)-2$ essential saddle tangencies to distinct pages in $\Sigma\times ([0,\psi-\epsilon]\cup [\psi+\epsilon, 2\pi])$.
\end{enumerate}
By rotating the pages of $K$ and reparametrizing $\theta$, if necessary, we can assume that $\Sigma_0$ and $P_s$ intersect transversely, and $\Sigma\times (\psi,2\pi)$ contains no tangencies of $P_s$.

Now let $S$ be the preimage of $P_s$ under the quotient map $q: \Sigma\times [0,2\pi]\to X_K$, which maps $\Sigma\times\{\theta\}$ to $\Sigma_\theta$ in the natural way. For simplicity, we will not distinguish $\Sigma\times\{\theta\}$ and $\Sigma_\theta$. Let $t_0=0<t_1<\ldots<t_m<\psi<t_{m+1}=2\pi$ be angles such that, for $i=1,\ldots,m-1$, $\Sigma_{t_i}$ and $P_s$ are transverse, and $\Sigma\times [t_i,t_{i+1}]$ contains a single essential saddle tangency of $P_s$. Furthermore, for $i=0,1,\ldots,m+1$, fix simple closed curves $\alpha_i\subset \Sigma_{t_i}\cap P_s$ that are essential in $\Sigma_{t_i}$, while ensuring $\phi(\alpha_0)=\alpha_{m+1}$. The following claims will complete the proof. Recall that $m=-\chi(P_s)-2$ in the statements. \vspace{2mm}

\noindent\textbf{Claim.} For $i=0,\ldots,m-1$, we have $d_\C(\alpha_i,\alpha_{i+1})\le 1$. \vspace{1mm}

\noindent\textit{Proof.} This is basically because there exists a single essential saddle tangency of $P_s$ in $\Sigma\times[t_i,t_{i+1}]$. If one of the curves, say $\alpha_i$, does not interact with the saddle tangency, then $F\cap \Sigma_{t_{i+1}}$ contains a curve that is isotopic to $\alpha_i$. Therefore, either $\alpha_i=\alpha_{i+1}$ or they are disjoint, and we get $d_\C(\alpha_i,\alpha_{i+1})\le 1$. So, we can assume that both curves interact with the essential saddle tangency of $P_s$ in $\Sigma\times[t_i,t_{i+1}]$. In this case, the saddle tangency guides an isotopy of $\alpha_i$ into $\Sigma_{t_{i+1}}$ such that $\alpha_i$ and $\alpha_{i+1}$ are disjoint, and we get $d_\C(\alpha_i,\alpha_{i+1})\le 1$.\qed\vspace{2mm}
	
\noindent\textbf{Claim.}  We have $d_\C(\alpha_{m},\alpha_{m+1})\le 2$. \vspace{1mm}

\noindent\textit{Proof.} Similar to the previous claim, we can assume that $\alpha_{m}$ and $\alpha_{m+1}$ interact with the entangled saddle of $P_s$ to $\Sigma_{\psi}$. Otherwise, we similarly get $d_\C(\alpha_{m},\alpha_{m+1})\le 1$. Let $G$ be the singular component of $\Sigma_\psi\cap P_s$, so $G$ is a graph embedded in $\Sigma_\psi$ with two vertices of valence 4. Since $g(\Sigma_{\psi})\ge 2$, we deduce that $G$ does not fill $\Sigma_\psi$. On the other hand, $\alpha_{m}$ and $\alpha_{m+1}$ have isotopic copies that lie in a neighborhood $N(G)\subset \Sigma_\psi$. Since $G$ does not fill $\Sigma_\psi$, there exists an essential curve $\beta$ outside $N(G)$. Therefore, $\beta$ is disjoint from the isotopic copies of $\alpha_{m}$ and $\alpha_{m+1}$ in $\Sigma_{\psi}$, and we obtain $d_\C(\alpha_{m},\alpha_{m+1})\le 2$.\qed\v

Finally, it follows from the last two claims that
$$d_{\C}(\phi)\le d_{\C}(\alpha_0,\phi(\alpha_0))= d_{\C}(\alpha_0,\alpha_{m+1})\le \sum^{m}_{i=0}d_\C(\alpha_{i},\alpha_{i+1})\le m+2=-\chi(P_s)=2g-2,$$
as desired. \qed


\begin{thebibliography}{1}
\bibitem{Bachman} D. Bachman, \emph{Topological index theory for surfaces in 3-manifolds}, Geom. Topol. 14 (2010), no. 1, 585-609.
\bibitem{BachmanSchleimer} D. Bachman, S. Schleimer, \emph{Surface bundles versus Heegaard splittings}, Comm. Anal. Geom. 13 (2005), 903-928.
\bibitem{CassonGordon} A. J. Casson, C. McA. Gordon, \emph{Reducing Heegaard splittings}, Topol. Appl. 27 (1987), 275-283.
\bibitem{Cerf} J. Cerf. \emph{Sur les diffeomorphismes de la sphere de dimension trois} ($\Gamma_4 =0$), Lecture Notes in Math, no. 53, Springer-Verlag, Berlin and New York, 1968.  
\bibitem{Gabai} D. Gabai, \emph{Foliations and the topology of 3-manifolds III}, J. Differential Geom. 26 (1987), 479-536.
\bibitem{GordonLuecke} C. McA. Gordon, J. Luecke, \emph{Knots are determined by their complements}, J. Amer. Math. Soc. 2 (1989), 371-415.
\bibitem{Haken} W. Haken, \emph{Some results on surfaces in 3-manifolds}, In MAA Studies in Mathematics, vol 5, The Mathematical Association of America, 1968.
\bibitem{Hatcher} A. Hatcher, \emph{Notes on basic 3-manifold topology}, available  at http://www.math. cornell.edu/ ~hatcher/.
\bibitem{Johnson} J. Johnson, \emph{Heegaard splittings and open books}, preprint, arXiv:1110.2142.
\bibitem{JohnsonNotes} J. Johnson, \emph{Notes on Heegaard splittings}, preprint.
\bibitem{Li} T. Li, \emph{Saddle tangencies and distance of Heegaard splittings}, Alg. Geom. Topol. 7 (2007), 1119-1134.
\bibitem{Rolfsen} D. Rolfsen, \emph{Knots and Links}, Mathematics Lecture Series, no. 7, Publish or Perish Inc., California, 1976.
\bibitem{ScharlemannHS} M. Scharlemann, \emph{Heegaard splittings of compact 3-manifolds}, Handbook of Geometric
Topology, Elsevier (2002), 921-953.
\bibitem{Scharlemann} M. Scharlemann, \emph{Local detection of strongly irreducible Heegaard splittings}, Topology Appl. 90 (1998), 135-147.
\bibitem{ScharlemannTomova} M. Scharlemann, M. Tomova, \emph{Alternate Heegaard genus bounds distance}, Geom. Topol. 10 (2006), 593-617.
\bibitem{Schleimer} S. Schleimer, \emph{private communication}.
\bibitem{Thomp} A. Thompson, \emph{Dehn surgery along complicated knots in the three-sphere}, preprint, arXiv: 1604.04902.
\bibitem{Thurs} W. P. Thurston, \emph{A norm for the homology of 3-manifolds}, Mem. Amer. Math. Soc. 59 (1986), no. 339, i-vi and 99-130.
\bibitem{Wald} F. Waldhausen, \emph{Heegaard-Zerlegungen der 3-Sph\"are}, Topology 7 (1968), 195-203.
\bibitem {FW} F. Waldhausen, \emph{On irreducible 3-manifolds which are sufficiently large}, Ann. of Math. 87 (1968), 56-88.
\end{thebibliography}
\end{document}